\documentclass{amsart}

\usepackage{amssymb,mathrsfs,amsmath,amsthm,color,bm,mathtools,bbm,wasysym,cases,mathdots}
\usepackage{xcolor}
\usepackage[normalem]{ulem}
\usepackage[pagebackref]{hyperref}
\usepackage[noadjust]{cite}

\usepackage{enumerate}
\usepackage[centering]{geometry}
\newtheorem{theorem}{Theorem}[section]
\newtheorem{lemma}{Lemma}[section]
\newtheorem{proposition}{Proposition}[section]

\theoremstyle{definition}
\newtheorem{definition}{Definition}
\newtheorem{remark}{Remark}

\numberwithin{equation}{section}

\newcommand{\R}{\mathbb R}
\newcommand{\Q}{\mathbb Q}
\newcommand{\Z}{\mathbb Z}
\newcommand{\N}{\mathbb N}
\newcommand{\T}{\mathbb T}
\newcommand{\Mat}{\mathrm{M}}
\newcommand{\Exact}{\operatorname{Exact}}

\DeclareMathOperator{\hdim}{\dim_H}

\newcommand{\qaq}{\mathrm{\quad and\quad}}
\newcommand{\dif}{{\, \mathrm d}}

\newcommand{\cu}{\mathcal U}

\newcommand{\ba}{\mathbf{a}}
\newcommand{\bb}{\mathbf{b}}
\newcommand{\bk}{\mathbf{k}}

\newcommand{\br}{\mathbf{r}}

\newcommand{\bx}{\mathbf{x}}
\newcommand{\bp}{\mathbf{p}}

\newcommand{\bu}{\mathbf{u}}
\newcommand{\bv}{\mathbf{v}}

\newcommand{\bta}{{\bm\theta}}

\newcommand{\ve}{\varepsilon}

\newcommand{\bw}{\mathbf{w}}

\begin{document}
\title[Exactly approximable matrices]
{Hausdorff dimension for exactly approximable matrices}
	\author[Y. He]{Yubin He}
	\address[Y. He]{Department of Mathematics, Shantou University, Shantou, Guangdong, 515063, China}
	\email{ybhe@stu.edu.cn}

\author[B. Li]{Bixuan Li}
\address[B. Li]{Institute of Mathematics, Academy of Mathematics and System Science, CAS, Zhongguancun East Road 55, Beijing 100190, P. R. China}
	\email{libixuan@amss.ac.cn}

\author[B. Wang]{Baowei Wang}
\address[B. Wang]{School of Mathematics and Statistics, Huazhong University of Science and Technology, Wuhan 430074, P. R. China}
	\email{bwei\_wang@hust.edu.cn}

\subjclass[2020]{11J83, 28A78}

\keywords{Diophantine approximation, exact approximation, inhomogeneous approximation, Hausdorff dimension}

\begin{abstract}
We establish the Hausdorff dimension for exactly approximable matrices for all dimensions, including the inhomogeneous case. More precisely, let $\psi:\N\to(0,\infty)$ be a non-increasing approximation function and let $\tau_\psi$ denote its lower order.  For $m,n\ge1$ and $\boldsymbol\theta\in\mathbb R^m$, denote $\Exact_{m,n}(\psi,\boldsymbol\theta)$ the set of $m\times n$ matrices that are $\psi$-approximable with shift $\boldsymbol\theta$, but are not $c\psi$-approximable for any
$0<c<1$. It is shown that
\[
\hdim \Exact_{m,n}(\psi,\boldsymbol\theta)=m(n-1)+\frac{m+n}{1+\tau_\psi},\ \ \ {\text{for}}\ \tau_\psi>\frac{n}{m}.
\]
We adopt a probabilistic viewpoint, i.e. by an averaged counting argument to quantify the distribution of rational vectors and subspaces
$$\left\{\frac{\mathbf p+\boldsymbol\theta}{q}: (\mathbf p, q)\in \mathbb{Z}^{m+1}\right\},\ \ \
\bigg\{\big\{A\in \Mat_{m, n}(\R): A\mathbf q-\mathbf p-\boldsymbol\theta=0\big\}: (\mathbf p, \mathbf q)\in \mathbb{Z}^{m+n}\bigg\}.$$
\end{abstract}

	\maketitle

\section{Introduction}

Aimed at the distribution of rational numbers and their extensions, Metric Diophantine approximation starts from  studying the measure and
Hausdorff dimension of the $\psi$-well approximable set
$$W(\psi)=\Big\{x\in [0,1]: |x-p/q|<\psi(q), \ {\text{for infinitely many}}\ (p,q)\in \N\times \mathbb{Z}\Big\}$$
by Khintchine \cite{Khintchine} and Jarn\'{i}k \cite{Jar1}.

This foundational framework was subsequently extended to the simultaneous case and even the system of linear forms. We work in the latter
setting and allow an arbitrary inhomogeneous shift.

Fix some notation at first. For $\mathbf x=(x_1,\dots,x_d)\in\R^d$, write
\[
|\mathbf x|:=\max_{1\le i\le d}|x_i|,\ \ \ \|\mathbf x\|:=\min_{\mathbf k\in\Z^d}|\mathbf x-\mathbf k|\]
for the sup norm of $\mathbf{x}$ and the distance of $\mathbf x$ to the integer vectors respectively. Denote by $\Mat_{m,n}(\R)$ the set of real $m\times n$ matrices.

Throughout, let $\psi:\N\rightarrow(0,\infty)$ be a non-increasing function, and fix
$\boldsymbol\theta\in\R^m$. The set of $\psi$-well approximable matrices is defined as
\[W_{m,n}(\psi,\boldsymbol\theta):=\Big\{A\in\Mat_{m,n}(\R):\|A\mathbf q-\boldsymbol\theta\|<\psi(|\mathbf q|)\text{ for infinitely many }
\mathbf q\in\Z^n\setminus\{\mathbf0\}
\Big\}.\]
The metric theory of $W_{m,n}(\psi,\boldsymbol{\theta})$ has been well established in the literature. Khintchine--Groshev theorem shows that its Lebesgue measure satisfies a zero--full law: null or full according as
\[
\sum_{q=1}^{\infty}q^{n-1}\psi(q)^m<\infty, \ {\text{or}}\ =\infty.
\]
The homogeneous case $\boldsymbol\theta=0$ was shown in Khintchine~\cite{Khin2} and Groshev~\cite{Gro} and the
inhomogeneous case was shown in Sprind\v{z}uk~\cite{Spr}. For its Hausdorff dimension, Dodson (homogeneous case \cite{Dodson}) and Levesley (inhomogeneous case \cite{Levesley}) proved that
\begin{equation*}
    \dim_H W_{m,n}(\psi,\boldsymbol{\theta})= m(n-1)+\frac{m+n}{1+\tau_\psi}, \qquad \tau_\psi:=\liminf_{q\to\infty}\frac{-\log\psi(q)}{\log q}>\frac{n}{m}.
\end{equation*}
For Hausdorff measure results, see Dickinson and Velani~\cite{DicV} for the homogeneous case and Bugeaud~\cite{Bug} for the inhomogeneous case.
Further results concerning the removal of monotonicity of $\psi$ can be found in Beresnevich and Velani~\cite{BV10}, Allen and Ram\'irez~\cite{AlR1,AlR2},
Hauke~\cite{Hauke25}, Kim~\cite{Kim25}, and Allen, Hauke and
Ram\'irez~\cite{AHR26}. For a general framework for the metric theory of limsup sets, see Beresnevich, Dickinson and Velani~\cite{Ber}.

The central object of this paper is the Hausdorff dimension of the set of exactly approximable matrices. The study of exact approximation goes back to Jarn\'ik~\cite{Jarnik} in 1931.

\begin{definition}
A matrix $A\in M_{m,n}(\mathbb{R})$ is called \emph{exactly $(\psi,\boldsymbol{\theta})$-approximable} if
\begin{itemize}
\item there are infinitely many pairs $(\mathbf{p},\mathbf{q})\in\mathbb{Z}^m\times (\mathbb{Z}^n\setminus\{\mathbf{0}\})$ satisfying
\[
|A\mathbf{q}-\mathbf{p}-\boldsymbol{\theta}|<\psi(|\mathbf{q}|);
\]

\item for every $c\in(0,1)$, there are only finitely many such pairs satisfying
\[
|A\mathbf{q}-\mathbf{p}-\boldsymbol{\theta}|< c\psi(|\mathbf{q}|).
\]
\end{itemize}
We denote the set of these matrices by $\operatorname{Exact}_{m,n}(\psi,\boldsymbol{\theta})$. Thus,
\[
\operatorname{Exact}_{m,n}(\psi,\boldsymbol{\theta})=W_{m,n}(\psi,\boldsymbol{\theta})\setminus\bigcup_{0<c<1}W_{m,n}(c\psi,\boldsymbol{\theta}).
\]
\end{definition}

By definition, an exactly $(\psi,\boldsymbol{\theta})$-approximable matrix must satisfy two competing requirements: it must lie in the prescribed neighborhoods of infinitely many resonant sets: affine subspaces
\[
A\mathbf{q}-\mathbf{p}-\boldsymbol{\theta}=\mathbf{0};
\]
while, for every fixed $c\in(0,1)$, it must avoid all but finitely many of the corresponding smaller neighborhoods. Compared with the $\psi$-well approximable set $W_{m,n}(\psi,\boldsymbol\theta)$, understanding $\operatorname{Exact}_{m,n}(\psi,\boldsymbol{\theta})$ requires information not only on the density of the resonant sets but also their local separation.

The density property of the resonant sets can be obtained through Dirichlet's theorem: for every $A\in M_{m,n}(\mathbb{R})$, there are infinitely many pairs $(\mathbf{p},\mathbf{q})\in\mathbb{Z}^m\times(\mathbb{Z}^n\setminus\{\mathbf{0}\})$ such that
\[
|A\mathbf{q}-\mathbf{p}|<|\mathbf{q}|^{-n/m}.
\]
This is the essential ingredient for the ubiquity argument in establishing the dimension of $W_{m,n}(\psi,\bold{0})$.

As far as the separation condition is concerned, one has some separation information on the rational numbers ($m=n=1$) and rational vectors \cite{KLWZ} (by simplex lemma $n=1, m\ge 1$). However, for $n\ge 2$ much less is known and no information for the inhomogeneous case even $m=n=1$.

To the best of our knowledge, currently a complete Hausdorff dimension formula for exactly approximation was known only for the case $n=1$ with $\boldsymbol{\theta}=\mathbf{0}$ as cited below.

Under suitable decay assumptions on $\psi$, Jarn\'ik~\cite{Jarnik} showed that $\operatorname{Exact}_{1,1}(\psi,0)$ is non-empty. Bugeaud~\cite{Bug1,Bu2} developed a systematic theory of $\operatorname{Exact}_{1,1}(\psi,0)$ including its Hausdorff dimension. These results were further developed by Bugeaud and Moreira~\cite{BugM}. Tan, Tian and Zhou~\cite{TTZ} and Tan, Tian, Wang and Wu \cite{TTWW} obtained Hausdorff measure for $\operatorname{Exact}_{1,1}(\psi,0)$. Bandi, Ghosh and Nandi~\cite{BanGN} introduced an abstract framework for exact approximation by well-distributed systems in regular metric spaces, which includes the classical one-dimensional setting.

For high dimensional case $m\ge 1, n=1$ and $\boldsymbol{\theta}=\mathbf{0}$, Fregoli~\cite{Fregoli} obtained the dimension for
$m\ge3$ under some restrictions on $\tau_\psi$, for $m=2$, see Tan, Tian and Zhou~\cite{TTZ}. This additional restriction on $\tau_{\psi}$ was later removed by Bandi and de Saxc\'e \cite{BanS} using the theory of parametric geometry of numbers so completing the dimension of $\Exact_{m,1}(\psi,\mathbf0)$:
\[
\hdim\operatorname{Exact}_{m,1}(\psi,\mathbf{0})=\frac{m+1}{1+\tau_\psi}, \  \ {\text{for}}\ \tau_{\psi}>\frac{1}{m}.
\]
The corresponding weighted setting was later established by Bandi and Fregoli \cite{BanF}.

Nothing was known on the Hausdorff dimension of $\operatorname{Exact}_{m,n}(\psi,\mathbf{0})$ for $n\ge 2$ and $\operatorname{Exact}_{m,n}(\psi,\boldsymbol{\theta})$ for all $m\ge 1, n\ge 1$ with $\boldsymbol{\theta}\ne \mathbf 0$. There are obvious differences between the case $n=1$ and the case $n\ge2$ or with arbitrary inhomogeneous shifts.

\begin{enumerate}
\item \emph{Separation of shifted rationals.} When $m=n=1$ and $\theta=0$, distinct rational points satisfy
\[
\left|\frac{p_1}{q_1}-\frac{p_2}{q_2}\right|\ge\frac1{q_1q_2}.
\]
However when $\theta\ne 0$, the shifted rationals can be very close
\[
\left|\frac{p_1+\theta}{q_1}-\frac{p_2+\theta}{q_2}\right|=\frac{|q_2p_1-q_1p_2+(q_2-q_1)\theta|}{q_1q_2}.
\]
So even for this simplest case, one need find new ideas to study it.

    \item
\emph{The geometry of higher-dimensional resonant sets.}
When $n\ge 2$, the resonant sets are hyperplanes
\[
A\mathbf{q}-\mathbf{p}-\boldsymbol{\theta}=\mathbf{0}.
\]
The dangerous sets to be avoided are therefore neighborhoods of affine subspaces, while the separation properties of these subspaces are much unclear.
\end{enumerate}

In this paper, we introduce an average counting argument that quantifies the abundance of well-separated resonant sets. 
In contrast, Bandi and de Saxc\'e~\cite{BanS} rely on a finer analysis on distribution of rational vectors through the Dani correspondence and parametric 
geometry of numbers. The averaged approach allows us to treat the linear forms in all dimensions. We give some rough words on the ideas.

\begin{remark}{\em Case $n=1$.
Define the exact cube and dangerous cube respectively as
 $$
 A(\bold{p}, q):=\frac{\bold{p}+\boldsymbol{\theta}}{q}+\left[\eta\frac{\psi(q)}{q}, \frac{\psi(q)}{q}\right]^m, \ \ \ \ D(\bold{r},s):=B\left(\frac{\bold{r}+\boldsymbol{\theta}}{s}, \eta\frac{\psi(s)}{s}\right).
$$
Theorem \ref{t:main} for $n=1$ case is proved by elementary estimates. A key ingredient we find is
that once the dangerous cubes of all high levels takes only a small proportion inside an exact cube $A(\bold{p}, q)$, called good cube,
then there are sufficiently many good sub-exact cubes inside $A(\bold{p}, q)$ and disjoint from all dangerous cubes in previous levels. This is
achieved by using an average counting argument.

Case $n\ge 2$, we work on fibers $U\in \Mat_{m,n-1}(\mathbb{T})$, which reduces the question similar to the inhomogeneous $n=1$ case.
By integrating on fibers and the similar average counting argument, we show that on most fibers, many exact cubes are good. This shows some inherence property of inhomogeneous Diophantine in dimension for $n\ge 2$ from $n=1$. (There are also inherence property in measure theory as observed by Allen and Ram\'irez~\cite{AlR1} before.)

This average counting method may also be helpful for other questions, especially when the resonant sets have arithmetic properties, but their geometry structure is not clear enough.
}\end{remark}

The main result of this paper is that we determine the Hausdorff dimension of exactly approximable matrices for every $m,n\ge1$ and every shift $\boldsymbol{\theta}\in\mathbb{R}^m$.

\begin{theorem}\label{t:main}
Let $m,n\ge1$, $\boldsymbol{\theta}\in\mathbb{R}^m$, and let $\psi:\mathbb{N}\to(0,\infty)$ be non-increasing.
Then,
\[
\dim_H\operatorname{Exact}_{m,n}(\psi,\boldsymbol{\theta})= m(n-1)+\frac{m+n}{1+\tau_\psi},\ \ \ {\text{for}} \ \tau_\psi>\frac{n}{m}.
\]
\end{theorem}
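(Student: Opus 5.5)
The upper bound is immediate: since $\Exact_{m,n}(\psi,\bta)\subset W_{m,n}(\psi,\bta)$, the Dodson--Levesley formula gives $\hdim\Exact_{m,n}(\psi,\bta)\le m(n-1)+\frac{m+n}{1+\tau_\psi}$. Also, by $\Z^{mn}$-periodicity, it suffices to work in a fundamental domain. The real content is the lower bound, and I plan to prove it by building a Cantor subset of $\Exact_{m,n}(\psi,\bta)$ and applying the mass distribution principle.

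\textbf{Reduction to fibers.} Write $A=(\bx\,|\,U)$, where $\bx\in\R^m$ is the first column and $U\in\Mat_{m,n-1}(\T)$. For a fixed $U$, restrict to vectors $\bq=(q_1,\bq')$ with $q_1=|\bq|$. The condition $|A\bq-\bp-\bta|<\psi(|\bq|)$ then reads
\[
\Big|\bx-\frac{\bp+\bta-U\bq'}{q_1}\Big|<\frac{\psi(q_1)}{q_1}.
\]
This is an inhomogeneous $n=1$ problem in $\bx$, with a shift $\bta-U\bq'$ that varies with $\bq'$. The target is to show that for a set of $U$ of positive $m(n-1)$-dimensional measure, the fiber set $\{\bx:(\bx|U)\in\Exact\}$ has dimension at least $\frac{m+n}{1+\tau_\psi}-?$. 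A Marstrand-type slicing inequality would then give the full bound. More precisely, the fiber should carry $\asymp Q^{n-1}\cdot Q^{m}$ candidate centres at scale $Q$ (one family for each $\bq'$ with $|\bq'|\le Q$). This is the analogue of ubiquity with ratio $Q^{-(m+n)/m}$, and it produces fiber dimension $\frac{m+n}{1+\tau_\psi}$ by the standard Jarník-type computation along a sparse sequence $Q_k$ with $\psi(Q_k)\approx Q_k^{-\tau_\psi}$.

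\textbf{Cantor construction.} Fix $\eta\in(0,1)$ close to $1$, and choose the scales $Q_k$ growing super-fast. At level $k$, select exact cubes
\[
A(\bp,\bq)=\frac{\bp+\bta-U\bq'}{q_1}+\Big[\eta\frac{\psi(q_1)}{q_1},\frac{\psi(q_1)}{q_1}\Big]^m ,
\]
which forces $\eta\psi\le|A\bq-\bp-\bta|<\psi$. Keep only the \emph{good} ones, meaning cubes that meet the dangerous balls $D(\br,\bs)$ of radius $\eta\psi(|\bs|)/|\bs|$ in only a small proportion, for every $|\bs|$ between $Q_{k-1}$ and $Q_{k+1}$. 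Inside each good cube, keep the good sub-cubes of level $k+1$ that avoid the dangerous sets. A point in the intersection then lies in infinitely many $\psi$-neighbourhoods but eventually avoids every $c\psi$-neighbourhood with $c<\eta$. Letting $\eta\to1$ along levels (taking $\eta_k\uparrow1$) makes the point exact. Dangerous pairs with $|\bs|$ far from every $Q_k$ have to be handled too. For $|\bs|$ much larger than $Q_k$, note that $c\psi(|\bs|)$ neighbourhoods are tiny, and their total measure inside a level-$k$ cube is $\sum_{|\bs|>Q}|\bs|^{n-1}\psi(|\bs|)^m\cdot\text{vol}$, which is small relative to the cube because $\tau_\psi>n/m$ makes this series converge with a tail $o(1)$. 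For $|\bs|$ smaller than $Q_k$, the avoidance is inherited from the previous levels.

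\textbf{Main obstacle: averaged counting.} The key difficulty is showing that most candidate cubes are good, given that there is no separation lemma for shifted rationals or hyperplanes. I would first compute, for each pair of levels, the average over $U$ (and over the candidate centres) of the measure of $A(\bp,\bq)\cap\bigcup D(\br,\bs)$. This amounts to counting pairs $(\bq,\bs)$ with $|\bq|\asymp Q$, $|\bs|\le S$ for which the two affine conditions are nearly simultaneously satisfied. Integrating over $U\in\T^{m(n-1)}$, and over the $\bx$-translate, turns such coincidences into volume estimates of the form $\#\{\bs\}\cdot\psi(|\bs|)^m/|\bs|^m$ times the cube volume. The exception is the degenerate case where $\bs$ is a rational multiple of $\bq$, and that case I would treat separately by using monotonicity of $\psi$ and the sign/size of the ratio. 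Markov's inequality then shows that for most $U$ a proportion $\ge 1/2$ of the cubes are good. I would then verify that the resulting counts of good sub-cubes per parent cube give the local mass bounds needed for the mass distribution principle. I expect this averaged counting, especially controlling the near-degenerate pairs $(\bq,\bs)$ uniformly, to be the hardest step.
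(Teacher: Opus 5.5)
\textbf{Main gap: the case $n=1$.} Your only device for making near-coincidences of resonant sets behave like independent events is integration over $U\in\T^{m(n-1)}$, with pairs $\mathbf s\parallel\mathbf q$ set aside as degenerate. Integrating over $\bx$ merely computes the overlap you need to bound. For $n=1$ there is no $U$, and \emph{every} pair of denominators $(q,s)$ is parallel. So your proposal does not treat $\Exact_{m,1}(\psi,\bta)$ with $\bta\ne\mathbf 0$ at all, and this is where the paper has to do genuine arithmetic.

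The paper takes $q$ prime in $(M/2,M]$, so that for $(q,s)=1$ the integer vector $\mathbf k$ with $|\mathbf k+(s-q)\bta|<qs\delta$ determines $(\bp\bmod q,\br\bmod s)$ (Lemma~\ref{l:1-resonant-1}). It then counts such $\mathbf k$. For $m\ge2$ the resulting ``$+1$'' terms cost $O(M^2/\log M)=o(M^{m+1}/\log M)$. For $m=1$ they are of the same order as the total number $\asymp M^2/\log M$ of candidate intervals, and this is not slack in the estimate. If $\theta=b/D$, then for every $s\equiv q\pmod D$ coprime to $q$ there is an exact coincidence $R_{p,q}=R_{r,s}$, so a positive proportion of all pairs $(q,s)$ collide. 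The argument survives only because:
\begin{itemize}
\item at intermediate heights all these collisions fall on the single class $p\equiv-bD^{-1}\pmod q$ (Lemma~\ref{l:1-resonant-2});
\item for $s>M$ a coincidence is harmless, since the dangerous ball is then smaller than the gap $\eta\psi(q)/q$;
\item for irrational $\theta$ the refined Denjoy--Koksma bound $\#\{u\le h<u+K:\|h\theta\|\le t\}\le 4Kt+4K/Q_\theta(K)$ (Lemma~\ref{lem2.3}) controls how many $s$ have $\|(s-q)\theta\|$ small, both for avoidance and in the goodness estimate for $M<s\le M^{\tau_\psi-2\varepsilon}$.
\end{itemize}
None of this input appears in your plan.

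\textbf{Missing or misstated points for $n\ge2$.} Here your outline follows the paper's route (fibres, good cubes, Haar averaging over $U$, Markov, slicing), but several necessary points need fixing.
\begin{enumerate}
\item[(i)] A global proportion $\ge1/2$ of good cubes gives nothing per parent, since bad cubes may concentrate in a few parents. You need a \emph{global} bad count at most $cL_{k-1}^m$ times the total, i.e.\ below the number of candidates inside a single parent. This must hold for $U$ outside an exceptional set that depends only on the side length $L_{k-1}$ (so one $U$ serves all parents) and has summable measure in $k$. Averaging gives this only for heights $|\mathbf s|\ge Q_k^{a}$, where the tail $\sum_{h\ge Q_k^a}h^{n-1}\psi(h)^m\to0$. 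Heights between $Q_{k-1}$ and $Q_k^a$ must be removed deterministically inside each parent using its goodness, because their averaged contribution $\asymp\sum_{h>Q_{k-1}}h^{n-1}\psi(h)^m$ does not shrink as $Q_k\to\infty$.
\item[(ii)] The mass distribution step needs the children to be $M^{-\rho}$-separated, with $\rho$ slightly above $1+n/m$. The paper obtains this by discarding resonant points that are $M^{-\rho}$-close to another one, via a further averaged count. Lattice counting alone only yields a fibre exponent of about $m/(1+\tau_\psi)$.
\item[(iii)] For $\mathbf s=(s_1,\mathbf s')$ the fibre dangerous ball has radius $\eta\psi(|\mathbf s|)/|s_1|$, not $\eta\psi(|\mathbf s|)/|\mathbf s|$. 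Vectors with $s_1=0$ give conditions independent of $\bx$; these must be excluded for almost every $U$ by Borel--Cantelli (Lemma~\ref{l:2-horizontal}) together with Egorov's theorem.
\item[(iv)] For $\mathbf s=k\mathbf q$ with $\bta\ne\mathbf 0$, the obstruction is $\|k\mathbf t+(k-1)\bta\|<\eta\psi(|k\mathbf q|)$, where $\mathbf t=A\mathbf q-\bp-\bta$. Monotonicity does not exclude this when $\|(k-1)\bta\|$ is small. The paper takes $\mathbf q$ primitive, proves emptiness only for $|k|\le K$, and bounds the terms with $|k|>K$ by measure. That tail bound requires the $Q_k$ to be regular scales in the sense of Lemma~\ref{lem2.1}, not merely $\psi(Q_k)\approx Q_k^{-\tau_\psi}$.
\end{enumerate}
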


For related results on exact power order, exact logarithmic order, badly approximable set, and high dimensional Lagrange spectrum, see G\"uting~\cite{Guting}, Beresnevich, Dickinson and Velani~\cite{BerDV}, He and Xiong \cite{HeXiong}, Koivusalo, Levesley, Ward and Zhang~\cite{KLWZ}, Schleischitz~\cite{Schl},
and Ward \cite{Ward}.
\subsection{Notation}

By the invariance of integer translations of the set  $\operatorname{Exact}_{m,n}(\psi,\boldsymbol{\theta})$, we focus the matrixes with entries in unit circle $\mathbb{T}$, i.e. $A\in \Mat_{m,n}(\T)$. Lebesgue measure on $\R^d$ and normalized Haar measure on $\T^d$ are both denoted by $\mathcal L_d$. For one dimensional integer, we use $q$; for high dimensional integer, we use the bold form $\mathbf q$.

For a bounded set $E$ in a metric space, we write $|E|$ for its diameter. For non-negative quantities $X$ and $Y$, we write $X\ll Y$ if $X\le CY$ for some constant $C>0$. If the implied constant is allowed to depend on a parameter $\eta$, we write $X\ll_\eta Y$. The notation $X=O(Y)$ and $X=O_\eta(Y)$ has the same meaning. We write $X\asymp Y$ if both $X\ll Y$ and $Y\ll X$ hold, and use $X\asymp_\eta Y$ when the implied constants may depend on $\eta$. Unless otherwise stated, all implied constants are independent of the main asymptotic parameters.

\subsection{Organization of the paper}

In Section~\ref{s:preliminaries}, we establish some auxiliary results
for later use, including a refined Denjoy--Koksma inequality.
In Section~\ref{s:1-case}-\ref{s:1-proof}, we treat $n=1$ case by analyzing the distribution of shifted rational vectors. In Section~\ref{s:2-case}-\ref{s:2-large}, we treat $n\ge2$.
We work on fibers of the matrix space which reduces the question similar to inhomogeneous $n=1$ case.

\section{Preliminaries}\label{s:preliminaries}

The first lemma shows that for a non-increasing function $\psi$, it can be reasonably large for infinitely many $M$, and for $t\ge M$, the value of $\psi(t)$ can be controlled by $\psi(M)$ with a fixed power of $M/t$.

\begin{lemma}[Regular scale]
\label{lem2.1}
Let $\psi$ be a non-increasing function. For any $0<\epsilon<\tau_{\psi}$, there are arbitrarily large integers $M$ such that
\[\psi(M)\ge M^{-(\tau_\psi+\varepsilon)},\ \ {\text{and}}\ \ \psi(t)\le\psi(M)\left(\frac Mt\right)^{\tau_\psi-\varepsilon}\text{ for any $t\ge M$}.\]
\end{lemma}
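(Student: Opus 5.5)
The plan is to reduce both requirements to one extremal choice for the auxiliary function
\[
g(t):=\psi(t)\,t^{\tau_\psi-\varepsilon},\qquad t\in\N .
\]
The second inequality in the statement, $\psi(t)\le\psi(M)(M/t)^{\tau_\psi-\varepsilon}$ for $t\ge M$, is exactly $g(t)\le g(M)$ for all $t\ge M$. So it suffices to find arbitrarily large $M$ at which $g$ attains its maximum over the tail $\{t\ge M\}$, and to arrange at the same time that $\psi(M)$ is not too small. I take $\tau_\psi<\infty$, since otherwise the bound $M^{-(\tau_\psi+\varepsilon)}$ has no content.

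\emph{Step 1: $g$ tends to $0$.} By definition of the lower order as a $\liminf$, for every $\delta>0$ we have $-\log\psi(t)/\log t\ge\tau_\psi-\delta$ for all large $t$. Taking $\delta=\varepsilon/2$ gives
\[
\psi(t)\le t^{-(\tau_\psi-\varepsilon/2)}\quad\text{for all large }t,
\]
and hence $g(t)\le t^{-\varepsilon/2}\to0$.

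\emph{Step 2: choice of the starting points $N$.} Again by the definition of $\liminf$, there are infinitely many $N$ with $-\log\psi(N)/\log N\le\tau_\psi+\varepsilon/2$, that is, with $\psi(N)\ge N^{-(\tau_\psi+\varepsilon/2)}$. Fix such an $N$, as large as we like.

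\emph{Step 3: choice of $M$.} Since $g>0$ and $g(t)\to0$, only finitely many $t\ge N$ satisfy $g(t)\ge g(N)$. Therefore $\sup_{t\ge N}g(t)$ is a maximum attained on a finite set, and we let $M\ge N$ be the largest integer at which it is attained. Then $g(t)\le g(M)$ for every $t\ge M$, which is the second claimed inequality. Moreover $M\ge N$ can be made arbitrarily large.

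\emph{Step 4: lower bound for $\psi(M)$.} Since $M$ maximises $g$ on $\{t\ge N\}$, we have $g(M)\ge g(N)$. By the choice of $N$ in Step 2,
\[
g(N)=\psi(N)\,N^{\tau_\psi-\varepsilon}\ge N^{-3\varepsilon/2}\ge M^{-3\varepsilon/2}.
\]
Dividing by $M^{\tau_\psi-\varepsilon}$ gives
\[
\psi(M)=g(M)\,M^{-(\tau_\psi-\varepsilon)}\ge M^{-3\varepsilon/2}\,M^{-(\tau_\psi-\varepsilon)}=M^{-(\tau_\psi+\varepsilon/2)}\ge M^{-(\tau_\psi+\varepsilon)}.
\]

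The only delicate point is that the maximiser $M$ may be much larger than $N$, so the information $\psi(N)\ge N^{-(\tau_\psi+\varepsilon/2)}$ does not transfer to $M$ directly. This is handled in Step 4 by the monotonicity of the weight $t\mapsto t^{-3\varepsilon/2}$, which lets $N^{-3\varepsilon/2}$ be bounded below by $M^{-3\varepsilon/2}$. The factor $\varepsilon/2$ of slack in Steps 1 and 2 is exactly what absorbs this loss.
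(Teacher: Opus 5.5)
Your proposal is correct and follows the paper's proof essentially step for step. The shared ingredients are the auxiliary function $t^{\tau_\psi-\varepsilon}\psi(t)$, the $\varepsilon/2$ slack giving both its decay to $0$ and the starting points $N$, a maximizer $M\ge N$ on the tail, and the chain $g(M)\ge g(N)\ge N^{-3\varepsilon/2}\ge M^{-3\varepsilon/2}$. Your choice of the largest maximizer and the explicit assumption $\tau_\psi<\infty$ are harmless refinements.
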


\begin{proof}
Recall the definition of $\tau_\psi$. So, on the one hand, for all sufficiently large $t$,
\[
-\frac{\log\psi(t)}{\log t}>\tau_\psi-\frac{\varepsilon}{2}, \ \ {\text{i.e.}}\ \ \psi(t)\le t^{-\tau_{\psi}+\varepsilon/2}.
\]
Then, it follows that
\[
t^{\tau_\psi-\varepsilon}\psi(t)\le t^{-\varepsilon/2}\rightarrow 0.
\]

On the other hand, there are arbitrarily large integers $N$ such that
\[
\psi(N)\ge N^{-(\tau_\psi+\varepsilon/2)}.
\]
Fix such an integer $N$.  Since $t^{\tau_\psi-\varepsilon}\psi(t)\rightarrow0$, this sequence attains its maximum on $\{N,N+1,\ldots\}$. Let $M\ge N$ be a maximizer.  Then, for every $t\ge M$,
\[t^{\tau_\psi-\varepsilon}\psi(t)\le M^{\tau_\psi-\varepsilon}\psi(M), \ {\text{and so}}\ \psi(t)\le \psi(M)\left(\frac Mt\right)^{\tau_\psi-\varepsilon}.
\]

By maximality of the choice of $M$, one has
\[
M^{\tau_\psi-\varepsilon}\psi(M)\ge N^{\tau_\psi-\varepsilon}\psi(N)\ge N^{-3\varepsilon/2}\ge M^{-3\varepsilon/2}.
\]
Thus,
\[
\psi(M)\ge M^{-(\tau_\psi+\varepsilon/2)}\ge M^{-(\tau_\psi+\varepsilon)}.
\]
Since $N$ can be chosen arbitrarily large, so can $M$.
\end{proof}

From now on, we call a real number $M$ a \emph{regular scale} if it satisfies the conclusion of Lemma~\ref{lem2.1}.

\subsection{Denjoy--Koksma inequality}
Let $x\in\R\setminus\Q$, and write $x=[a_0;a_1,a_2,\ldots]$
for its regular continued fraction expansion. Let $\{q_j\}_{j\ge 0}$ be the denominators of the convergents given by
\[
\frac{p_j}{q_j}=[a_0;a_1,\ldots,a_j],\ \ j\ge0.
\]

For the case of $m=1$ with $\theta\notin\Q$, the Denjoy--Koksma inequality will be used.

\begin{lemma}[{\cite[Chapter 2, Theorem 2.5]{KimK}}]\label{dk} Let $\theta\notin\Q$ and let $q_j$ be a denominator of the continued fraction convergent of $\theta$.  For any integers $u, K$ with $K\le q_j$,
and $0<t<1/2$,
\begin{equation*}
\#\big\{u\le h<u+K: \|h\theta\|<t\big\}\le 2(q_j t+1).
\end{equation*}
\end{lemma}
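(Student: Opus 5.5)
We would prove this by comparing the orbit segment $\{h\theta\}_{u\le h<u+K}$ with the rational rotation by $p_j/q_j$, where $p_j/q_j$ is the $j$-th convergent of $\theta$. Since $K\le q_j$ only shrinks the set being counted, it suffices to treat $K=q_j$.

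\emph{Step 1: reduce to a grid.} Write $h=u+i$ with $0\le i<q_j$ and set $\varepsilon:=\theta-p_j/q_j$. The standard convergent estimate gives $|\varepsilon|<1/(q_jq_{j+1})$. Then
\[
h\theta\equiv u\theta+\frac{ip_j}{q_j}+i\varepsilon \pmod 1,\qquad |i\varepsilon|\le\frac{q_j-1}{q_jq_{j+1}}<\frac1{q_j}.
\]
Because $\gcd(p_j,q_j)=1$, the residues $ip_j \bmod q_j$ for $0\le i<q_j$ run through $\{0,1,\dots,q_j-1\}$ exactly once. So the unperturbed points $u\theta+ip_j/q_j$ are exactly the shifted grid $u\theta+\frac1{q_j}\Z$ modulo $1$, each grid point used once.

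\emph{Step 2: one point per cell.} All perturbations $i\varepsilon$ have the same sign as $\varepsilon$ and modulus $<1/q_j$. Assume $\varepsilon\ge0$; the other case is symmetric, using cells of the form $(\cdot-1/q_j,\cdot\,]$. Then the point with index $i$ lies in the half-open cell
\[
\Big[u\theta+\tfrac{k}{q_j},\,u\theta+\tfrac{k+1}{q_j}\Big),\qquad k\equiv ip_j \pmod{q_j}.
\]
These $q_j$ cells partition the circle $\T$, and each cell contains exactly one point of the segment.

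\emph{Step 3: count.} The condition $\|h\theta\|<t$ means that $h\theta$ mod $1$ lies in the arc $(-t,t)$, which has length $2t<1$. An arc of length $2t$ meets at most $2tq_j+2$ consecutive cells of length $1/q_j$. Hence
\[
\#\{u\le h<u+K:\|h\theta\|<t\}\le 2q_jt+2=2(q_jt+1).
\]

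The only delicate point is Step 2. The cruder approach uses the best-approximation separation $\|k\theta\|\ge\|q_{j-1}\theta\|>1/(2q_j)$ for $0<k<q_j$, but that only gives the bound $4q_jt+1$. To get the stated constant one needs the exact bound $|i\varepsilon|<1/q_j$ together with the common sign of the perturbations, so that no two points fall in the same cell. This is where we would check the details carefully: the bound $|\varepsilon|<1/(q_jq_{j+1})$ and the boundary conventions for the half-open cells.
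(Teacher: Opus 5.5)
Your proof is correct. The paper gives no argument for this lemma; it simply quotes it from the cited reference, so the only comparison is with the standard textbook argument, and yours is essentially that argument.

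The steps all check out:
\begin{enumerate}
\item The map $i\mapsto ip_j \bmod q_j$ is a bijection because $\gcd(p_j,q_j)=1$.
\item The perturbations $i\varepsilon$, $0\le i<q_j$, all share the sign of $\varepsilon$ (nonzero since $\theta\notin\Q$) and have modulus $<1/q_j$.
\item Hence each half-open cell contains exactly one point. Your boundary conventions are right: the point $i=0$ sits at the closed endpoint of its cell, and for $\varepsilon<0$ the cells $(\cdot-1/q_j,\cdot\,]$ work symmetrically.
\item The reduction from $K\le q_j$ to $K=q_j$ is fine, since the counted set only grows.
\end{enumerate}

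In Step 3 the cleanest way to justify "at most $2tq_j+2$ cells" is to lift to $\R$. If the open arc $(-t,t)$ meets $N\ge 2$ consecutive lifted cells, the $N-2$ interior ones lie inside it. The two end cells meet it in sets of positive length, because an open interval meeting a half-open interval does so in a set with nonempty interior. Hence $(N-2)/q_j<2t$. This also covers the wrap-around case, since distinct cells on $\T$ come from distinct lifted cells. Then the number of points in the arc is at most the number of cells meeting it, giving $2(q_jt+1)$.

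You also do not need the full strength of $|\theta-p_j/q_j|<1/(q_jq_{j+1})$. Since $(q_j-1)/q_j^2<1/q_j$, the weaker bound $|\theta-p_j/q_j|<1/q_j^2$ already gives $|i\varepsilon|<1/q_j$. So your argument proves the inequality for every reduced fraction $p/q$ with $|q\theta-p|<1/q$, not only for convergent denominators.
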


We provide a revised version, since when $K$ is much smaller than $q_j$, the above estimation may not be effective. For an integer $K\ge 1$, define
\begin{equation*}
Q_\theta(K):=\max\{q_j:q_j\le K\}.
\end{equation*}

\begin{lemma}[General Denjoy--Koksma inequality]\label{lem2.3} Let $\theta\notin\Q$ and $0<t<1/2$. For any integers $K\ge 1$ and $u\ge 1$,
\[
\#\big\{u\le h<u+K: \|h\theta\|\le t\big\}\le 4Kt+\frac{4K}{Q_\theta(K)}.
\]
\end{lemma}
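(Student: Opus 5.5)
The plan is to chop the window $[u,u+K)$ into consecutive blocks of length $Q:=Q_\theta(K)$ and apply Lemma~\ref{dk} to each block with $q_j=Q$. Since $Q=q_j$ is a convergent denominator with $q_j\le K$, the hypothesis ``length $\le q_j$'' of Lemma~\ref{dk} holds for every block of length at most $Q$. This includes the final, possibly shorter, block.

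Concretely, write $K=\ell Q+r$ with $0\le r<Q$, so the number of blocks is $\lceil K/Q\rceil\le K/Q+1\le 2K/Q$, using $Q\le K$. Each block $[u+iQ,\,u+(i+1)Q)$, and the last block $[u+\ell Q,\,u+K)$, then contributes at most $2(Qt+1)$ integers $h$ with $\|h\theta\|<t$. Summing over the blocks gives
\[
\#\{u\le h<u+K:\|h\theta\|<t\}\le \frac{2K}{Q}\cdot 2(Qt+1)=4Kt+\frac{4K}{Q}.
\]
This is exactly the claimed bound.

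The only subtlety is that Lemma~\ref{dk} uses the strict inequality $\|h\theta\|<t$, whereas the statement uses $\le t$. I would handle this by applying Lemma~\ref{dk} with $t'$ slightly larger than $t$ and letting $t'\downarrow t$. This is valid as long as $t'<1/2$, which holds since $t<1/2$. The count is integer-valued and the bound is continuous in $t$, so the limit gives the non-strict version.

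I would also note that $Q_\theta(K)$ is well defined for every $K\ge1$, because $q_0=1\le K$. The block-count estimate $\lceil K/Q\rceil\le 2K/Q$ is the one place where the factor $4$ comes from, so I expect no real obstacle beyond this bookkeeping.
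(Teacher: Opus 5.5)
Your proposal is correct and matches the paper's proof: split $[u,u+K)$ into at most $\lceil K/Q_\theta(K)\rceil\le 2K/Q_\theta(K)$ blocks of length at most $Q_\theta(K)$, then apply Lemma~\ref{dk} on each block to get the bound $2(Q_\theta(K)t+1)$ per block. Your limiting argument $t'\downarrow t$, which passes from the strict inequality in Lemma~\ref{dk} to the non-strict one in the statement, is correct and covers a point the paper leaves implicit.
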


\begin{proof}
Since $Q_\theta(K)\le K$, we partition the interval $[u,u+K)$ into consecutive blocks of length at most $Q_\theta(K)$. The number of such blocks is at most
\[\left\lceil\frac{K}{Q_\theta(K)}\right\rceil\le\frac{K}{Q_\theta(K)}+1\le\frac{2K}{Q_\theta(K)}.
\]

On each block, we apply Lemma~\ref{dk} with $M=Q_\theta(K)$ which gives
\[
\#\Big\{u+iQ_\theta(K)\le h< u+(i+1)Q_\theta(K):\|h\theta\|\le t\Big\}\le 2\left(Q_\theta(K)t+1\right).
\]
Hence,
\begin{align*}
\#\Big\{u\le h<u+K: \|h\theta\|\le t\Big\}&\le\frac{2K}{Q_\theta(K)}\left(2Q_\theta(K)t+2\right)=4Kt+\frac{4K}{Q_\theta(K)}.
\end{align*}
This proves the result.
\end{proof}

\begin{remark}
Since $\theta\notin\Q$, the denominators of its continued fraction convergents tend to infinity. Consequently, $Q_\theta(K)\rightarrow\infty$ as $K\to\infty$, and therefore
\[
\frac{K}{Q_\theta(K)}=o(K).
\]
\end{remark}

\subsection{A Cantor structure and its dimension}
We give a framework of Cantor set and its dimension which caters for the applications in our setting.

The Cantor set is defined by a nested family of cubes.  Let
\[
\mathcal F_0=\{J_0\},
\]
where $J_0\subset\T^m$ is a cube.  For each $j\ge1$ and each $P\in\mathcal F_{j-1}$, let $\mathcal F_j(P)$ be a finite family of cubes contained in $P$. These are called the children of $P$, and $P$ is their parent. Define
\[
\mathcal F_j=\bigcup_{P\in\mathcal F_{j-1}}\mathcal F_j(P).
\]
All cubes in $\mathcal F_j$ have the same side length $L_j$ and $L_j\rightarrow0.$ Set
\[
K_j=\bigcup_{Q\in\mathcal F_j}Q, \ \ \ K=\bigcap_{j=0}^{\infty}K_j.
\]

We use the mass distribution principle \cite[Proposition 4.2]{Falconer} to estimate the dimension of $K$.
\begin{proposition}[\cite{Falconer}] Let $E$ be a Borel measurable set and $\mu$ a Borel probability measure supported on $E$. Assume that
\[
\mu(I)\ll |I|^s
\]
for all sufficiently small cubes $I$, then
\[
\hdim E\ge s.
\]\end{proposition}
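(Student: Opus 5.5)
The plan is to bound the $s$-dimensional Hausdorff measure of $E$ from below by a positive constant, using only the estimate $\mu(I)\ll |I|^s$ on small cubes. By hypothesis there are constants $C>0$ and $r_0>0$ such that $\mu(I)\le C|I|^s$ for every cube $I$ with $|I|<r_0$. Recall that $\mathcal H^s(E)=\lim_{\delta\to0}\mathcal H^s_\delta(E)$, where $\mathcal H^s_\delta(E)=\inf\sum_i|U_i|^s$ and the infimum runs over countable covers $\{U_i\}$ of $E$ with $|U_i|\le\delta$. It therefore suffices to show that $\sum_i|U_i|^s$ is bounded below by a constant $c>0$ independent of the cover, once $\delta$ is small enough.

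\textbf{Step 1: replace each set by a cube.} Fix $\delta$ small compared with $r_0$ and a $\delta$-cover $\{U_i\}$ of $E$. Discard any $U_i$ that misses $E$. For each remaining $U_i$ pick a point $x_i\in U_i$ and let $I_i$ be the closed cube centred at $x_i$ with side $2|U_i|$. In the sup norm, $U_i\subset I_i$ and $|I_i|\asymp|U_i|$, with a constant depending only on the ambient dimension (here $mn$, or $m$ on $\T^m$). If $|U_i|=0$, I replace $I_i$ by a cube of side $\varepsilon 2^{-i}$, which adds at most a small $\varepsilon$-dependent amount to the sum.

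\textbf{Step 2: apply the hypothesis and countable subadditivity.} Since $\delta$ is small, every $I_i$ has $|I_i|<r_0$, so $\mu(I_i)\le C|I_i|^s\ll|U_i|^s$. Because $\mu$ is a probability measure supported on $E$ and the $I_i$ cover $E$,
\[
1=\mu(E)\le\sum_i\mu(I_i)\ll\sum_i|U_i|^s .
\]
Hence $\mathcal H^s_\delta(E)\ge c>0$ for all small $\delta$. Letting $\delta\to0$ gives $\mathcal H^s(E)\ge c>0$, and therefore $\hdim E\ge s$.

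There is no serious obstacle. The only care needed is in Step 1: ensuring the comparison constant between $|I_i|$ and $|U_i|$ is uniform, and handling degenerate sets of diameter zero. Both are resolved by the choices above. Borel measurability of $E$ is used only so that $\mu(E)=1$ makes sense.
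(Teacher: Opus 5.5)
Your proof is correct, and it is the standard argument behind the mass distribution principle. The paper gives no proof of its own, only the citation to Falconer's Proposition 4.2, which assumes the bound for arbitrary sets of small diameter; your Step 1 (enlarging each covering set to a cube of comparable diameter and disposing of diameter-zero sets) is precisely the routine reduction from that version to the cube version stated here. The one case you leave unaddressed is $s\le 0$, where the conclusion is trivial because $E\ne\varnothing$, and the paper only applies the principle with $s>0$ anyway.
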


The dimension of $K$ depends on the separation condition and the number of the cubes in $\mathcal{F}_j(P)$. 
\begin{lemma}\label{l:cantor} Suppose that, for each $j\ge1$, every $P\in\mathcal F_{j-1}$ has exactly $b_j$ children, and write
\[
B_0=1, \ {\text{and}}\ \  B_j=\prod_{i=1}^j b_i, \ \ {\text{for all}}\ j\ge 1.
\]
Let $\{M_j\}$ be a positive sequence with $M_j\to\infty$. Assume that for some $\rho>1$ the following is true.
\begin{enumerate}
\item For each $P\in \mathcal{F}_{j-1}$, the children in $\mathcal F_j(P)$ are separated by a gap $\gg M_j^{-\rho}$.

\item For every cube $I\subset\T^m$ of side length $r<L_{j-1}$ and every $P\in\mathcal F_{j-1}$,
\begin{equation}\label{f2.1}
\#\left\{Q\in\mathcal F_j(P): Q\cap I\ne\varnothing\right\}\ll b_j\left(\frac{r+M_j^{-1}}{L_{j-1}}\right)^m.
\end{equation}
All of the implied constants in $\gg$, $\ll$ above are independent of $j$, $P$, $I$, and $r$.
	
\item For all sufficiently large $j$,
\begin{equation*}
L_j\le M_j^{-\rho} \le r_j:=\frac{b_j^{1/m}M_j^{-(1+\rho)}}{L_{j-1}}\le M_j^{-1}<L_{j-1}.
\end{equation*}
\end{enumerate}
Then,
\begin{equation}\label{dimbound}
\hdim K\ge \min\left\{m,\,\liminf_{j\to\infty}\frac{\log B_j}{-\log L_j},\, \liminf_{j\to\infty}\frac{\log B_{j-1}+m\log(L_{j-1}M_j)}{-\log r_j}\right\}.
\end{equation}
\end{lemma}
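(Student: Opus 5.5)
We will apply the mass distribution principle to the natural measure $\mu$ on $K$, defined by giving each cube of $\mathcal F_j$ mass $B_j^{-1}$. Since every $P\in\mathcal F_{j-1}$ has exactly $b_j$ children, this is consistent and extends to a Borel probability measure supported on $K$. Fix $s$ strictly smaller than the right-hand side of \eqref{dimbound}. Let $I$ be a small cube of side length $r$. We must show $\mu(I)\ll r^{s}$, with constant possibly depending on $s$ and on finitely many initial levels. Choose $j$ with $L_j\le r<L_{j-1}$. We then split into cases according to where $r$ sits relative to the intermediate scales $M_j^{-\rho}\le r_j\le M_j^{-1}$ provided by hypothesis (3).

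\emph{Case $L_j\le r< M_j^{-\rho}$.} The children of any $P\in\mathcal F_{j-1}$ are separated by gaps $\gg M_j^{-\rho}$ by (1), and cubes with different parents are separated at least as well, by induction on the previous levels. Hence $I$ meets $O(1)$ cubes of $\mathcal F_j$, and so $\mu(I)\ll B_j^{-1}$. Since $s<\liminf \log B_j/(-\log L_j)$, we have $B_j^{-1}\le L_j^{s}\le r^s$ for large $j$.

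\emph{Case $M_j^{-\rho}\le r<L_{j-1}$.} Here $I$ meets $O(1)$ parents $P\in\mathcal F_{j-1}$, by the separation at level $j-1$ together with $r<L_{j-1}$. Using \eqref{f2.1}, we get
\[
\mu(I)\ll B_{j-1}^{-1}\min\Big\{1,\Big(\frac{r+M_j^{-1}}{L_{j-1}}\Big)^m\Big\}.
\]
We treat this in two subranges.

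For $r\ge M_j^{-1}$, the bound is $\ll B_{j-1}^{-1}(r/L_{j-1})^m$. As a function of $r$ this has exponent $m\ge s$, so it suffices to check it at the endpoint $r=L_{j-1}$, where it equals $B_{j-1}^{-1}\le L_{j-1}^{s}$ by the previous-level estimate. Since $(r/L_{j-1})^m\le(r/L_{j-1})^s$, we get $\mu(I)\ll r^s$.

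For $M_j^{-\rho}\le r\le M_j^{-1}$, we have two bounds. Counting via \eqref{f2.1} gives $\mu(I)\ll B_{j-1}^{-1}(L_{j-1}M_j)^{-m}$, which is independent of $r$. The trivial count gives at most $b_j$ children per parent, and separation gives roughly $(r/M_j^{-\rho})^m$ of them inside $I$, so $\mu(I)\ll B_j^{-1}(rM_j^{\rho})^m$. The two bounds cross exactly at $r=r_j$, which is how $r_j$ was defined. This point is the worst case, and there
\[
\mu(I)\ll B_{j-1}^{-1}(L_{j-1}M_j)^{-m}\le r_j^{s},
\]
which is precisely the third term in the minimum. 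For $r<r_j$, the second bound scales like $r^m$ and $m\ge s$, so $\mu(I)\ll r^s$. For $r>r_j$, the first bound is constant and $r^s>r_j^s$, so again $\mu(I)\ll r^s$.

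\emph{Conclusion.} In every case $\mu(I)\ll r^s$, so the mass distribution principle gives $\hdim K\ge s$; letting $s$ increase to the right-hand side of \eqref{dimbound} proves the lemma. The main obstacle is the middle range $[M_j^{-\rho},M_j^{-1}]$. There one must check that the packing bound $(rM_j^{\rho})^m b_j$ actually follows from the separation (1), because the cubes of $\mathcal F_j$ sit inside parents whose arrangement is only controlled by (2). One must also verify that the interpolation through $r_j$ is monotone in $r$, using $s\le m$. The ordering in (3) is exactly what guarantees that these case boundaries nest correctly.
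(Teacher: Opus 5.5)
Your proof is correct and follows the paper's argument essentially step for step: the same uniform measure $\mu(Q)=B_j^{-1}$, the same reduction to $O_m(1)$ parents, and the same four ranges of $r$ cut at $M_j^{-\rho}$, $r_j$, $M_j^{-1}$. As in the paper, $r_j$ is the crossover point between the count from (2) and the packing bound $B_j^{-1}(rM_j^{\rho})^m$ from (1). One small simplification: in the range $L_j\le r<M_j^{-\rho}$ you do not need separation between cubes with different parents. That claim needs $M_j$ eventually increasing; (3) does force this, since $M_j^{-\rho}<M_j^{-1}<L_{j-1}\le M_{j-1}^{-\rho}$. It suffices, as in the paper, that $I$ meets $O_m(1)$ of the disjoint parents of side $L_{j-1}>r$, and then only $O(1)$ children inside each one by (1).
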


\begin{remark}
  We give a remark on the quantity $r_j$. For a cube $I$ of sidelength $r$ and $P\in \mathcal{F}_{j-1}$, there are two ways to bound the number of cubes $Q\in \mathcal{F}_{j}(P)$ which can intersect $I$.
  \begin{itemize}
  \item By volume estimation of cubes $Q$: if $Q\in \mathcal{F}_j(P)$ intersects $I$, it will lie in a neighborhood $I_{\delta}$ of $I$.
      In later applications, this will be the number given in (\ref{f2.1});
  \item By volume estimation of gaps: $M_j^{-\rho}$-separation of the cubes in $\mathcal{F}_{j}(P)$ gives
  $$\#\left\{
			Q\in\mathcal F_j(P): Q\cap I\ne\varnothing\right\}\ll \left(1+rM_j^{-\rho}\right)^m.
			$$
  \end{itemize}
 The quantity $r_j$ is nothing but the boundary of $r$ on which estimation is more effective.
\end{remark}
\begin{proof}
To use the mass distribution principle, we define a probability measure $\mu$ supported on $K$:  let $\mu(J_0)=1$, and distribute the mass of every parent equally among its children. Then each cube $Q\in\mathcal F_j$ has mass
\begin{equation*}
\mu(Q)=B_j^{-1}.
\end{equation*}

Denote the quantity in the right side of (\ref{dimbound}) by $s_0$. If  $s_0\le 0$, there is nothing to prove. So we assume $s_0>0$ and let $s$ be any positive number smaller than $s_0$.  We show that
\[
\mu(I)\ll r^s
\]
for every sufficiently small cube $I$ of side length $r$.

Choose $j$ such that
\[
L_j\le r<L_{j-1}.
\]
Since the level-$(j-1)$ cubes are disjoint and have the same side length $L_{j-1}$, the cube $I$ can meet only $O_m(1)$ of them.  It is therefore enough to consider one parent $P\in\mathcal F_{j-1}$.

By condition (2),
\begin{equation}\label{eq:grid-mass}
\mu(I\cap P)\ll B_{j}^{-1}\cdot b_j\left(\frac{r+M_j^{-1}}{L_{j-1}}\right)^m=B_{j-1}^{-1}\left(\frac{r+M_j^{-1}}{L_{j-1}}\right)^m.
\end{equation}
On the other hand, the separation of the children gives
\[
\#\left\{Q\in\mathcal F_j(P): Q\cap I\ne\varnothing\right\}\ll (1+rM_j^\rho)^m,
\]
and hence
\begin{equation}\label{eq:separation-mass}
\mu(I\cap P)\ll B_j^{-1}(1+rM_j^\rho)^m.	
\end{equation}

We now consider the ranges of $r$ compared with the quantities appearing in condition (3).

	\noindent{\bf Case 1: $M_j^{-1}\le r<L_{j-1}$}. By the choice of $s$,
\[
s<\liminf_{j\to\infty}\frac{\log B_j}{-\log L_j},
\]
we have, for all sufficiently large $j$,
\[
B_{j-1}^{-1}\le L_{j-1}^s.
\]
Therefore, by \eqref{eq:grid-mass},
\[
\mu(I\cap P)\ll B_{j-1}^{-1}\left(\frac{r}{L_{j-1}}\right)^m\ll L_{j-1}^{s-m}r^m\le r^s,
\]
since $s<m$ and $r<L_{j-1}$.

	\noindent{\bf Case 2: $r_j\le r<M_j^{-1}.$} By the choice of $s$,
\begin{align}\label{eq:detail1}
B_{j-1}^{-1}(L_{j-1}M_j)^{-m}\le r_j^s
\end{align}
for all sufficiently large $j$.  Since $r\ge r_j$, \eqref{eq:grid-mass} gives
\[
\mu(I\cap P)\ll B_{j-1}^{-1}(L_{j-1}M_j)^{-m}\ll r^s.
\]

	\noindent{\bf Case 3: $	M_j^{-\rho}\le r<r_j.$} It follows from \eqref{eq:separation-mass} that
\[
\mu(I\cap P)\ll B_j^{-1}\cdot M_j^{m\rho}\cdot r^m\le B_j^{-1}M_j^{m\rho}r_j^{m-s}\cdot r^s,
\]
since $s<m$ and $r<r_j$.  By the definition of $r_j$, it follows that
\[B_j^{-1}M_j^{m\rho}r_j^{m-s}=B_{j-1}^{-1}(L_{j-1}M_j)^{-m}r_j^{-s}.\]
Together with (\ref{eq:detail1}), we have that
\[
\mu(I\cap P)\ll r^s.
\]

	\noindent{\bf Case 4: $L_j\le r<M_j^{-\rho}.$} By the separation assumption (1), the cube $I$ meets only $O_m(1)$ level-$j$ children. Hence,
\[
\mu(I\cap P)\ll B_j^{-1}.
\]
The first lower limit in \eqref{dimbound} gives $B_j^{-1}\le L_j^s\le r^s$ for all sufficiently large $j$. Thus,
\[
\mu(I)\ll r^s.
\]

By the mass distribution principle, $\hdim K\ge s.$ Letting $s\to s_0$, the desired result follows.
\end{proof}

\section{Preliminaries for Case I: $n=1$}
\label{s:1-case}

From this section on until Section \ref{s:1-proof}, $n=1$ and $m\ge 1$. Let $\psi$ be non-increasing with
\[
\tau_\psi:=\liminf_{q\to\infty}\frac{-\log\psi(q)}{\log q}>\frac1m.
\]
We give some preliminary preparations.

\subsection{Regular scales}

Fix $\ve>0$ so that
\[
0<\varepsilon<\frac16\left(\tau_\psi-\frac1m \right), \ \ m(\tau_\psi-2\varepsilon)>1.
\]
By Lemma~\ref{lem2.1}, there exist arbitrarily large $M$ such that
\begin{equation}\label{eq:1-regular}
\psi(M)\ge M^{-(\tau_\psi+\varepsilon)},\ \ \ \psi(t)\le\psi(M)\left(\frac tM\right)^{-(\tau_\psi-\varepsilon)}\qquad (t\ge M).
\end{equation}
Moreover, for all sufficiently large $t$,
\begin{equation}\label{eq:1-upper}
\psi(t)\le t^{-(\tau_\psi-\varepsilon)},\ \ {\text{and so}}\ \ \sum_{q=1}^\infty\psi(q)^m<\infty.
\end{equation}
Whenever a large scale $M$ is chosen below, it is always chosen from the regular scales of $\psi$.

\subsection{Resonant points and exact cubes}

For $q\ge1$ and $\mathbf p\in\Z^m$, define the \emph{resonant point}
\begin{equation*}
R_{\mathbf p,q}=\frac{\mathbf p+\boldsymbol\theta}{q}.
\end{equation*}
For any $0<c<1$, it follows that
\[
c\psi(q)\le |q\mathbf x-\mathbf p-\boldsymbol\theta|< \psi(q)\ \ \Longleftrightarrow \ \ \frac{c\psi(q)}q\le \left|\mathbf x-R_{\mathbf p,q}\right|<\frac{\psi(q)}q.
\]

Define
\begin{equation*}
\mathcal Q_M=\left\{q\in \N: \frac M2<q\le M,\ q\text{ prime}\right\}
\end{equation*} which is of cardinality comparable with $M/{\log M}$ by prime number theorem.

For $0<\eta<1$, let
\begin{equation}\label{eq:1-child-side}
\ell_M(\eta)=(1-\eta)\frac{\psi(M)}M,
\end{equation}
and, for $q\in\mathcal Q_M$, define the \emph{exact cube}
\begin{equation*}
A_\eta(\mathbf p,q)=R_{\mathbf p,q}+\left[\eta\frac{\psi(q)}q,\,\eta\frac{\psi(q)}q+\ell_M(\eta)\right]^m.
\end{equation*}
It is easy to see that every $\mathbf x\in A_\eta(\mathbf p,q)$ satisfies
\begin{equation*}
\eta\psi(q)\le|q\mathbf x-\mathbf p-\boldsymbol\theta|\le\psi(q).
\end{equation*}

Write a collection of the resonant points as
\[
\mathscr R_M=\left\{R_{\mathbf p,q}: q\in\mathcal Q_M,\ \mathbf p\bmod q\right\}
\]
and a collection of exact cubes as
\[
\mathscr C_M^\eta=\left\{A_\eta(\mathbf p,q): q\in\mathcal Q_M,\ \mathbf p\bmod q\right\}.
\]

\subsection{Dangerous cubes and good cubes}

For each integer $s\ge1$ and $\mathbf r\in\Z^m$, define the \emph{dangerous cube} as
\begin{equation*}
D_\eta(\mathbf r,s)=B\left(\frac{\mathbf r+\boldsymbol\theta}{s}, \eta\frac{\psi(s)}s\right).
\end{equation*}
For a cube $J\subset\T^m$ and $T>1$, we define
\begin{equation*}
\mathfrak B_T^\eta(J)=\sum_{s>T}\sum_{\mathbf r\bmod s}\mathcal L_m\left(J\cap D_\eta(\mathbf r,s)\right),
\end{equation*}which measures the total mass of dangerous cubes of all level $>T$ inside a cube $J$. Since we focus on $\mathbb{T}^m$, so $\mathbf r$ goes over a complete residue class modulo $s$.

\begin{definition}[Good cube] Let $T>1$ and $0<\eta,\delta<1$. A cube $J\subset\T^m$ is called $(T,\eta,\delta)$-\emph{good} if
\begin{equation*}
\mathfrak B_T^\eta(J)\le\delta\mathcal L_m(J).
\end{equation*}
\end{definition}

The following simple estimate shows the existence of good cubes which will provide the initial good parent for latter Cantor
construction: \begin{align*}
\mathfrak B_T^\eta(J)&\le \sum_{s>T}\sum_{\mathbf r\bmod s}\mathcal L_m\left(D_\eta(\mathbf r,s)\right)\asymp \sum_{s>T}s^m\cdot \left(\frac{\psi(s)}s\right)^m\\
&=\sum_{s>T}\psi(s)^m\to 0
\end{align*} as $T\to\infty$. So once $T$ is sufficiently large, we will have
$$\mathfrak B_T^\eta(J)\le\delta\mathcal L_m(J),$$
so $J$ is $(T,\eta,\delta)$-\emph{good}.

\subsection{Auxiliary lemmas}The first two lemmas characterize when two resonant points are close which will be used throughout
 the proof of $n=1$ case.
\begin{lemma}\label{l:1-resonant-1} Let $\delta>0$. Assume that
\[\|R_{\bp,q}-R_{\br,s}\|<\delta\]
for some $\delta>0$. Then, there exists $\mathbf k\in\Z^m$ such that
\begin{equation}\label{eq:1-k-exists}
\left|\mathbf k+(s-q)\boldsymbol\theta\right|<qs\delta,
\end{equation}
and
\begin{equation}\label{eq:1-k-mod}
\mathbf k\equiv s\mathbf p \pmod q,\  \ \  \mathbf k \equiv -q\mathbf r \pmod s, \ \ \ \mathbf k\equiv s\mathbf p-q\mathbf r\pmod{qs}.
\end{equation}
In particular, if $(q,s)=1$, then $\mathbf k$ determines $\mathbf p\bmod q$ and $\mathbf r\bmod s$ uniquely.
\end{lemma}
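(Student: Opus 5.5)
Unwinding the torus distance, we get an integer vector $\mathbf t\in\Z^m$ with
\[
\left|\frac{\mathbf p+\boldsymbol\theta}{q}-\frac{\mathbf r+\boldsymbol\theta}{s}-\mathbf t\right|<\delta .
\]
Multiplying through by $qs$ gives
\[
\bigl|s\mathbf p-q\mathbf r-qs\mathbf t+(s-q)\boldsymbol\theta\bigr|<qs\delta .
\]
So the natural choice is
\[
\mathbf k:=s\mathbf p-q\mathbf r-qs\mathbf t\in\Z^m ,
\]
which is exactly \eqref{eq:1-k-exists}.

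The congruences \eqref{eq:1-k-mod} are then read off coordinatewise from this formula for $\mathbf k$.
\begin{itemize}
\item Modulo $q$, the terms $q\mathbf r$ and $qs\mathbf t$ vanish, so $\mathbf k\equiv s\mathbf p$.
\item Modulo $s$, the terms $s\mathbf p$ and $qs\mathbf t$ vanish, so $\mathbf k\equiv -q\mathbf r$.
\item Modulo $qs$, only $qs\mathbf t$ vanishes, so $\mathbf k\equiv s\mathbf p-q\mathbf r$.
\end{itemize}

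For the final assertion, assume $(q,s)=1$. Then $s$ is invertible modulo $q$, so $\mathbf p\equiv s^{-1}\mathbf k\pmod q$ is determined by $\mathbf k$. Likewise $q$ is invertible modulo $s$, so $\mathbf r\equiv -q^{-1}\mathbf k\pmod s$ is determined by $\mathbf k$. This is also what the Chinese remainder theorem says about the third congruence.

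One point needs care. If $\mathbf p$ and $\mathbf r$ are replaced by other representatives modulo $q$ and $s$, the vector $\mathbf t$ changes but the three congruences do not. So "determines uniquely" should be read on residue classes, consistent with $\mathbf p\bmod q$ and $\mathbf r\bmod s$ being the indexing used in $\mathscr R_M$. The argument is purely elementary, and this bookkeeping of representatives is the only step that needs attention.
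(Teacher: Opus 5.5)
Your proposal is correct and matches the paper's proof essentially verbatim. Like the paper, you pick the integer translate realizing the torus distance, set $\mathbf k=s\mathbf p-q\mathbf r-qs\mathbf t$, read off the congruences, and invert $s$ modulo $q$ and $q$ modulo $s$ when $(q,s)=1$; your remark about representatives is a harmless extra clarification.
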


\begin{proof} The assumption implies that $$
\left|\frac{\mathbf p+\boldsymbol\theta}{q}-\frac{\mathbf r+\boldsymbol\theta}{s}-\boldsymbol{\ell}\right|=\left|\frac{\mathbf p s-\mathbf r q-qs\boldsymbol{\ell}+(s-q)\boldsymbol\theta}{qs}\right|<\delta
$$ for some $\boldsymbol\ell\in \Z^m$.
Put $\mathbf k=s\mathbf p-q\mathbf r-qs\boldsymbol{\ell}$. Then,
\[
\left|\mathbf k+(s-q)\boldsymbol\theta\right|<qs\delta,
\]
and
\[
\mathbf k \equiv s\mathbf p\pmod q, \ \ \ \mathbf k\equiv -q\mathbf r\pmod s, \ \ \ \mathbf k\equiv s\mathbf p-q\mathbf r\pmod{qs}.
\]
If $(q,s)=1$, then $s$ is invertible modulo $q$ and $q$ is invertible modulo $s$. Hence,
\[
\mathbf p \equiv s^{-1}\mathbf k \pmod q, \ \ \ \mathbf r \equiv -q^{-1}\mathbf k\pmod s,
\]
so $\mathbf k$ determines both residue classes uniquely.
\end{proof}
The next lemma deals with a case when $m=1$ and $\theta\in\mathbb Q$.
\begin{lemma}\label{l:1-resonant-2} Suppose that $\theta=b/D$ with $(b,D)=1$. Fix $\delta>0$ and an integer $q>D$. Let
$$\mathcal{S}=\left\{s\in\mathbb N: sq\delta<\frac{1}{ D},\ (q,s)=1\right\}.$$
Then, 
$$\#\Big\{p\ {\rm{ mod }}\ q:\ \Vert R_{p,q}-R_{r,s}\Vert<\delta\text{ for some }s\in\mathcal{S}\Big\}\le 1.$$
\end{lemma}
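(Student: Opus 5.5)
The plan is to reduce the lemma to a congruence for $p$ that does not depend on $s$. I would use Lemma~\ref{l:1-resonant-1} together with the fact that $\theta=b/D$ has denominator $D$.

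\textbf{Step 1: pin down $k$ exactly.} Suppose $p$ is counted, so $\|R_{p,q}-R_{r,s}\|<\delta$ for some $s\in\mathcal S$ and some $r$. Lemma~\ref{l:1-resonant-1} (with $m=1$) gives an integer $k$ with
\[
\left|k+(s-q)\frac bD\right|<qs\delta<\frac1D,
\]
where the last inequality is the definition of $\mathcal S$. Multiplying by $D$ gives $|Dk+(s-q)b|<1$. The left side is the absolute value of an integer, so it is $0$. Hence $Dk=(q-s)b$.

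\textbf{Step 2: eliminate $s$ and $k$.} Lemma~\ref{l:1-resonant-1} also gives $k\equiv sp\pmod q$. Multiplying by $D$ and using Step 1,
\[
Dsp\equiv Dk=(q-s)b\equiv -sb\pmod q .
\]
Since $(q,s)=1$, we may cancel $s$ and obtain
\[
Dp\equiv -b\pmod q .
\]
The point is that this congruence involves neither $s$ nor $r$. So every counted residue class $p \bmod q$ solves one fixed linear congruence.

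\textbf{Step 3: count solutions.} Let $g=(D,q)$. The congruence $Dp\equiv -b\pmod q$ is solvable only if $g\mid b$. Since $g\mid D$ and $(b,D)=1$, this forces $g=1$. In that case $D$ is invertible modulo $q$ and $p\equiv -D^{-1}b\pmod q$ is unique. Otherwise there are no solutions at all. Either way there is at most one $p \bmod q$, which is the claim.

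The only delicate step is Step 1: exact equality $Dk=(q-s)b$ depends on the strict inequality $qs\delta<1/D$ in the definition of $\mathcal S$. The hypothesis $q>D$ is not actually needed for this argument. In the application $q$ is a prime exceeding $D$, so $(D,q)=1$ automatically and the unique class is $p\equiv -D^{-1}b \pmod q$.
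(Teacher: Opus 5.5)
Your proof is correct and follows the paper's argument: Lemma~\ref{l:1-resonant-1} together with the strict bound $qs\delta<1/D$ forces $Dk+(s-q)b=0$. Reducing $k\equiv sp\pmod q$ then gives $s(Dp+b)\equiv 0\pmod q$, and cancelling $s$ yields $Dp\equiv -b\pmod q$.

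The only difference is the last step. The paper gets $(q,D)=1$ from $q$ being prime with $q>D$, and primality is not among the lemma's stated hypotheses. Your observation that solvability of the congruence already forces $(D,q)=1$ removes the need for both primality and $q>D$.
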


\begin{proof}
Bear Lemma~\ref{l:1-resonant-1} in mind. By the definition of $\mathcal{S}$, one sees that for any $s\in \mathcal{S}$ the right-hand side of \eqref{eq:1-k-exists} is strictly less than $1/D$, while the left side is a rational number with denominator $D$. This forces
\begin{equation}\label{eq:1-rat-0}
k+(s-q)\frac bD=0, \ \ {\text{so}}\ \ Dk+(s-q)b=0.
\end{equation}
Recall \eqref{eq:1-k-mod} that $k\equiv sp\pmod q$. Thus modulo $q$ on \eqref{eq:1-rat-0} gives
\[
s(Dp+b)\equiv0\pmod q.
\]
Note that $(q,s)=1$, and $(q,D)=1$ since $q$ is prime and $q>D$. Thus, we have
\begin{equation*}
p\equiv-bD^{-1}\pmod q,
\end{equation*}
which proves the lemma.
\end{proof}

The last lemma shows an empty intersection for special exact cubes and dangerous cubes.
\begin{lemma}\label{l:1-resonant-3} Fix $0<\eta<1$ and an arbitrary integer $K\ge 1$. Then, for all sufficiently large $q$, for every
$\bp\text{ mod }q$, and every $\br\text{ mod }s$, we have
$$A_{\eta}(\bp,q)\cap D_{\eta}(\br,kq)=\emptyset$$
for all $k\le K$.
\end{lemma}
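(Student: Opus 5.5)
The plan is to argue by contradiction. Suppose a point $\mathbf x$ lies in both $A_\eta(\mathbf p,q)$ and $D_\eta(\mathbf r,kq)$ for some $1\le k\le K$. From this we extract a very small, one-signed real number of the form $(\text{integer})+(k-1)\theta_i$. For large $q$ such a number cannot exist, because $\psi(q)\to0$; this follows from \eqref{eq:1-upper}, since $\tau_\psi>1/m>0$.

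\emph{Step 1: the two constraints on $\mathbf x$.} Throughout, $q\in\mathcal Q_M$, so $M/2<q\le M$. By the definition of $A_\eta(\mathbf p,q)$, each coordinate of $\mathbf x-R_{\mathbf p,q}$ lies in $[\eta\psi(q)/q,\ \eta\psi(q)/q+\ell_M(\eta)]$. Since $q\le M$ and $\psi$ is non-increasing, $\ell_M(\eta)\le(1-\eta)\psi(q)/q$. Hence
\[
\eta\frac{\psi(q)}q\le (\mathbf x-R_{\mathbf p,q})_i\le\frac{\psi(q)}q,\qquad i=1,\dots,m .
\]
On the other side, $\mathbf x\in D_\eta(\mathbf r,kq)$ gives, modulo $\Z^m$,
\[
|\mathbf x-R_{\mathbf r,kq}|<\eta\frac{\psi(kq)}{kq}\le\eta\frac{\psi(q)}{kq},
\]
again using monotonicity of $\psi$.

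\emph{Step 2: combine the constraints.} Subtracting the two displays, for some $\boldsymbol\ell\in\Z^m$ each coordinate of $R_{\mathbf p,q}-R_{\mathbf r,kq}-\boldsymbol\ell$ lies in the open interval
\[
\Big(-\frac{(1+\eta)\psi(q)}{q},\ -\frac{\eta\psi(q)}{q}\big(1-\tfrac1k\big)\Big).
\]
We then multiply by $kq$. As in the proof of Lemma~\ref{l:1-resonant-1},
\[
kq\big(R_{\mathbf p,q}-R_{\mathbf r,kq}-\boldsymbol\ell\big)=\mathbf u+(k-1)\boldsymbol\theta,\qquad \mathbf u:=k\mathbf p-\mathbf r-kq\boldsymbol\ell\in\Z^m .
\]
So every coordinate satisfies
\[
u_i+(k-1)\theta_i\in\big(-k(1+\eta)\psi(q),\ -\eta(k-1)\psi(q)\big].
\]
In particular $u_i+(k-1)\theta_i$ is strictly negative, and for $k=1$ it lies in the open interval $(-(1+\eta)\psi(q),0)$.

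\emph{Step 3: derive the contradiction, splitting on $(k-1)\theta_1$.}
\begin{itemize}
\item If $(k-1)\theta_1\in\Z$ (which always holds for $k=1$), then $u_1+(k-1)\theta_1$ is a strictly negative integer of absolute value less than $K(1+\eta)\psi(q)$. This is impossible once $K(1+\eta)\psi(q)<1$.
\item If $(k-1)\theta_1\notin\Z$, the same bound gives $\|(k-1)\theta_1\|<K(1+\eta)\psi(q)$. This is impossible once $K(1+\eta)\psi(q)<c_K$, where
\[
c_K:=\min\big\{\|(k-1)\theta_1\|:\ 2\le k\le K,\ (k-1)\theta_1\notin\Z\big\}>0 .
\]
\end{itemize}
The constant $c_K$ is positive because only finitely many $k$ are involved. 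Since $\psi(q)\to0$, both thresholds hold for all sufficiently large $q$. The resulting bound depends only on $\eta$, $K$ and $\boldsymbol\theta$, and not on $\mathbf p$, $\mathbf r$ or $M$, which is exactly the uniformity the lemma claims.

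The only delicate point is Step 1. There we must make sure the upper edge of the exact cube is at most $\psi(q)/q$, which uses $q\le M$ and the monotonicity of $\psi$. We also need the one-sidedness: the exact cube sits strictly on the positive side of $R_{\mathbf p,q}$, at distance at least $\eta\psi(q)/q$. This is what excludes the value $0$ in the case $(k-1)\theta_1\in\Z$, and in particular excludes $\mathbf r=k\mathbf p$ with $R_{\mathbf r,kq}=R_{\mathbf p,q}$.
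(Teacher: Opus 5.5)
Your proof is correct and follows essentially the same route as the paper. In both, a common point forces an integer translate of $(k-1)\boldsymbol\theta$ to lie within $O_K(\psi(q))$ of $0$; the paper phrases this as $\|k\mathbf t+(k-1)\boldsymbol\theta\|<\eta\psi(q)$ with $\mathbf t=q\mathbf x-\mathbf p-\boldsymbol\theta$, and your restriction to the first coordinate with explicit sign bookkeeping is a harmless simplification. Both then split on whether $(k-1)\boldsymbol\theta$ is integral, using the one-sided position of the exact cube to exclude the value $0$ in the integral case and a positive minimum over $2\le k\le K$ otherwise.
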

\begin{proof}
Assume on the contrary that there exists $\mathbf x$ in $A_\eta(\mathbf p,q)\cap D_{\eta}(\br,kq)$. Write
\begin{equation}\label{eq:1-t-def}
\mathbf t:=q\mathbf x-\mathbf p-\boldsymbol\theta.
\end{equation}
By the definition of the exact cube and dangerous cube, one has
\begin{equation*}
\mathbf t\in\left[\eta\psi(q),\,\eta\psi(q)+q\ell_M(\eta)\right]^m, \ \ \ \ |kq\mathbf x-\mathbf r-\boldsymbol\theta|<\eta\psi(kq).
\end{equation*}

Inserting the expression of $q\mathbf x=\mathbf t+\mathbf p+\boldsymbol\theta$ from (\ref{eq:1-t-def}), it follows that
\[
kq\mathbf x-\mathbf r-\boldsymbol\theta=k\mathbf t+(k-1)\boldsymbol\theta+(k\mathbf p-\mathbf r),
\]
and so
\begin{equation}\label{eq:1-mul-c}
\Vert k\mathbf t+(k-1)\boldsymbol\theta\Vert<\eta\psi(kq)\le \eta\psi(q).
\end{equation}
Since $\mathbf t$ can be arbitrarily small,
we will show that for sufficiently large $q$, (\ref{eq:1-mul-c}) cannot happen for every $k\le K$. Define $$
\epsilon_K=\min\Big\{\|(k-1)\boldsymbol\theta\|: 2\le k\le K, (k-1)\boldsymbol\theta\notin\Z^m\Big\},\ {\text{where}}\ \min{\emptyset}:=+\infty.
$$ Choose $q$ so large that $$
  |K\mathbf{t}|+\eta\psi(q)<\epsilon_K.
  $$
\begin{itemize}
  \item For those $2\le k\le K$ with $(k-1)\boldsymbol\theta\notin\Z^m$, (\ref{eq:1-mul-c}) cannot happen, since
   $$\epsilon_K\le \|(k-1)\boldsymbol\theta\|\le  |k\mathbf t|+\eta\psi(kq)<\epsilon_K.$$

\item For those $2\le k\le K$ with 	$(k-1)\boldsymbol\theta\in\Z^m$, \eqref{eq:1-mul-c} reduces to
\begin{equation}\label{eq:1-contradicts}
\|k\mathbf t\|<\eta\psi(kq).
\end{equation}
For sufficiently large $q$, since $\mathbf t$ is so small that for every $k\le K$
\[
\|k\mathbf t\|=|k\mathbf t|\ge |\mathbf{t}|>\eta\psi(q).
\]
This contradicts \eqref{eq:1-contradicts}, since $\psi$ is non-increasing. \qedhere
\end{itemize}
\end{proof}
\subsection{Main proposition}
The next proposition describes the separation properties of the resonant points, which is vital for the dimension of $\Exact_{m,1}(\psi,\boldsymbol\theta)$. Starting from a good exact cube $J$, we will show that there are sufficiently many well separated good exact cubes inside $J$ which can avoid all dangerous cubes in previous levels.

Choose $a>0$ small and $\rho>0$ close to $1+1/m$ such that
\begin{equation}\label{eq:1-rho}
a(m+1)<1, \ \ \ 1+\frac1m<\rho<\min\left\{1+\tau_\psi-\varepsilon,\,\frac1m+\frac{m(1+\tau_\psi+2\varepsilon)}{m+1}\right\}.
\end{equation}

\begin{proposition}\label{p:1-tran} Let $T>1,  0<\eta_0<\eta<1,$ $0<\delta<1$ and let $J\subset\T^m$ be a $(T,\eta_0,\delta)$-good cube of side
length $L$. Then, for every sufficiently large regular scale $M$ with $M^a>T$,
one can retain at least
\begin{equation*}
c_1L^m \frac{M^{m+1}}{\log M}
\end{equation*}
exact cubes in $\mathscr{C}_M^\eta$, where $c_1=c_1(m,\sigma,\delta)>0$, satisfying
\begin{enumerate}
\item [(a)] they are disjoint from every $D_{\eta_0}(\mathbf r,s)$ with $T<s\le M;$

\item [(b)] they are pairwise $\gg M^{-\rho}$-separated;

\item [(c)] all of them are $(M,\eta,\delta)$-good.
\end{enumerate}
\end{proposition}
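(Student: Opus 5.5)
We start from the full family of exact cubes $A_\eta(\mathbf p,q)\in\mathscr C_M^\eta$ with $R_{\mathbf p,q}$ in the middle half of $J$. Resonant points of a fixed prime denominator form the lattice $q^{-1}(\Z^m+\boldsymbol\theta)$, so there are $\asymp L^m M^{m+1}/\log M$ such cubes once $M$ is large, since $L\gg M^{-1}$ and $|\mathcal Q_M|\asymp M/\log M$. We then delete the cubes that fail (a), (b) or (c), and show that the three deletions together remove at most half of this family. Each deletion is a counting problem over pairs of resonant points, and we treat it by averaging rather than by pointwise separation, as in Lemma~\ref{l:1-resonant-1}.

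\textbf{(a) Low-level dangerous cubes.} For $s\le M^a$ we use Lemma~\ref{l:1-resonant-1}. If $A_\eta(\mathbf p,q)$ meets $D_{\eta_0}(\mathbf r,s)$, then $\|R_{\mathbf p,q}-R_{\mathbf r,s}\|\ll \psi(s)/s$, so there is $\mathbf k$ with $|\mathbf k+(s-q)\boldsymbol\theta|\ll q\psi(s)$. Since $q$ is prime and larger than $s$, we have $(q,s)=1$, so $\mathbf k$ determines $\mathbf p\bmod q$.

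\begin{itemize}
\item For generic $\boldsymbol\theta$, we count the admissible $\mathbf k$ for each pair $(q,s)$ and sum. The number of bad pairs is $\ll \sum_{q}\sum_{s\le M^a}(1+q\psi(s))^m$, which is $\ll M^{1+a(m+1)}$ up to lower-order terms.
\item For the cases $m=1$ with $\theta$ irrational, and $s-q$ small, we use Lemma~\ref{lem2.3}.
\item For rational $\theta$ we use Lemma~\ref{l:1-resonant-2}: each $q$ then loses at most one residue class.
\item The degenerate multiples $s=kq$ cannot occur since $s<q$, and Lemma~\ref{l:1-resonant-3} covers them in any case.
\end{itemize}

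Because $a(m+1)<1$, the losses from $s\le M^a$ are negligible against $M^{m+1}/\log M$.

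For $M^a<s\le T$ the range is empty, because $M^a>T$. So in fact only $s\le M^a$ with $s>T$ matters, and the larger levels are handled by the goodness of $J$. Here is how. For $s$ with $T<s\le M$, the total Lebesgue mass of the enlarged dangerous balls inside $J$ is at most $\mathfrak B_T^{\eta_0}(J)\le\delta\mathcal L_m(J)$ (up to a constant from enlargement). We average the indicator of "$A_\eta(\mathbf p,q)$ meets $D_{\eta_0}(\mathbf r,s)$" over $q\in\mathcal Q_M$ and $\mathbf p$. Since the points $R_{\mathbf p,q}$, $q\in\mathcal Q_M$, are equidistributed at scale $\gg M^{-1}$ (again via Lemma~\ref{l:1-resonant-1} counting), the number of exact cubes hitting a set of measure $V$ is $\ll V M^{m+1}/\log M$ plus a boundary error. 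That error is controlled by enlarging each ball by the side $\ell_M(\eta)$. This is where $\delta$ small and the condition $\eta_0<\eta$ enter.

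\textbf{(b) Separation.} We count pairs $(\mathbf p,q)\ne(\mathbf p',q')$ with $\|R_{\mathbf p,q}-R_{\mathbf p',q'}\|<M^{-\rho}$ via Lemma~\ref{l:1-resonant-1}. For each pair $(q,q')$ the number of $\mathbf k$ is $\ll(1+M^{2-\rho})^m$, and for $\mathbf k$ small in the degenerate cases we again use Lemma~\ref{lem2.3} or Lemma~\ref{l:1-resonant-2}. The total over all $(q,q')$ is $\ll M^2\cdot M^{m(2-\rho)}$ plus a degenerate term. This is $o(M^{m+1}/\log M)$ because $\rho>1+1/m$. We discard one cube from each bad pair.

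\textbf{(c) Goodness at level $M$.} We use Markov's inequality:
\[
\sum_{(\mathbf p,q)}\mathfrak B_M^\eta\bigl(A_\eta(\mathbf p,q)\bigr)\le \sum_{s>M}\sum_{\mathbf r}\sum_{(\mathbf p,q)}\mathcal L_m\bigl(A_\eta(\mathbf p,q)\cap D_\eta(\mathbf r,s)\bigr).
\]
For each dangerous ball, the inner sum is controlled by the local density of exact cubes, which the same equidistribution count gives as $\ll \ell_M^m\cdot M^{m+1}/\log M$ per unit volume. Summing over $\mathbf r$ and over $s$ gives a total of $\ll \ell_M^m (M^{m+1}/\log M)\sum_{s>M}\psi(s)^m$. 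The tail $\sum_{s>M}\psi(s)^m$ tends to zero by \eqref{eq:1-upper}. Hence the number of cubes with $\mathfrak B_M^\eta>\delta\ell_M^m$ is $o(M^{m+1}/\log M)$.

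We expect the main obstacle to be the equidistribution and averaged counting for dangerous cubes with $s$ comparable to or larger than $M$. There the separation between $q$ and $s$ is not given by coprimality-free size bounds, and the inhomogeneous near-coincidences $\mathbf k\approx(q-s)\boldsymbol\theta$ must be handled uniformly in $\boldsymbol\theta$. The regular-scale bound \eqref{eq:1-regular} and the constraint on $\rho$ in \eqref{eq:1-rho} are what make these error terms subordinate.
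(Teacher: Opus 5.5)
Your part (b) is essentially the paper's argument. The ``$+1$'' term there is harmless because $(\#\mathcal Q_M)^2\asymp M^2/(\log M)^2$, so no Denjoy--Koksma step is needed. Parts (a) and (c), however, have genuine gaps.

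\textbf{Part (a).} You attach the two mechanisms to the wrong ranges of $s$, and each fails where you use it. Note also that (a) concerns all $T<s\le M$, not only $T<s\le M^a$.

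For $T<s\le M^a$, since $(1+q\psi(s))^m\ge(q\psi(s))^m$, your global count is of order $\bigl(\sum_{T<s\le M^a}\psi(s)^m\bigr)\frac{M^{m+1}}{\log M}$, not $O(M^{1+a(m+1)})$. The bracket increases to $\sum_{s>T}\psi(s)^m$, and nothing bounds this by a small multiple of $L^m$. Goodness of $J$ controls only the dangerous mass inside $J$, which a count over all of $\T^m$ cannot use. In the recursion $L\asymp\psi(T)/T$, so for $\psi(s)=s^{-\tau_\psi}$ the ratio $\sum_{s>T}\psi(s)^m/L^m$ is of order $T^{m+1}$.

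Conversely, goodness plus volume counting cannot handle $M^a\le s\le M$. About $L^mM^{m+1}$ dangerous balls meet $J$ in that range, most of radius $\ll M^{-1}$. Counting points of $q^{-1}(\Z^m+\boldsymbol\theta)$ in them by volume leaves an error of up to one point per ball and per $q$, i.e.\ of order $L^mM^{m+2}/\log M$. Enlarging by $\ell_M(\eta)$ does not touch this, since the discreteness is at scale $M^{-1}$; enlarging by $M^{-1}$ destroys the bound $\delta L^m$.

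The paper does the reverse:
\begin{itemize}
\item For $T<s<M^a$ there are only $O(M^{a(m+1)})$ balls. One enlarges them by $M^{-1}$ and counts by volume, losing at most $\delta L^m\sum_q q^m$ by goodness plus $O(M^{a(m+1)+m}/\log M)$.
\item For $M^a\le s\le M$ one counts the vectors $\mathbf k$ of Lemma~\ref{l:1-resonant-1} globally, using $(q,s)=1$ for $s\ne q$ and treating $s=q$ via $\eta_0<\eta$ and $1/q$-separation. This gives $\frac{M^{m+1}}{\log M}\sum_{s\ge M^a}\psi(s)^m+o\bigl(\frac{M^{m+1}}{\log M}\bigr)$, which is small because the tail starts at $M^a\to\infty$.
\item For $m=1$ the ``$+1$'' terms sum to $\asymp M^2/\log M$. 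One must split at $s=M/\log M$ and use Lemma~\ref{l:1-resonant-2} or Lemma~\ref{lem2.3} on $M/\log M<s<M$; that is where those lemmas are needed.
\end{itemize}

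\textbf{Part (c).} Your per-ball density bound is false. A ball $D=D_\eta(\mathbf r,s)$ with $s>M$ has radius at most $\eta\psi(M)/M$. Whenever $D$ is centred in an exact cube, and many are, $\sum_{(\mathbf p,q)}\mathcal L_m(A_\eta(\mathbf p,q)\cap D)\gg_\eta\mathcal L_m(D)$. Your bound is $\mathcal L_m(D)$ times $\ell_M(\eta)^mM^{m+1}/\log M\asymp\psi(M)^mM/\log M\to0$. Such a bound can only hold on average over balls, and that average is exactly the arithmetic you defer to your last paragraph.

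It is not a routine error term. For $s=kq$ with bounded $k$, the nearest level-$kq$ centre lies at distance $\|(k-1)\boldsymbol\theta\|/(kq)$ from $R_{\mathbf p,q}$, and coincides with it when $(k-1)\boldsymbol\theta\in\Z^m$. Whether a hit occurs is decided by the arithmetic of $\boldsymbol\theta$ and by the offset $\eta\psi(q)/q$ built into $A_\eta(\mathbf p,q)$, not by density.

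The paper handles this as follows:
\begin{itemize}
\item It excludes $k\le K$ by Lemma~\ref{l:1-resonant-3}.
\item It handles $k>K$ by the $1/(kq)$-grid count together with $\psi(kq)\ll\psi(M)k^{-(\tau_\psi-\varepsilon)}$ from \eqref{eq:1-regular}.
\item For $q\nmid s$ it counts $\mathbf k$ with $|\mathbf k+(s-q)\boldsymbol\theta|\le2s\psi(q)$.
\item When $m=1$ it splits at $s=M^{\tau_\psi-2\varepsilon}$. Below this threshold, rational $\theta$ forces exact coincidence of resonant points, which monotonicity of $\psi$ rules out. Irrational $\theta$ is handled by Lemma~\ref{lem2.3} on blocks of length $M$.
\end{itemize}
You flag this as the main obstacle but do not supply it, so (c) is not proved.
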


Sections \ref{s:1-low}--\ref{s:1-large} are devoted to the proof of Proposition~\ref{p:1-tran}. After proving it, we shall apply it recursively to construct a Cantor subset of $\Exact_{m,1}(\psi,\boldsymbol\theta)$, and then estimate its Hausdorff dimension.

\section{Proof of Proposition \ref{p:1-tran}($a$)}\label{s:1-low}

We remove those exact cubes in $\mathscr{C}_M^\eta$ which can intersect dangerous cubes coming from the small
denominators $T<s<M^a$ and intermediate denominators $M^a\le s\le M$ separately.

\subsection{Small denominators $T<s<M^a$}

We first remove those exact cube in $\mathscr{C}_M^\eta$ which can intersect dangerous cubes of small
denominators $T<s<M^a$.
The goodness of the parent cube ensures that the dangerous cubes take only a small proportion inside the parent cube, so not too much exact cubes in $\mathscr{C}_M^\eta$ need to be removed.

\begin{proposition}\label{p:1-low} Fix $0<\eta_0<\eta<1$, let $J$ be a $(T,\eta_0,\delta)$-good cube of side length $L$.
There exists $c_1=c_1(m,\delta)>0$ such that, for every sufficiently large $M$ with $M^{a(m+1)-1}=o(L^m)$, at least
\begin{equation*}
\frac{c_1L^mM^{m+1}}{\log M}
\end{equation*}
exact cubes $A_\eta(\mathbf p,q)$ with $q\in\mathcal Q_M$ are contained in $J$ and are disjoint from every dangerous cube $D_{\eta_0}(\mathbf r,s)$ with $T<s<M^a$.
\end{proposition}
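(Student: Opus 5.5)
We count exact cubes inside $J$ and subtract those that hit a dangerous cube. First, the total supply. For each prime $q\in\mathcal Q_M$ the resonant points $R_{\mathbf p,q}$, $\mathbf p\bmod q$, form a translate of the lattice $q^{-1}\Z^m$ in $\T^m$. Since the side length of $A_\eta(\mathbf p,q)$ is $\ell_M(\eta)\le \psi(M)/M=o(1/M)$, the number of $\mathbf p$ with $A_\eta(\mathbf p,q)\subset J$ is
\[
q^mL^m+O(q^{m-1}L^{m-1})\asymp L^mM^m .
\]
This holds as long as $L\gg 1/M$, which follows from $M^{a(m+1)-1}=o(L^m)$. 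Summing over $q\in\mathcal Q_M$ and using the prime number theorem gives $\gg L^mM^{m+1}/\log M$ exact cubes contained in $J$. It therefore suffices to show that the number of bad ones, meaning those meeting some $D_{\eta_0}(\mathbf r,s)$ with $T<s<M^a$, is at most half of this.

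Next, a pointwise bound in each lattice. Fix $s$ and $\mathbf r$, and let $D=D_{\eta_0}(\mathbf r,s)$. If $A_\eta(\mathbf p,q)$ meets $D$, then $R_{\mathbf p,q}$ lies in the enlarged set $D^+$, the ball with the same center and radius $\eta_0\psi(s)/s+2\psi(q)/q$. Since $s<M^a<M$, $\psi$ is non-increasing and $q\asymp M$, we have $\psi(q)/q\ll\psi(s)/s$, so $D^+$ is a ball of radius $\asymp_{\eta_0}\psi(s)/s$. For fixed $q$, the points of the shifted lattice $(\mathbf p+\boldsymbol\theta)/q$ lying in a ball of radius $\rho_s$ number at most
\[
\ll q^m\rho_s^m+1\ll M^m\,\mathcal L_m(D^+)+1 .
\]
Summing over primes $q\in\mathcal Q_M$ gives
\[
\#\{\text{bad cubes from }D\}\ll \frac{M^{m+1}}{\log M}\,\mathcal L_m(D^+)+\frac{M}{\log M}.
\]
Only dangerous cubes meeting a slight enlargement of $J$ matter; cubes near the boundary of $J$ contribute a lower-order error, which I would absorb by replacing $J$ with a slightly shrunken concentric cube.

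Now sum over $s$ and $\mathbf r$. Using $\mathcal L_m(D^+)\asymp\mathcal L_m(D)$, the main terms total
\[
\ll_{\eta_0}\frac{M^{m+1}}{\log M}\sum_{T<s<M^a}\sum_{\mathbf r}\mathcal L_m(J\cap D)\le \frac{M^{m+1}}{\log M}\,\mathfrak B^{\eta_0}_T(J)\le C\delta\, \frac{L^mM^{m+1}}{\log M},
\]
by goodness of $J$. The main obstacle is here: a dangerous ball only partially inside $J$, or much larger than its intersection with $J$, breaks the step from $\mathcal L_m(D^+)$ to $\mathcal L_m(J\cap D)$. Since $\psi(s)/s\ll s^{-1-\tau_\psi+\varepsilon}$, each ball is tiny compared with $L$ once $T$ is large, so only balls near $\partial J$ are affected. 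I would handle these by the shrinking-$J$ trick, or by counting their total contribution crudely.

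The "$+1$" error terms number $\sum_{s<M^a}s^m\ll M^{a(m+1)}$ balls, each contributing $\ll M/\log M$. This totals
\[
\ll \frac{M^{a(m+1)+1}}{\log M}=o\!\left(\frac{L^mM^{m+1}}{\log M}\right),
\]
exactly by the hypothesis $M^{a(m+1)-1}=o(L^m)$; this is why the hypothesis appears. Finally, the constant $C$ depends on $\eta_0$ through the ratio of radii of $D^+$ and $D$, whereas the statement has $c_1=c_1(m,\delta)$. I would first try to extract $c_1$ from the margin $1-C\delta$, making $C$ independent of $\eta_0$ by using the radius comparison $\psi(q)/q\le\psi(s)/s\cdot M^{-(1-a)}$, which makes the enlargement negligible. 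If that fails, I would instead require $\delta$ small relative to $C$. The number of surviving cubes is then at least $(c-C\delta-o(1))L^mM^{m+1}/\log M$, which proves the proposition.
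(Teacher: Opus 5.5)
Your plan (count the points $R_{\mathbf p,q}$ in $J$ for each prime $q$, subtract those near dangerous cubes, and control the loss by the goodness of $J$) is the right one. However, the step from your per-ball count to $\mathfrak B_T^{\eta_0}(J)$ fails as written. You count all grid points in the whole enlarged ball $D^+$, which gives a main term proportional to $\mathcal L_m(D^+)$, and then you replace this by $\mathcal L_m(J\cap D)$. That replacement is false for a dangerous cube that meets $J$ only partially but is large compared with $J$.

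Your reason for ignoring such cubes (``each ball is tiny compared with $L$ once $T$ is large'') is not available, because $T$ and $L$ are fixed and unrelated in the statement. In the recursive Cantor construction one has $T=M_{j-1}$ and $L=L_{j-1}=2^{-j}\psi(M_{j-1})/M_{j-1}$. So, e.g.\ for $\psi(q)=q^{-\tau}$, the dangerous cubes with $s$ just above $T$ have radius about $2^{j}L$. Goodness only forbids such a cube from covering all of $J$; it may still cover a corner of $J$ of proportion $\delta/2$. In that case your bound for the bad cubes coming from this single $D$ already exceeds the total supply $\asymp L^mM^{m+1}/\log M$. Shrinking $J$ does not cure this, since the real problem is that your count is not restricted to $J$ at all.

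The missing step is to localize before estimating by volume: count only grid points in $D^+\cap J$. The paper does this with the box $C_i=J_{-1/M}\cap\widetilde B_i$, where $\widetilde B_i$ is the $M^{-1}$-neighbourhood of the dangerous cube. This box is axis-parallel and has volume $\mathcal L_m(J\cap D)$ up to a boundary layer.

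Second, even after localizing, a bound $C\delta L^mM^{m+1}/\log M$ with an unspecified $C$ proves the proposition only for $\delta$ small relative to $C$. The statement is for every $\delta\in(0,1)$, so you need the constant in front of $\mathfrak B_T^{\eta_0}(J)$ to be exactly $1$. This requires the sharp lattice count for boxes $B$:
$\#\{\mathbf p\bmod q: R_{\mathbf p,q}\in B\}\le q^m\mathcal L_m(B)+O\bigl((q|B|+1)^{m-1}\bigr)$.
Against the supply $q^m\mathcal L_m(J)-O(M^{m-1})$, this gives $(1-\delta)L^m\sum_{q\in\mathcal Q_M}q^m$ minus errors.

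Your radius comparison $\psi(q)/q\le 2M^{a-1}\psi(s)/s$ correctly removes the $\eta_0$-dependence of the enlargement. It does not remove the constant hidden in $\ll q^m\rho_s^m+1$. The price of the sharp count is that the error per dangerous cube is $O(M^{m-1})$ rather than $O(1)$. Summed over the $O(M^{a(m+1)})$ cubes and over $q\in\mathcal Q_M$, it is $O(M^{a(m+1)+m}/\log M)$, and this is what the hypothesis $M^{a(m+1)-1}=o(L^m)$ absorbs. Your ``$+1$'' error would only require $M^{a(m+1)-m}=o(L^m)$, so your explanation of why the hypothesis appears is correct only for $m=1$.
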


\begin{proof}
Let $B_1,\ldots,B_N$ be all the dangerous cubes $D_{\eta_0}(\mathbf r,s)$ with $T<s<M^a$.  Since, for each fixed $s$, there are $s^m$ residue classes $\mathbf r\bmod s$, we have
\begin{equation*}
N\le\sum_{s<M^a}s^m=O_m(M^{a(m+1)}).
\end{equation*}
Since $B_1,\ldots,B_N$ contribute only part to the sum defining $\mathfrak B_T^{\eta_0}(J)$, the $(T,\eta_0,\delta)$-goodness of $J$ gives
\begin{align}\label{eq:11-removed}
\sum_{i=1}^N\mathcal L_m(J\cap B_i)&=\sum_{T<s<M^a}\sum_{\mathbf r\bmod s}\mathcal L_m\left(J\cap D_{\eta_0}(\mathbf r,s)\right)
\le\mathfrak B_T^{\eta_0}(J)\le\delta L^m.	
	\end{align}

We count the number of the resonant points $R_{\mathbf p, q}\in \mathscr R_M\cap J$ but far from the boundary of $J$ and the dangerous cubes ensuring that
\begin{equation}\label{eq:11-surv}
    A_\eta(\mathbf p,q)\subset J \qaq  A_\eta(\mathbf p,q)\cap D_{\eta_0}(\mathbf r,s)=\emptyset
\end{equation}
for every $T<s<M^a.$

Denote $\widetilde B_i$ the $M^{-1}$-neighborhood of $B_i$ and remove a strip of width $M^{-1}$ along the boundary of $J$. So define
\[
J_{-1/M}:=\left\{\mathbf x\in J: \operatorname{dist}(\mathbf x,J^c)\ge\frac1M\right\}, \ {\text{and}}\
F:=J_{-1/M}\setminus\bigcup_{i=1}^N\widetilde B_i.
\]

Since $q\asymp M$ for $q\in\mathcal Q_M$ and the diameter of $A_\eta(\mathbf p,q)$ is $o(M^{-1})$, then the inclusion in \eqref{eq:11-surv} is guaranteed once $R_{\mathbf p,q}\in F$.
So it suffices to count the resonant points contained in $F$.

$\bullet$ {\em The total number of resonant points $R_{\mathbf p,q}$ in $J_{-1/M}$ for a fixed $q\in\mathcal Q_M$.}

For fixed $q\in\mathcal Q_M$, the resonant points $\{R_{\mathbf p,q}: \mathbf p \bmod q\}$ are $q^{-1}$ separated. A volume estimate gives
\begin{align}\label{eq:11-grid}
\#\left\{\mathbf p\bmod q: R_{\mathbf p,q}\in J_{-1/M}\right\}&=q^m\mathcal L_m(J)+O\bigl((qL+1)^{m-1}\bigr)\nonumber\\
&=q^m \mathcal L_m(J)+O(M^{m-1}).
\end{align}

$\bullet$ {\em The number of resonant points $R_{\mathbf p,q}$ in $J_{-1/M}$ for a fixed $q\in\mathcal Q_M$ to be deleted.}

For $1\le i\le N$, let
\[
C_i:=J_{-1/M}\cap\widetilde B_i.
\]
Each $C_i$ is an axis-aligned box, and the same volume estimate yields
\begin{align*}
\#\left\{\mathbf p\bmod q: R_{\mathbf p,q}\in C_i\right\}&\le q^m\mathcal L_m(C_i)+O\bigl((q|\widetilde B_i|+1)^{m-1}\bigr)\\
&\le  q^m\mathcal L_m(J\cap B_i)+O(M^{m-1}).
\end{align*}
Thus the number of resonant points  to be deleted for a fixed $q\in\mathcal Q_M$ is bounded by
\begin{align}\label{eq:11-delete}
\le \sum_{i=1}^N\Big(q^m\mathcal L_m(J\cap B_i)+O(M^{m-1})\Big)=q^m\sum_{i=1}^NL_m(J\cap B_i)+O(M^{a(m+1)+m-1}).
\end{align}

$\bullet$ {\em The number of retained resonant points.}	Combining (\ref{eq:11-grid}) and (\ref{eq:11-delete}), we conclude that the number of resonant points $R_{\mathbf p,q}\in F$ with $q\in\mathcal Q_M$
\begin{align}
&\ge \sum_{q\in\mathcal Q_M}\left(q^m \mathcal L_m(J)+O(M^{m-1})-q^m\sum_{i=1}^N\mathcal{L}_m(J\cap B_i)+O(M^{a(m+1)+m-1})\right)\nonumber\\
&\ge \left((1-\delta)\mathcal L_m(J)\cdot\sum_{q\in\mathcal Q_M}q^m\right)+\sum_{q\in \mathcal Q_M}O\left(M^{a(m+1)+m-1}\right)\nonumber\\
&\asymp \frac{M^{m+1}}{\log M}\cdot L^m+O\left(\frac{M^{a(m+1)+m}}{\log M}\right)\gg_{\delta}L^m\frac{M^{m+1}}{\log M},\nonumber
\end{align}
where the second inequality follows from the goodness of $J$ (see (\ref{eq:11-removed})), and the last relation is because
$$a(m+1)<1,\ {\text{and then}}\ M^{1-a(m+1)}=o(L^m)$$
since $M$ can be sufficiently large.
\end{proof}

\subsection{Intermediate denominators $M^a\le s\le M$}

\begin{proposition}\label{p:1-int} Denote $N_<(M)$ the number of exact cubes $A_\eta(\mathbf p,q)$ in $\mathscr C_M^\eta$ which intersect at least one dangerous cube $D_{\eta_0}(\mathbf r,s)$ of denominators $M^a\le s\le M.$ Then, as $M\to\infty$,
\begin{equation*}
N_<(M) = o\left( \frac{M^{m+1}}{\log M} \right).
\end{equation*}
In particular, for any fixed $L>0$, by choosing $M$ sufficiently large,
\[ N_<(M) = o\left( L^m\frac{M^{m+1}}{\log M} \right). \]
\end{proposition}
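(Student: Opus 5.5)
The plan is to bound $N_<(M)$ by counting triples $(q,s,\mathbf k)$, using Lemma~\ref{l:1-resonant-1} to convert "$A_\eta(\mathbf p,q)$ meets $D_{\eta_0}(\mathbf r,s)$" into an integer vector $\mathbf k$ lying close to $-(s-q)\boldsymbol\theta$ which, when $(q,s)=1$, determines $\mathbf p \bmod q$.

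\emph{Step 1: the denominator $s=q$.} Since $q$ is a prime with $q>M/2$ and $s\le M$, either $(q,s)=1$ or $s=q$, so this is the only non-coprime denominator. In that case there is no contribution.
\begin{itemize}
\item If $\mathbf r\equiv\mathbf p$, the exact cube lies at sup-distance $\ge \eta\psi(q)/q>\eta_0\psi(q)/q$ from $R_{\mathbf p,q}$, so it misses $D_{\eta_0}(\mathbf p,q)$.
\item If $\mathbf r\not\equiv\mathbf p$, the points $R_{\mathbf r,q}$ and $R_{\mathbf p,q}$ are $1/q$-separated, while both cubes have size $O(\psi(q)/q)=o(1/q)$.
\end{itemize}
So we may assume $(q,s)=1$.

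\emph{Step 2: reduction to counting $\mathbf k$.} If the two cubes intersect, then
\[
\|R_{\mathbf p,q}-R_{\mathbf r,s}\|<\delta_{q,s}:=2\frac{\psi(q)}{q}+\eta_0\frac{\psi(s)}{s}.
\]
Lemma~\ref{l:1-resonant-1} then yields $\mathbf k$ with $|\mathbf k+(s-q)\boldsymbol\theta|<qs\delta_{q,s}\ll s\psi(q)+q\psi(s)$, and $\mathbf k$ determines $\mathbf p\bmod q$. Hence
\[
N_<(M)\le\sum_{q\in\mathcal Q_M}\ \sum_{M^a\le s\le M}\#\{\mathbf k\in\Z^m:|\mathbf k+(s-q)\boldsymbol\theta|<qs\delta_{q,s}\}.
\]
Fix a small parameter $t_0\in(0,1)$, to be sent to $0$ at the end. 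We split the pairs $(q,s)$ according to whether $qs\delta_{q,s}\ge t_0$ or $qs\delta_{q,s}<t_0$.

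\emph{Step 3: pairs with $qs\delta_{q,s}\ge t_0$.} Here the number of $\mathbf k$ is $\ll_{t_0}(s\psi(q)+q\psi(s))^m$. We estimate the two pieces separately.
\begin{itemize}
\item Summing $(s\psi(M/2))^m$ over $s\le M$ and $q\in\mathcal Q_M$ gives $\ll \frac{M^{m+2}\psi(M/2)^m}{\log M}$. This is $o\bigl(M^{m+1}/\log M\bigr)$ because $M\psi(M)^m\le M^{1-m(\tau_\psi-\varepsilon)}\to0$ by \eqref{eq:1-upper}.
\item Summing $(M\psi(s))^m$ gives $\ll\frac{M^{m+1}}{\log M}\sum_{s\ge M^a}\psi(s)^m$. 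This is also $o\bigl(M^{m+1}/\log M\bigr)$, since $\sum_s\psi(s)^m<\infty$ and we are summing only its tail.
\end{itemize}

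\emph{Step 4: pairs with $qs\delta_{q,s}<t_0$.} Now there is at most one $\mathbf k$ for each pair. For $m\ge2$ the trivial bound of $M$ choices of $s$ per $q$ gives $O(M^2/\log M)=o(M^{m+1}/\log M)$. The case $m=1$ is the main obstacle, since this trivial count is then of the same order as the total. We treat it according to $\theta$.
\begin{itemize}
\item If $\theta\in\Z$, then $k$ is forced to be $-(s-q)\theta$, and the congruence $k\equiv sp \pmod q$ pins $p$ to a single residue class. This gives at most one $p$ per $q$.
\item If $\theta=b/D\notin\Z$, take $t_0<1/D$. All such $s$ lie in the set $\mathcal S$ of Lemma~\ref{l:1-resonant-2}, so at most one $p$ per $q$ is removed. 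Since $q$ is prime and $q>D$, the lemma applies.
\item In both rational cases the contribution is therefore $O(M/\log M)$.
\item If $\theta\notin\Q$, a contributing $s$ must satisfy $\|(s-q)\theta\|<t_0$. By the general Denjoy--Koksma inequality (Lemma~\ref{lem2.3}) with $K=M$, the number of such $s$ is at most $4Mt_0+4M/Q_\theta(M)$.
\end{itemize}
Summing the irrational bound over $q$ gives $\ll \frac{M^2}{\log M}\bigl(t_0+1/Q_\theta(M)\bigr)$. Its $\limsup$ relative to $M^2/\log M$ is $\ll t_0$, because $Q_\theta(M)\to\infty$.

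\emph{Conclusion.} Combining Steps 1--4 gives $\limsup_{M\to\infty}N_<(M)\log M/M^{m+1}\ll t_0$. Letting $t_0\to0$ proves the claim. The final assertion for fixed $L$ is immediate, since $L^m$ is a fixed constant.
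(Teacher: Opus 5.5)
Your argument is correct and follows the paper's proof closely. It uses the same exclusion of $s=q$, the same reduction via Lemma~\ref{l:1-resonant-1} to counting $\mathbf k$ for coprime $(q,s)$, and for $m=1$ the same appeal to Lemma~\ref{l:1-resonant-2} in the rational case and to the Denjoy--Koksma bound (Lemma~\ref{lem2.3}) in the irrational case.

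The only difference is the decomposition. You split according to whether the window $qs\delta_{q,s}$ exceeds a threshold $t_0$, which you should take $<1/2$ so that $\mathbf k$ is unique and Lemma~\ref{lem2.3} applies, and then remove the resulting $O(t_0)$ loss with a $\limsup$ as $t_0\to0$. The paper instead cuts at $s=M/\log M$: for larger $s$ the window is uniformly $O\bigl(M(M/\log M)^{-\tau_\psi+\varepsilon}\bigr)=o(1)$, which gives an explicit $o(\cdot)$ bound with no limiting step.
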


\begin{proof}
We are aimed at counting the number of the set: \begin{align*}
 {\mathcal{B}ad}&:= \Big\{A_\eta(\mathbf p,q)\in \mathscr C_M^\eta: A_\eta(\mathbf p,q)\cap D_{\eta_0}(\mathbf r,s)\ne\emptyset, \ {\text{for some}}\ M^a\le s\le M, {\text{and}}\ \mathbf r \bmod s\Big\}\\
&\subset  \bigcup_{q\in \mathcal Q_M}\bigcup_{M^a\le s<M}\Big\{A_\eta(\mathbf p,q)\in \mathscr C_M^\eta: A_\eta(\mathbf p,q)
\cap D_{\eta_0}(\mathbf r,s)\ne\emptyset, \ {\text{for some}}\ \mathbf r \bmod s\Big\}\\
&:=\bigcup_{q\in \mathcal Q_M}\bigcup_{M^a\le s<M}{\mathcal{B}ad}(q,s).
\end{align*}

 For any $q\in\mathcal Q_M$, the exact cube $A_\eta(\mathbf p,q)$ is disjoint from all dangerous cubes $D_{\eta_0}(\mathbf r,s)$ with $s=q$:
\begin{itemize}
  \item if $\mathbf r=\mathbf p$, this is clear by the definition of $A_\eta(\mathbf p,q)$;
  \item if $\mathbf r\ne \mathbf p$, the resonant points $R_{\mathbf r, q}$ and $R_{\mathbf p, q}$ are at least $1/q$ separated.
\end{itemize}

So we need only care about those ${\mathcal{B}ad}(q,s)$ for $s\ne q$. Since $q$ is prime and $q>M/2$, we have $s\le M<2q$. Thus, if $s\ne q$, then $q\nmid s$, and hence
$
(q,s)=1.
$

Fix $q$ and $s$ with $(q,s)=1$. Assume that
\[
A_\eta(\mathbf p,q) \cap D_{\eta_0}(\mathbf r,s) \ne\emptyset.
\]
Then,
\begin{equation}\label{eq:12-reso-close}\|R_{\bp,q}-R_{\br,s}\|<\eta_0\frac{\psi(s)}{s}+\frac{\psi(q)}{q}<\frac{\psi(s)}{s}+\frac{\psi(q)}{q}.\end{equation}
By Lemma~\ref{l:1-resonant-1}, there exists $\mathbf k\in\Z^m$ satisfying
\begin{equation}\label{eq:12-k}
\left|\mathbf k+(s-q)\boldsymbol\theta\right|< q\psi(s)+s\psi(q),
\end{equation}
and
\begin{equation*}
\mathbf k \equiv s\mathbf p \pmod q, \ \ \  \mathbf k \equiv -q\mathbf r \pmod s.
\end{equation*}
Since $(q,s)=1$, every such $\mathbf k$ determines uniquely $\mathbf p\bmod q$ and $\mathbf r\bmod s$. Then we are led to count the number of possible $\mathbf k$ satisfying (\ref{eq:12-k}).

We note that if $s$ is small such that $$
q\psi(s)+s\psi(q)\gg 1,
$$ there will be multiple choices of $k$ by (\ref{eq:12-k}), otherwise there is at most one. So the argument will be divided into cases according to $s$ small or large (not necessary for $m\ge 2$).

\medskip

\noindent{\bf Case 1: $m\ge 2$.} For any fixed $q$ and $s$, the inequality \eqref{eq:12-k} restricts $\mathbf k$ to a cube of side length less than $ q\psi(s)+s\psi(q)$. Hence,
\begin{equation*}
\#\mathcal{B}ad(q,s)\ll (q\psi(s)+s\psi(q)+1)^m\ll q^m\psi(s)^m+s^m\psi(q)^m+1.
\end{equation*}
As a result,\begin{align*}
\#\mathcal{B}ad &\ll \sum_{q\in\mathcal Q_M}\sum_{M^a\le s<M}\left(q^m\psi(s)^m+s^m\psi(q)^m+1\right)\nonumber\\
&\ll_m\frac{M^{m+1}}{\log M}\sum_{s\ge M^a}\psi(s)^m+M^{m+1}\sum_{q\in\mathcal Q_M}\psi(q)^m+\frac{M^2}{\log M}\nonumber\\
&=o\left(\frac{M^{m+1}}{\log M}\right),
\end{align*}
where the last assertion is because of the facts
\[
\sum_{s\ge M^a}\psi(s)^m \rightarrow0, \ \ \ \ \sum_{q\in\mathcal Q_M}\psi(q)^m\le \sum_{q\in\mathcal Q_M}q^{-m(\tau_\psi-\varepsilon)}=O(M^{-2\varepsilon}), \ \ {\text{and}}\ m\ge 2.
\]

\noindent{\bf Case 2: $m=1$.} The difference between $m=1$ and $m\ge2$ is that the error term $M^2/\log M$ cannot be controlled by $M^{m+1}/\log M$ when $m=1$.

We first restrict our attention to $\mathcal{B}ad(q,s)$ with $M^a\le s\le \frac{M}{\log M}$.
By the same argument as in case 1, we have
$$\#\mathcal{B}ad(q,s)\le  q\psi(s)+s\psi(q)+1=O(M\psi(s)+1). $$
Thus, the number of exact cubes in $\mathcal{B}ad$ contributed by those $(q, s)$ is bounded by
\begin{align*}
\le &\ \sum_{q\in \mathcal Q}\sum_{M^a\le s\le \frac{M}{\log M}}\#\mathcal{B}ad(q,s)
\ll \frac M{\log M}\sum_{M^a\le s\le \frac{M}{\log M}}\left(M\psi(s)+1\right)\nonumber\\
\ll &\ \frac{M^2}{\log M}\sum_{s\ge M^a}\psi(s)+\frac{M^2}{(\log M)^2}=o\left(\frac{M^2}{\log M}\right).
\end{align*}

It remains to consider $\mathcal{B}ad(q,s)$ for $\frac{M}{\log M}<s<M$.
\begin{itemize}
\item If $\theta\in\mathbb{Q}.$ Write $\theta=b/D$ in the lowest term. Recall \eqref{eq:12-reso-close} that
\begin{equation*}
\|R_{\bp,q}-R_{\br,s}\|<\frac{\psi(s)}{s}+\frac{\psi(q)}{q}:=\delta.
\end{equation*}
For any sufficiently large $M$, $q\in \mathcal Q_M$ and ${M}/{\log M}<s<M$, we have
\begin{equation}\label{eq:12-qsd}
qs\delta=q\psi(s)+s\psi(q)\le M \bigg(\frac{M}{\log M}\bigg)^{-\tau_{\psi}+\varepsilon}
+M \bigg(\frac{M}{2}\bigg)^{-\tau_{\psi}+\varepsilon}<\frac{1}{D}.
\end{equation}
Therefore, Lemma~\ref{l:1-resonant-2} implies that for each $q\in\mathcal Q_M$,
$$
\#\left(\bigcup_{\frac{M}{\log M}<s<M}\mathcal{B}ad(q,s)\right)\le 1,
$$ and so
\begin{align*}
\#\left(\bigcup_{q\in \mathcal Q}\bigcup_{\frac{M}{\log M}<s<M}\mathcal{B}ad(q,s)\right)
&\le \sum_{q\in \mathcal Q}1=O\left(\frac M{\log M}\right)=o\left(\frac{M^2}{\log M}\right).
\end{align*}
\item If $\theta\notin\mathbb{Q}$. By the estimation given in (\ref{eq:12-qsd}), the inequality (\ref{eq:12-k}) reduces to
$$\left|k+(s-q)\theta\right|\le 2 M\bigg(\frac{M}{\log M}\bigg)^{-\tau_\psi+\varepsilon}<1/2.$$
This implies that if \[
A_\eta(p,q)\cap D_{\eta_0}(r,s)\ne\emptyset,
\] for some $M/\log M<s<M$, there is a unique choice of $k$ (so uniquely $p\mod q$ and $r\mod s$) and also $s$ satisfies
\[
\|(q-s)\theta\|\le2 M\bigg(\frac{M}{\log M}\bigg)^{-\tau_\psi+\varepsilon}.
\]
Applying Lemma~\ref{lem2.3} (Denjoy--Koksma inequality) to $h=q-s$ and $K=M$ gives
\[
\#\left\{\frac{M}{\log M}<s<M: \|(q-s)\theta\|\le 2 M \bigg(\frac{M}{\log M}\bigg)^{-\tau_\psi+\varepsilon}\right\}\ll 2 M^2\bigg(\frac{M}{\log M}\bigg)^{-\tau_\psi+\varepsilon}+\frac{M}{Q_\theta(M)}.
\]
Hence, the total number of $\mathcal{B}ad(q,s)$ in this range is
\begin{equation*}
\#\left(\bigcup_{q\in \mathcal{Q}}\bigcup_{\frac{M}{\log M}\le s\le M}\mathcal{B}ad(q,s)\right)\ll\frac M{\log M}\left(M^2\bigg(\frac{M}{\log M}\bigg)^{-\tau_\psi+\varepsilon}+\frac{M}{Q_\theta(M)}\right)=o\left(\frac{M^2}{\log M}\right),
\end{equation*}
since $\tau_{\psi}-\varepsilon>1$ and $Q_\theta(M)\to\infty.$
\end{itemize}

\vskip -15pt\end{proof}

\section{Proof of Proposition \ref{p:1-tran}($b$)}

We show that, after removing only a negligible number of resonant points from $\mathscr R_M$, the remaining resonant points are $M^{-\rho}$-separated.  This will in turn give the required separation between the corresponding exact cubes.

Recall from \eqref{eq:1-rho} that
\begin{equation*}
1+\frac1m<\rho<\min\left\{1+\tau_\psi-\varepsilon,\,\frac1m+\frac{m(1+\tau_\psi+2\varepsilon)}{m+1}\right\}.
\end{equation*}

Denote $E_M$ the collection of resonant points in $\mathscr R_M$ which is $M^{-\rho}$ close to some other resonant points in $\mathscr R_M$, i.e.
$$
E_M=\Big\{R_{\mathbf p,q}\in \mathscr R_M: \exists \ R_{\mathbf p',q'}\in \mathscr R_M \text{ such that } R_{\mathbf p,q}\ne R_{\mathbf p',q'} \text{ and } \|R_{\mathbf p,q}-R_{\mathbf p',q'}\|<M^{-\rho}\Big\}.
$$
\begin{proposition} Let $E_M$ be defined as above.
Then,
\begin{equation}\label{eq:13-num}
\# E_M=o\left(\frac{M^{m+1}}{\log M}\right).
\end{equation}
Consequently, for any fixed $L>0$, if $M$ is sufficiently large, by removing at most
	\[
	o\left(L^m\frac{M^{m+1}}{\log M}\right)
	\]
resonant points in $\mathscr R_M$, the remaining resonant points are $M^{-\rho}$-separated, and then the corresponding exact cubes are pairwise $\gg M^{-\rho}$ separated.
\end{proposition}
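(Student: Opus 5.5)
The count will go through Lemma~\ref{l:1-resonant-1}, organised exactly as in the proof of Proposition~\ref{p:1-int}. First I dispose of pairs with the same denominator. If $q'=q$ and $R_{\mathbf p,q}\ne R_{\mathbf p',q}$, then $\|R_{\mathbf p,q}-R_{\mathbf p',q}\|\ge 1/q\ge 1/M>M^{-\rho}$, so such pairs never put a point into $E_M$. Now take two distinct primes $q,q'\in\mathcal Q_M$. They are coprime, so every point of $E_M$ comes from some pair $(q,q')$ with $(q,q')=1$ and $\|R_{\mathbf p,q}-R_{\mathbf p',q'}\|<M^{-\rho}$. Lemma~\ref{l:1-resonant-1} with $\delta=M^{-\rho}$ then gives $\mathbf k\in\Z^m$ with
\[
|\mathbf k+(q'-q)\boldsymbol\theta|<qq'M^{-\rho}\le M^{2-\rho}.
\]
Since the pair is coprime, $\mathbf k$ determines $\mathbf p\bmod q$. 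Hence $\#E_M$ is at most the number of triples $(q,q',\mathbf k)$ satisfying this inequality.

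\textbf{Case $m\ge 2$.} For fixed $(q,q')$ the vector $\mathbf k$ lies in a cube of side $2M^{2-\rho}$, so there are at most $(M^{2-\rho}+1)^m$ choices. Summing over $q,q'$ gives
\[
\#E_M\ll \frac{M^2}{(\log M)^2}\bigl(M^{m(2-\rho)}+1\bigr).
\]
I need this to be $o(M^{m+1}/\log M)$. The term $M^{2+m(2-\rho)}/(\log M)^2$ is small enough exactly when $2+2m-m\rho\le m+1$, i.e.\ $\rho\ge 1+1/m$, and this holds strictly by \eqref{eq:1-rho}. The term $M^2/(\log M)^2$ is small because $m\ge 2$.

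\textbf{Case $m=1$.} Now $2-\rho<1$, since $\rho>2$. So for large $M$ we have $M^{2-\rho}<1/2$, and at most one $k$ exists for each pair $(q,q')$. The problem is that the trivial count $M^2/(\log M)^2$ is \emph{not} $o(M^2/\log M)$ only by a factor $1/\log M$ — actually it is, since $M^2/(\log M)^2=o(M^2/\log M)$. To be safe I still use the inhomogeneous structure, copying the $m=1$ step of Proposition~\ref{p:1-int}.
\begin{itemize}
\item If $\theta=b/D$, Lemma~\ref{l:1-resonant-2} applies once $qq'M^{-\rho}<1/D$. It gives at most one bad $p$ for each $q$, hence $O(M/\log M)$ bad points in total.
\item If $\theta\notin\Q$, a bad pair forces $\|(q'-q)\theta\|<M^{2-\rho}$. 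Lemma~\ref{lem2.3} with $K=M$ bounds the number of such $q'$ by $\ll M^{3-\rho}+M/Q_\theta(M)=o(M)$.
\end{itemize}
Either way $\#E_M=o(M^2/\log M)$.

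Finally, $\mathscr R_M$ has $\asymp M^{m+1}/\log M$ points, and the bound above is a fixed $o(\cdot)$ of that. So for fixed $L$ and large $M$, $\#E_M=o(L^mM^{m+1}/\log M)$. After deleting $E_M$, the surviving resonant points are $M^{-\rho}$-separated. Their exact cubes are translates of one another by $O(\psi(q)/q)$, which is $o(M^{-\rho})$ because $\rho<1+\tau_\psi-\varepsilon$ together with \eqref{eq:1-regular}. So the cubes are $\gg M^{-\rho}$-separated. The step I expect to be the real constraint is the exponent bookkeeping in the $m\ge2$ case: it uses the lower bound $\rho>1+1/m$, while the translation step uses the upper bound $\rho<1+\tau_\psi-\varepsilon$.
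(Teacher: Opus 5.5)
Your argument is correct and is essentially the paper's proof. Points with the same denominator are $1/q$-separated, and for distinct primes Lemma~\ref{l:1-resonant-1} reduces the count to vectors $\mathbf k$ in a cube of side $O(M^{2-\rho})$, giving $\#E_M\ll \frac{M^2}{(\log M)^2}\bigl(M^{m(2-\rho)}+1\bigr)$; the paper applies this one bound for all $m\ge1$, since for $m=1$ the second term is already $o(M^2/\log M)$, as you noticed. So your rational/Denjoy--Koksma analysis is unnecessary, and a small slip: in the final step, $\psi(q)/q\ll M^{-(1+\tau_\psi-\varepsilon)}$ follows from \eqref{eq:1-upper}, not \eqref{eq:1-regular} (which only covers $t\ge M$).
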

\begin{proof}
We divide the proof into two cases.
\begin{itemize}
\item Resonant points with the same denominator $q$. If $R_{\mathbf p,q}\neq R_{\mathbf p',q},$ then $\mathbf p\not\equiv\mathbf p'\pmod q$, and therefore
\[
\|R_{\mathbf p,q}-R_{\mathbf p',q}\|\ge\frac1q\gg M^{-1}>M^{-\rho},
\]
i.e. they cannot be $M^{-\rho}$-close to each other.

\item Resonant points with different denominators. Assume that $q,q'\in\mathcal Q_M$, with $q\neq q'$ and
\[
\|R_{\mathbf p,q}-R_{\mathbf p',q'}\|<M^{-\rho}.
\]
By Lemma~\ref{l:1-resonant-1}, there exists $\bk\in\mathbb{Z}^m$ such that
\begin{equation}\label{eq:13-close}
\left|\mathbf k+(q'-q)\boldsymbol\theta\right|\ll M^{2-\rho},
\end{equation}
since $q,q'\asymp M$. Moreover,
\begin{equation*}
\mathbf k\equiv q'\mathbf p\pmod q,\ \ \ \  \mathbf k \equiv -q\mathbf p'\pmod{q'}.
\end{equation*}
Since the denominators $q$ and $q'$ are distinct primes, we have $(q,q^{\prime})=1$.
Thus, every vector $\mathbf k$ determines uniquely both residue classes
\[
\mathbf p\bmod q\ \ \ \text{and}\ \ \  \mathbf p'\bmod q'.
\]
By \eqref{eq:13-close}, the possible vectors $\mathbf k$ lie in an $m$-dimensional cube of side length $O(M^{2-\rho})$.  Hence their number is
\[
O_m\left((M^{2-\rho}+1)^m\right)\ll_m M^{m(2-\rho)}+1.
\]
Summing over all pairs $(q, q')$ of distinct denominators gives
\[
\# E_M\ll_m \frac{M^2}{(\log M)^2}\left(M^{m(2-\rho)}+1\right).
\]
Since $\rho>1+1/m$, it is easy to see that, as $M\rightarrow\infty$
\begin{align*}
\frac{M^{2+m(2-\rho)}/(\log M)^2}{M^{m+1}/\log M}=\frac{M^{1+m-m\rho}}{\log M}\rightarrow0, \ \  \ \frac{M^2/(\log M)^2}{M^{m+1}/\log M}=\frac{M^{1-m}}{\log M}\rightarrow0.
\end{align*}
This proves \eqref{eq:13-num}.
\end{itemize}

Finally, for $q\in\mathcal Q_M$, the distance from any point of $A_\eta(\mathbf p,q)$ to its resonant point is at most
\[
\eta\frac{\psi(q)}q+\ell_M(\eta)\ll\frac{\psi(q)}{q}\ll M^{-(1+\tau_\psi-\varepsilon)}=o(M^{-\rho}).
\]
Since their resonant points are $M^{-\rho}$-separated, so are the corresponding exact cubes.
\end{proof}

\section{Proof of Proposition \ref{p:1-tran}($c$)}\label{s:1-large}

We turn now to the final selection step: all but a negligible number of exact cubes in $\mathscr C_M^\eta$ are $(M,\eta,\delta)$-good.

Note that the total measure of cubes in $\mathscr C_M^\eta$ is
\begin{equation}\label{eq:14-mass}
S_M(\eta):=\sum_{A_\eta(\mathbf p,q)\in\mathscr C_M^\eta}\mathcal L_m(A_\eta(\mathbf p,q))\asymp (1-\eta)^m\frac M{\log M}\psi(M)^m.
\end{equation}
We shall prove that the total measure $\mathcal I_M(\eta)$ of the intersections between the exact cubes in $\mathscr{C}_M^\eta$ and the dangerous cubes with denominators $s>M$ is $o(S_M(\eta))$ as $M\to\infty$.  This enables us to conclude that only a negligible number of exact cubes in $\mathscr C_M^\eta$ fail to be $(M,\eta,\delta)$-good.

We divide $\mathcal I_M(\eta)$ into two parts according to whether $q$ divides $s$ or not, so define
\begin{align}\label{eq:14-nm}
\mathcal I_M^{(1)}(\eta)&:=\sum_{A_\eta(\mathbf p,q)\in\mathscr C_{M}^\eta}\sum_{\substack{s>M\\ q\mid s}}\sum_{\mathbf r\bmod s}\mathcal L_m\left(A_\eta(\mathbf p,q)\cap D_\eta(\mathbf r,s)\right),\notag\\
\mathcal I_M^{(2)}(\eta)&:=\sum_{A_\eta(\mathbf p,q)\in	\mathscr C_{M}^\eta}\sum_{\substack{s>M\\ q\nmid s}}\sum_{\mathbf r\bmod s}\mathcal L_m\left(A_\eta(\mathbf p,q)\cap D_\eta(\mathbf r,s).\right).
\end{align}
In the next two lemmas, we show separately that $$
\mathcal I_M^{(1)}(\eta)=o(S_M(\eta))\quad\text{and}\quad \mathcal I_M^{(2)}(\eta)=o(S_M(\eta)).
$$

\begin{proposition} With $\mathcal I_M^{(1)}(\eta)$ defined above, we have
\begin{equation*}
\mathcal I_M^{(1)}(\eta)=o(S_M(\eta))\text{ as }M\to\infty.	
\end{equation*}
\end{proposition}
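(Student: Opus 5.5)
The plan is to split the sum defining $\mathcal I_M^{(1)}(\eta)$ according to the multiplier $k:=s/q$. Since $q\mid s$ and $s>M\ge q$, every term has $s=kq$ with $k\ge 2$. Fix an arbitrary integer $K\ge 2$. We treat the small multipliers $2\le k\le K$ and the large multipliers $k>K$ separately, and at the end let $K\to\infty$.

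\emph{Small multipliers.} For $k\le K$ we invoke Lemma~\ref{l:1-resonant-3} with this $K$. Every $q\in\mathcal Q_M$ satisfies $q>M/2$, so for all sufficiently large $M$ (depending on $K$, $\eta$, $\boldsymbol\theta$) we have
\[
A_\eta(\mathbf p,q)\cap D_\eta(\mathbf r,kq)=\emptyset
\]
for every $\mathbf p$, $\mathbf r$ and $2\le k\le K$. So this part of $\mathcal I_M^{(1)}(\eta)$ vanishes for large $M$.

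\emph{Large multipliers.} For $k>K$ we use a crude bound that discards the geometry of the exact cubes. For fixed $q\in\mathcal Q_M$, the exact cubes $A_\eta(\mathbf p,q)$, $\mathbf p\bmod q$, are pairwise disjoint: they lie within $\psi(q)/q=o(1/q)$ of the $1/q$-separated points $R_{\mathbf p,q}$. Hence
\[
\sum_{\mathbf p\bmod q}\sum_{\mathbf r\bmod kq}\mathcal L_m\bigl(A_\eta(\mathbf p,q)\cap D_\eta(\mathbf r,kq)\bigr)\le\sum_{\mathbf r\bmod kq}\mathcal L_m\bigl(D_\eta(\mathbf r,kq)\bigr)\ll (kq)^m\Bigl(\frac{\psi(kq)}{kq}\Bigr)^m=\psi(kq)^m .
\]
Since $M$ is a regular scale and $kq\ge M$, \eqref{eq:1-regular} gives
\[
\psi(kq)\le\psi(M)\Bigl(\frac{kq}{M}\Bigr)^{-(\tau_\psi-\varepsilon)}\le 2^{\tau_\psi}\psi(M)\,k^{-(\tau_\psi-\varepsilon)} .
\]
Summing over $k>K$ and $q\in\mathcal Q_M$ then yields
\[
\mathcal I_M^{(1)}(\eta)\ll \#\mathcal Q_M\,\psi(M)^m\sum_{k>K}k^{-m(\tau_\psi-\varepsilon)}\ll\frac{M}{\log M}\,\psi(M)^m\sum_{k>K}k^{-m(\tau_\psi-\varepsilon)} .
\]
The series converges because $m(\tau_\psi-2\varepsilon)>1$.

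\emph{Conclusion.} Comparing with \eqref{eq:14-mass}, and noting that $(1-\eta)^m$ is a fixed positive constant, we obtain
\[
\limsup_{M\to\infty}\frac{\mathcal I_M^{(1)}(\eta)}{S_M(\eta)}\ll_{\eta,m}\sum_{k>K}k^{-m(\tau_\psi-\varepsilon)} .
\]
Letting $K\to\infty$ gives $\mathcal I_M^{(1)}(\eta)=o(S_M(\eta))$.

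The only delicate point is the range $k\le K$. There the dangerous cubes $D_\eta(\mathbf r,kq)$ have radius comparable to that of the exact cube itself, so volume counting gives no gain, and we genuinely need the arithmetic input of Lemma~\ref{l:1-resonant-3}. That lemma handles both the case $(k-1)\boldsymbol\theta\notin\Z^m$ and the case $(k-1)\boldsymbol\theta\in\Z^m$. The tail $k>K$ is routine once the regular-scale decay of $\psi$ is used.
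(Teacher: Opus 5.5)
Your proof is correct and follows the paper's argument: you write $s=kq$ with $k\ge 2$, dispose of $2\le k\le K$ with Lemma~\ref{l:1-resonant-3}, control $k>K$ through the regular-scale bound $\psi(kq)\ll\psi(M)k^{-(\tau_\psi-\varepsilon)}$, and then let $K\to\infty$. The only difference is in the tail: the paper bounds, for each single exact cube, the number $O\bigl((kq\ell_M(\eta)+1)^m\bigr)$ of level-$kq$ dangerous cubes it meets, which gives a per-cube relative overlap bound; you instead use disjointness of the cubes $A_\eta(\mathbf p,q)$ for fixed $q$ to bound the total overlap by the total dangerous mass $\ll\psi(kq)^m$, which is simpler and suffices because only the aggregate $\mathcal I_M^{(1)}(\eta)$ is needed.
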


\begin{proof}Recall Lemma~\ref{l:1-resonant-3} that for every fixed $K$, for all sufficiently large $M$, one has
\begin{equation}\label{eq:14-m-empty}
A_\eta(\mathbf p,q)\cap D_\eta(\mathbf r,kq)=\emptyset, \ \ {\text{for all}}\ \ A_\eta(\mathbf p,q)\in\mathscr C_M^\eta, \ k\le K, \ \mathbf r \bmod s.
\end{equation}

Since $m(\tau_\psi-\varepsilon)>1$, for any $\delta>0$, we choose $K$ such that $$
 \sum_{k>K}k^{-m(\tau_\psi-\varepsilon)}+k^{-m(1+\tau_\psi-\varepsilon)}<\delta.
$$ Then choose $M$ such that (\ref{eq:14-m-empty}) is valid. So the contribution in $\mathcal I_M^{(1)}(\eta)$ only happens for those $s=kq$ with $k>K$.

Fix an exact cube $A_\eta(\mathbf p,q)\in\mathscr C_M^\eta$ which has side length $\ell_M(\eta).$ For any $k> K$, the centers
$$
\frac{\mathbf r+\boldsymbol\theta}{kq}, \ \  \mathbf r\bmod kq
$$ of the dangerous cubes
$\{D_\eta(\mathbf r,kq): \mathbf r\bmod kq\}$,
form a grid with spacing $1/(kq)$ in each coordinate. Hence, $A_\eta(\mathbf p,q)$ can intersect at most
\[
O\bigl((kq\ell_M(\eta)+1)^m\bigr)
\]
such dangerous cubes.
Consequently, we have
\begin{align*}
&\sum_{\mathbf r\bmod kq}\mathcal L_m\left(A_\eta(\mathbf p,q)\cap D_\eta(\mathbf r,kq)\right)
\ll\bigl(kq\ell_M(\eta)+1\bigr)^m\left(\frac{\psi(kq)}{kq}\right)^m.
\end{align*}
Dividing by $\mathcal L_m(A_\eta(\mathbf p,q))=\ell_M(\eta)^m,$ we obtain
\begin{align}
\frac{\displaystyle\sum_{\mathbf r\bmod kq}\mathcal L_m\left(A_\eta(\mathbf p,q)\cap D_\eta(\mathbf r,kq)\right)}{\mathcal L_m(A_\eta(\mathbf p,q))}
&\ll\left(\psi(kq)+\frac{\psi(kq)}{kq\ell_M(\eta)}\right)^m
\ll\psi(kq)^m+\left(\frac{\psi(kq)}{k\psi(M)}\right)^m,\nonumber
\end{align}
where in the last step we used
\[
q\ell_M(\eta)\stackrel{\eqref{eq:1-child-side}}{\asymp}_{\eta}\psi(M).
\]
Since $kq>M$, one has
\[
\psi(kq)\stackrel{\eqref{eq:1-regular}}{\ll}\psi(M)k^{-(\tau_\psi-\varepsilon)}.
\]
Therefore,
\begin{align*}
\frac{\displaystyle\sum_{\mathbf r\bmod kq}\mathcal L_m\left(A_\eta(\mathbf p,q)\cap D_\eta(\mathbf r,kq)\right)}
{\mathcal L_m(A_\eta(\mathbf p,q))}\ll k^{-m(\tau_\psi-\varepsilon)}+k^{-m(1+\tau_\psi-\varepsilon)}.
\end{align*}
Summing over $k> K$ and over $A_\eta(\mathbf p,q)\in\mathscr C_M^\eta$, we complete the proof of the lemma.
\end{proof}

\begin{proposition}
With $\mathcal I_M^{(2)}(\eta)$ defined in \eqref{eq:14-nm}, we have
\begin{equation*}
\mathcal I_M^{(2)}(\eta)=o(S_M(\eta))\text{ as }M\to\infty.
\end{equation*}
\end{proposition}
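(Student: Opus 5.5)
We want $\mathcal I_M^{(2)}(\eta)=o(S_M(\eta))$. The plan is to interchange the order of summation: fix a denominator $s>M$ with $q\nmid s$, and count the pairs $(\mathbf p,\mathbf r)$ for which $A_\eta(\mathbf p,q)\cap D_\eta(\mathbf r,s)\ne\varnothing$. Each such intersection has measure at most $\min\{\ell_M(\eta),2\eta\psi(s)/s\}^m$. Since $q$ is prime and $q\nmid s$, we have $(q,s)=1$. By Lemma~\ref{l:1-resonant-1} applied with $\delta\asymp \psi(q)/q+\psi(s)/s$, every intersecting pair gives a vector $\mathbf k\in\Z^m$ with
\[
|\mathbf k+(s-q)\boldsymbol\theta|\ll s\psi(q)+q\psi(s),
\]
and $\mathbf k$ determines $(\mathbf p\bmod q,\mathbf r\bmod s)$ uniquely. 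So the number of intersecting pairs for fixed $(q,s)$ is at most the number of such lattice points $\mathbf k$.

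I would split the range of $s$ dyadically, $s\in(2^jM,2^{j+1}M]$, writing $S=2^jM$. The first step is to estimate the number of $\mathbf k$ by $\ll (S\psi(q)+M\psi(s)+1)^m$. Multiplying by the intersection measure and summing over $q\in\mathcal Q_M$ and $s\sim S$ gives:
\begin{itemize}
\item The term $(S\psi(M))^m\cdot(\psi(s)/s)^m$. Summed over $s\sim S$ and $q$, this gives $\ll \frac{M}{\log M}\psi(M)^m\cdot S\,\psi(S)^m$. Relative to $S_M(\eta)\asymp\frac M{\log M}\psi(M)^m$, the ratio is $S\psi(S)^m$. By \eqref{eq:1-upper} and $m(\tau_\psi-\varepsilon)>1$, this decays geometrically in $j$ and is small once $M$ is large.
\item The term $(M\psi(s))^m(\psi(s)/s)^m$. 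This is even smaller.
\item The term with $1$, i.e.\ at most one $\mathbf k$. Here I would bound the measure by $\ell_M^m\asymp(\psi(M)/M)^m$ when $\psi(S)/S\gtrsim\ell_M$, and by $(\psi(S)/S)^m$ otherwise. Summing $1$ over all $s\sim S$ naively costs a factor $S$, which is too big.
\end{itemize}

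So the main obstacle is the ``$+1$'' term, i.e.\ the regime $S\psi(q)+M\psi(s)<1$, where the number of $\mathbf k$ is at most one but whether $\mathbf k$ exists depends on arithmetic of $(s-q)\boldsymbol\theta$. There I will not sum $1$ over all $s$. Instead I would count $s\sim S$ satisfying
\[
\|(s-q)\boldsymbol\theta\|\ll S\psi(M)+M\psi(S)=:t_S.
\]
\begin{itemize}
\item For $m=1$ with $\theta\notin\Q$, Lemma~\ref{lem2.3} gives $\ll S t_S+S/Q_\theta(S)$ such $s$.
\item For $\theta$ rational with denominator $D$, the condition forces $(s-q)\theta\in\Z$ once $t_S<1/D$. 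Arguing as in Lemma~\ref{l:1-resonant-2} and Lemma~\ref{l:1-resonant-3}, the resulting $\mathbf k$ pins $\mathbf p$ to a single class mod $q$, so one loses at most $O(1)$ cubes per $q$. More precisely, one intersecting cube per $(q,S)$ with measure $\le\ell_M^m$; summed over $j$ this must be compared with $q^m\ell_M^m$ per $q$, and it is negligible since $q^m\to\infty$.
\item For $m\ge2$, I would simply use that $\#\{\mathbf k\}\le (t_S+1)^m$ and average over $q$ instead: the number of pairs $(q,s)$ with $\|(s-q)\boldsymbol\theta\|<t$ is, on average over $q$, controlled as in the proof of Proposition~\ref{p:1-int}.
\end{itemize}

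Multiplying these counts by $\min\{\ell_M,\psi(S)/S\}^m$ and using $\psi(S)\le\psi(M)(S/M)^{-(\tau_\psi-\varepsilon)}$ from \eqref{eq:1-regular}, each dyadic block contributes $o(S_M)\cdot 2^{-cj}$, and summing over $j$ finishes the proof. I expect the delicate bookkeeping to be in this last regime for the irrational $m=1$ case: the error $S/Q_\theta(S)$ must be beaten by the factor $\min\{\ell_M,\psi(S)/S\}/\ell_M\cdot\frac1{M}$.
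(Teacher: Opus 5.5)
\textbf{Verdict: genuine gap in the rational case.} Your rational-$\theta$ step does not work as written. Once $2s\psi(q)<1/(2D)$, you correctly get $\mathbf k+(s-q)\boldsymbol\theta=0$, and, as in Lemma~\ref{l:1-resonant-2}, a single admissible class $\mathbf p_0\bmod q$. But you then charge the whole dyadic block $s\sim S$ a single $\ell_M^m$. The quantity $\mathcal I_M^{(2)}(\eta)$ is a \emph{sum} over $s$ and $\mathbf r$. The one cube $A_\eta(\mathbf p_0,q)$ is a candidate for every $s\sim S$ with $D\mid s-q$, roughly $S/D$ of them. If you bound each term honestly by $\min\{\ell_M,2\eta\psi(s)/s\}^m$, then for $m=1$ you get a contribution per $q$ of order $D^{-1}\sum_{M<s\le M^{\tau_\psi-2\varepsilon}}\min\{\ell_M,\psi(s)/s\}$. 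For, e.g., $\psi(s)=s^{-\tau}$ this is of the same order as $q\ell_M(\eta)\asymp\psi(M)$, so the bound does not close.

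The missing idea is that exact coincidence kills these intersections outright. The relation $\mathbf k+(s-q)\boldsymbol\theta=0$ means $R_{\mathbf p,q}=R_{\mathbf r,s}$ on $\T^m$. Since $s>M\ge q$ and $\psi$ is non-increasing, the dangerous ball has radius $\eta\psi(s)/s<\eta\psi(q)/q$. Every point of $A_\eta(\mathbf p,q)$ is at sup-distance at least $\eta\psi(q)/q$ from $R_{\mathbf p,q}$. Hence $A_\eta(\mathbf p,q)\cap D_\eta(\mathbf r,s)=\varnothing$ in this whole range. This is exactly how the paper treats rational $\theta$ for $M<s\le M^{\tau_\psi-2\varepsilon}$. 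Simply discarding the one bad cube per $q$, as your phrase ``lose $O(1)$ cubes per $q$'' suggests, would be enough for the later count of non-good cubes. It does not, however, prove the stated bound on $\mathcal I_M^{(2)}(\eta)$.

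\textbf{Other points.} For $m\ge2$ your worry about the ``$+1$'' term is unfounded. Per $q$ it contributes at most $\sum_{s>M}(2\eta\psi(s)/s)^m\ll\psi(M)^mM^{1-m}$, which is $o\bigl(q^m\ell_M(\eta)^m\bigr)$. So the crude count $\ll s^m\psi(q)^m+1$ of vectors $\mathbf k$ already finishes the case with no arithmetic input on $\boldsymbol\theta$; this is the paper's Case~1. The ``averaging over $q$'' you propose instead is never specified. For a general $\boldsymbol\theta\in\R^m$ it would need Diophantine information that Proposition~\ref{p:1-int} does not supply.

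Arithmetic is needed only for $m=1$, and there your irrational argument is correct. Applying Lemma~\ref{lem2.3} on each dyadic block with $K=S$ gives $\ll t_S\psi(S)+\psi(S)/Q_\theta(S)$ per block, which sums to $o(\psi(M))$. The paper does the same thing with blocks of length $M$ and $K=M$.

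One slip should be corrected throughout. For $q\le M$ you have $\psi(q)\ge\psi(M)$, so $\psi(q)$ cannot be replaced by $\psi(M)$. Instead use $\psi(q)\le q^{-(\tau_\psi-\varepsilon)}\ll M^{-(\tau_\psi-\varepsilon)}$ together with $\psi(S)\le\psi(M)(M/S)^{\tau_\psi-\varepsilon}$. The same geometric decay in $j$ then comes out.
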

\begin{proof}
Suppose that
$s>M,q\nmid s,{\text{ and }}$
$$A_\eta(\mathbf p,q)\cap D_\eta(\mathbf r,s)\neq\emptyset.$$
Since $q$ is prime and $q\nmid s$, we have
$(q,s)=1.$
As in the proof of Proposition~\ref{p:1-int}, we treat the following two cases.

	\noindent{\bf Case 1: $m>1$.} By Lemma \ref{l:1-resonant-1}, the intersection implies
\begin{equation*}
|\mathbf k+(s-q)\boldsymbol\theta|\le q\eta\psi(s)+s\psi(q)\le 2s\psi(q).
\end{equation*}
Hence, for fixed $(q,s)$, the number of possible integer vectors $\mathbf k$ is at most
\[
\ll\left(2s\psi(q)+1\right)^m\ll s^m\psi(q)^m+1.
\]
By Lemma \ref{l:1-resonant-1} and $(q,s)=1$, each such $\mathbf k$ determines uniquely both $\mathbf p\bmod q$ and $\mathbf r\bmod s$. Furthermore, each corresponding intersection gives volume at most that of a dangerous cube, so
\begin{align}
\mathcal I_M^{(2)}(\eta)&\ll_m\sum_{q\in\mathcal Q_M}\sum_{\substack{s>M\\ q\nmid s}}\left(s^m\psi(q)^m+1\right)\left(\frac{\psi(s)}s\right)^m.\nonumber
\end{align}

For $s>M$, by \eqref{eq:1-regular},
\[
\psi(s)\ll\psi(M)\left(\frac{s}{M}\right)^{-(\tau_\psi-\varepsilon)}.
\]
Hence, for each fixed $q\in\mathcal Q_M$,
\begin{align*}
\sum_{s>M}\left(s^m\psi(q)^m+1\right)\left(\frac{\psi(s)}s\right)^m=&\ \psi(q)^m\sum_{s>M}\psi(s)^m+\sum_{s>M}\frac{\psi(s)^m}{s^m}\nonumber\\
\ll &\ \psi(q)^m\psi(M)^m\sum_{s>M}\left(\frac{s}{M}\right)^{-(\tau_\psi-\varepsilon)}+\psi(M)^m\sum_{s>M}s^{-m}\nonumber\\
\ll &\ M\psi(q)^m\psi(M)^m+M^{1-m}\psi(M)^m.
\end{align*}
Moreover, since $q\asymp M$, by
\eqref{eq:1-upper},
\[
\psi(q)\ll M^{-(\tau_\psi-\varepsilon)},
\]
one has
\[
\sum_{s>M}\left(s^m\psi(q)^m+1\right)\left(\frac{\psi(s)}s\right)^m \ll M^{1-m(\tau_{\psi}-\varepsilon)}\psi(M)^{m}+M^{1-m}\psi(M)^m.
\]
Therefore, by $\tau_{\psi}-\varepsilon>1$ and $m>1$,
\[
\mathcal I_M^{(2)}(\eta)\ll\frac M{\log M}\left(M^{1-m(\tau_{\psi}-\varepsilon)}+M^{1-m}\right)\psi(M)^m=o\left(\frac M{\log M}\psi(M)^m\right)=o(S_M(\eta)).
\]

\noindent{\bf Case 2: $m=1$.} For fixed $s>M^{\tau_{\psi}-2\varepsilon}$ and $q\in\mathcal Q_M$, the number of possible integers $k$ is at most
$$
O(s\psi(q)+1).
$$
Since $(q,s)=1$, each such $k$ uniquely determines $p\bmod q$ and $r\bmod s$. Each corresponding intersection has length at most $O(\psi(s)/s)$. Hence, for each fixed $q$,
\begin{align}
\sum_{s>M^{\tau_{\psi}-2\varepsilon}}
O(s\psi(q)+1)\cdot\frac{\psi(s)}{s}
&\ll
\psi(q)\sum_{s>M^{\tau_{\psi}-2\varepsilon}}\psi(s)
+
\sum_{s>M^{\tau_{\psi}-2\varepsilon}}\frac{\psi(s)}{s}.\label{eq:14-s>}
\end{align}

Since $M$ is a regular scale and $M/2\le q<M$, one has
$$\psi(q)\le q^{-\tau_{\psi}+\varepsilon}\ll M^{2\varepsilon}\psi(M).$$
Moreover, $M^{\tau_{\psi}-2\varepsilon}\ge M$. Thus, for $s>M^{\tau_{\psi}-2\varepsilon}$,
\[
\psi(s)\le s^{-\tau_{\psi}+\varepsilon}\qaq\psi(s)\le \psi(M)\left(\frac Ms\right)^{\tau_\psi-\varepsilon}.
	\]
As a consequence,
\begin{align*}
\eqref{eq:14-s>}&\ll M^{2\varepsilon}\psi(M)\sum_{s>M^{\tau_{\psi}-2\varepsilon}}s^{-\tau_{\psi}+\varepsilon}+\psi(M)\sum_{s>M^{\tau_{\psi}-2\varepsilon}}
\left(\frac Ms\right)^{\tau_\psi-\varepsilon}\cdot \frac{1}{s}\\
& \ll M^{2\varepsilon}\psi(M)M^{(\tau_{\psi}-2\varepsilon)(1-\tau_{\psi}+\varepsilon)}+
\psi(M)M^{(\tau_{\psi}-\varepsilon)(1-\tau_{\psi}+2\varepsilon)}\\
&=o(\psi(M)).
\end{align*}
Summing over $q\in\mathcal Q_M$,
we obtain the contribution to $\mathcal I_M^{(2)}(\eta)$ for $s> M^{\tau_{\psi}-2\varepsilon}$ is
\[
o\left(\frac{M}{\log M}\psi(M)\right)=o(S_M(\eta)).
\]

It remains to consider $M<s\le M^{\tau_{\psi}-2\varepsilon}$.

\begin{itemize}
\item If $\theta\in\mathbb{Q}$. Write $\theta=b/D$ in its lowest term. By Lemma~\ref{l:1-resonant-1}, there exists $k\in\Z$ such that
\begin{equation}\label{eq:14-k-rat}
\left|k+(s-q)\frac bD\right|\le 2s\psi(q).
\end{equation}
The choice of $s$ forces the right-hand side of	\eqref{eq:14-k-rat} is less than $1/2D$ and so the left-hand side equal 0. Hence,
$$\left\|\frac{ p+\theta}{q}-\frac{r+\theta}{s}\right\|=\frac{1}{qs}\Big|k+(s-q)\theta\Big|=0.$$
This means that the resonant points $R_{p,q}$ and $R_{r,s}$ coincide on $\T$. Since $s>M\ge q,$ by the monotonicity of $\psi$, it follows that
\[
\eta\frac{\psi(s)}s<\eta\frac{\psi(q)}q.
\] So it is impossible that $A_\eta(\mathbf p,q)\cap D_\eta(\mathbf r,s)\neq\emptyset.$
Therefore, no exact interval in $\mathscr C_M^\eta$ can intersect a dangerous interval with $M<s\le M^{\tau_{\psi}-2\varepsilon}.$

\item If $\theta\notin\mathbb{Q}$.
Fix $q\in\mathcal Q_M$. By Lemma~\ref{l:1-resonant-1} and $M/2< q\le M$, there exists $k\in\Z$ such that
\begin{equation}\label{eq:14-k-irr}
|k+(s-q)\theta|\le 2s\psi(q)\le 2^{1+\tau_{\psi}-\varepsilon}sM^{-\tau_{\psi}+\varepsilon}.
\end{equation}
Note that $(q,s)=1$. So, for any pair $(q, s)$, the integer $k$ determines $p\bmod q$ and $r\bmod s$ uniquely. The right side of \eqref{eq:14-k-irr} is smaller than $1/2$, so $k$ has only one choice and $s$ satisfies $$
\|(s-q)\theta\|\le 2^{1+\tau_{\psi}-\varepsilon}sM^{-\tau_{\psi}+\varepsilon}.
$$
Divide the range of $s$ into consecutive blocks of length at most $M$,
$S\le s<S+M.$
For each block, we apply Lemma~\ref{lem2.3} with $K=M$ which gives
\[
\#\Big\{S\le s<S+M:\|(s-q)\theta\|\le 2^{2+\tau_{\psi}-\varepsilon} SM^{-\tau_{\psi}+\varepsilon}\Big\}\ll SM^{1-\tau_{\psi}+\varepsilon}+\frac{M}{Q_\theta(M)}.
\]
Each corresponding intersection $A_\eta(p,q)\cap D_\eta(r,s)$ has length at most
\[
\ll \frac{\psi(s)}s\le\frac{\psi(S)}S.
\]
Thus, for any fixed $q\in \mathcal{Q}$,
\begin{align*}
&\sum_{p\bmod q}\sum_{\substack{M<s\le M^{\tau_{\psi}-2\varepsilon}\\ q\nmid s}}\sum_{\mathbf r\bmod s}\mathcal L_m\left(A_\eta( p,q)\cap D_\eta(r,s)\right)
\\ \ll &\sum_{\mathrm{blocks}} \left( SM^{1-\tau_{\psi}+\varepsilon}+\frac{M}{Q_\theta(M)}\right)\cdot \left(\frac{\psi(S)}S\right)\\
\le &\sum_{1\le i\le M^{\tau_{\psi}-2\varepsilon-1}}\left(\psi(iM)M^{1-\tau_{\psi}+\varepsilon}+\frac{M}{Q_{\theta}(M)}\cdot \frac{\psi(iM)}{iM}\right)
\end{align*}
By \eqref{eq:1-regular}, one has
\[
\psi(iM)\le\psi(M)\left(\frac 1i\right)^{\tau_{\psi}-\varepsilon}.
\]
The sum over $i$ is convergent, so
\begin{align*}
&\sum_{p\bmod q}\sum_{\substack{M<s\le M^{\tau_{\psi}-2\varepsilon}\\ q\nmid s}}\sum_{\mathbf r\bmod s}\mathcal L_m\left(A_\eta( p,q)\cap D_\eta(r,s)\right)\\
\ll &\psi(M)\left(M^{1-\tau_{\psi}+\varepsilon}+\frac{1}{Q_{\theta}(M)}\right)=o(\psi(M)).
\end{align*}
Hence, summing over $q\in\mathcal Q_M$, the contribution to $\mathcal I_M^{(2)}(\eta)$ of this range is also $o(S_M(\eta)).$
    \end{itemize}
\vskip -17pt\end{proof}

We now convert the measure estimate on $\mathcal I_M(\eta)$ into a counting estimate for exact cubes which fail to be $(M,\eta,\delta)$-good.
\begin{proposition} As $M\to\infty$, the number of exact cubes in $\mathscr C_M^\eta$ which are not $(M,\eta,\delta)$-good is
\[
o\left(\frac{M^{m+1}}{\log M}\right).
\]
In particular, for any fixed $L>0$, this number is
\[
o\left(L^m\frac{M^{m+1}}{\log M}\right).
\]
\end{proposition}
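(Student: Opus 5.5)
This is a Markov-type averaging argument on the bound $\mathcal I_M(\eta)=\mathcal I_M^{(1)}(\eta)+\mathcal I_M^{(2)}(\eta)=o(S_M(\eta))$ from the two preceding propositions. By definition, an exact cube $A=A_\eta(\mathbf p,q)\in\mathscr C_M^\eta$ fails to be $(M,\eta,\delta)$-good exactly when
\[
\mathfrak B_M^\eta(A)=\sum_{s>M}\sum_{\mathbf r\bmod s}\mathcal L_m\bigl(A\cap D_\eta(\mathbf r,s)\bigr)>\delta\,\mathcal L_m(A).
\]
Summing $\mathfrak B_M^\eta(A)$ over all $A\in\mathscr C_M^\eta$ gives precisely the quantity $\mathcal I_M(\eta)$: the splitting according to $q\mid s$ or $q\nmid s$ in \eqref{eq:14-nm} exhausts all $s>M$.

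Let $\mathcal N$ be the set of cubes in $\mathscr C_M^\eta$ that are not $(M,\eta,\delta)$-good. All cubes in $\mathscr C_M^\eta$ share the same side length $\ell_M(\eta)$, so
\[
\delta\,\ell_M(\eta)^m\,\#\mathcal N<\sum_{A\in\mathcal N}\mathfrak B_M^\eta(A)\le\mathcal I_M(\eta)=o\bigl(S_M(\eta)\bigr).
\]
On the other hand, $S_M(\eta)=\#\mathscr C_M^\eta\cdot\ell_M(\eta)^m$, and
\[
\#\mathscr C_M^\eta=\sum_{q\in\mathcal Q_M}q^m\asymp\frac{M^{m+1}}{\log M}.
\]
Dividing by $\delta\,\ell_M(\eta)^m$, with $\delta$ and $\eta$ fixed, yields
\[
\#\mathcal N=o\Bigl(\delta^{-1}\,\#\mathscr C_M^\eta\Bigr)=o\Bigl(\frac{M^{m+1}}{\log M}\Bigr).
\]
Working directly with the identity $S_M(\eta)=\#\mathscr C_M^\eta\,\ell_M(\eta)^m$ avoids having to track the constants hidden in \eqref{eq:14-mass}.

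For the second assertion, $L>0$ is fixed, so $L^m$ is a positive constant independent of $M$, and $o(M^{m+1}/\log M)=o(L^mM^{m+1}/\log M)$ as $M\to\infty$.

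There is no real obstacle here. The only points to check are that the $o(\cdot)$ bounds from the two preceding propositions are uniform in the sense needed: they hold for a fixed $\eta$ as $M\to\infty$ along regular scales, and $\delta$ is fixed before $M$ is sent to infinity. Both conditions match how the statement is used in Proposition~\ref{p:1-tran}(c).
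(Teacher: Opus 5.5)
Your proposal is correct and matches the paper's proof: you use $\mathfrak B_M^\eta(A)>\delta\,\ell_M(\eta)^m$ for each non-good cube to bound their number by $\mathcal I_M(\eta)/(\delta\,\ell_M(\eta)^m)$, then combine $\mathcal I_M(\eta)=o(S_M(\eta))$ with $S_M(\eta)/\ell_M(\eta)^m\asymp M^{m+1}/\log M$. Using the exact identity $S_M(\eta)=\#\mathscr C_M^\eta\,\ell_M(\eta)^m$ instead of \eqref{eq:14-mass} is only a cosmetic simplification.
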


\begin{proof}
Combining the last two propositions together, we obtain
\begin{equation}\label{eq:14-overlap}
\mathcal I_M(\eta)=o(S_M(\eta)).
\end{equation}

If an exact cube $\mathcal C\in\mathscr C_M^\eta$ is not $(M,\eta,\delta)$-good, then
\[
\mathfrak B_M^\eta(\mathcal C)>\delta\mathcal L_m(\mathcal C)=\delta\ell_M(\eta)^m.
\]
Therefore the number of exact cubes in $\mathscr C_M^\eta$ which are not $(M,\eta,\delta)$-good is at most
\[
\frac{\mathcal I_M(\eta)}{\delta\ell_M(\eta)^m}.
\]
By \eqref{eq:14-overlap} and \eqref{eq:14-mass}, it follows
\[
\frac{\mathcal I_M(\eta)}{\delta\ell_M(\eta)^m}=o\left(\frac{S_M(\eta)}{\ell_M(\eta)^m}\right)=o\left(\frac{M^{m+1}}{\log M}\right).
\]
This proves the first assertion.

For any fixed $L>0$, by choosing $M$ sufficiently large, one has
\[
o\left(\frac{M^{m+1}}{\log M}\right)=o\left(L^m\frac{M^{m+1}}{\log M}\right),
\]
which proves the second assertion.
\end{proof}

\section{Completion of the proof: $n=1$}\label{s:1-proof}

We now choose the parameters recursively and construct the Cantor set. Fix $0<\delta<1,$ let $\eta_j=1-2^{-j-1}, j\ge0,$ which tends to 1 as $j\to \infty$. Choose $T_0$ sufficiently large so that $\T^m$ is $(T_0,\eta_0,\delta)$-good, and let $L_0=1,M_0=1.$

Suppose that $M_{j-1},T_{j-1},L_{j-1}$ have already been chosen, we choose $M_j$ satisfies the following,
\begin{itemize}
    \item $M_j$ is a regular scale so that Proposition~\ref{p:1-tran} applies with $$T=T_{j-1},\ \ \ \eta_0=\eta_{j-1},\ \ \  \eta=\eta_j,\ \ \ L=L_{j-1};$$
    \item $M_j$ is sufficiently large so that
    \begin{equation}\label{eq:1-scale-c1}
    M_{j-1}=o(\log M_{j}),\ \ \ 2^{j+1}=o(\log M_j),\ \ \ L_{j-1}^{-1} \le o(\log M_j).
    \end{equation}
\end{itemize}
Define $T_j=M_j, L_j=\ell_{M_j}(\eta_j).$ It is directly to see
\begin{align}\label{eq:1-scale-c2}
L_j=(1-\eta_j)\frac{\psi(M_j)}{M_j}\gg M_j^{-(1+\tau_\psi+2\varepsilon)}, \ \ {\text{and}}\ L_j
\ll M_j^{-(1+\tau_\psi-\varepsilon)}=o(M_j^{-\rho}).
\end{align}

We now construct the Cantor set. Let $\mathcal F_0=\{\T^m\}$ and suppose that $\mathcal F_{j-1}$ has already been constructed for $j\ge 1$.
Every $P\in\mathcal F_{j-1}$ is a cube of side length $L_{j-1}$ and is $(T_{j-1},\eta_{j-1},\delta)$-good.  By Proposition~\ref{p:1-tran}, inside every such parent $P$ there are
at least
\[
c_1L_{j-1}^m\frac{M_j^{m+1}}{2\log M_j}
\]
exact cubes satisfying all the conditions there. We keep exactly
\begin{equation}\label{eq:1-child-num}
b_j:=\left\lfloor c_1L_{j-1}^m\frac{M_j^{m+1}}{2\log M_j}\right\rfloor	
\end{equation}
of them and denote by $\mathcal F_j(P)$ those children of $P$. Let
\[
\mathcal F_j=\bigcup_{P\in\mathcal F_{j-1}}\mathcal F_j(P).
\]
All cubes in $\mathcal{F}_j$ have the same side length $L_j$ and are $(T_j,\eta_j,\delta)$-good, and those of the same parent
are pairwise $\gg M_j^{-\rho}$ separated.
Define
\begin{equation*}
	K=\bigcap_{j=0}^\infty\bigcup_{ Q\in\mathcal F_j}Q.
\end{equation*}

\begin{proposition}\label{p:1-subset} Let $K$ be defined as above, we have
\[
K\subset\Exact_{m,1}(\psi,\boldsymbol\theta).
\]
\end{proposition}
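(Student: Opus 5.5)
Fix $\mathbf x\in K$. We verify the two defining conditions of $\Exact_{m,1}(\psi,\boldsymbol\theta)$ separately, using only the nested structure of the $\mathcal F_j$ and properties (a) and (c) of Proposition~\ref{p:1-tran}.

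\emph{Step 1: infinitely many $\psi$-approximations.} For each $j\ge1$ there is a unique $Q_j\in\mathcal F_j$ with $\mathbf x\in Q_j$. By construction $Q_j=A_{\eta_j}(\mathbf p_j,q_j)$ for some $q_j\in\mathcal Q_{M_j}$. Every point of an exact cube satisfies
\[
\eta_j\psi(q_j)\le|q_j\mathbf x-\mathbf p_j-\boldsymbol\theta|\le\psi(q_j).
\]
The right endpoint is attained only on a boundary face, so we replace $\le$ by $<$ as follows. Since $\ell_{M_j}(\eta_j)=(1-\eta_j)\psi(M_j)/M_j$ and $q_j\ell_{M_j}\le(1-\eta_j)\psi(M_j)\le(1-\eta_j)\psi(q_j)$, the upper bound is really $\eta_j\psi(q_j)+q_j\ell_{M_j}(\eta_j)\le\psi(q_j)$. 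If needed, shrink $\ell_M$ by a factor $1/2$ at the outset; this changes no counting. The $q_j$ are distinct because $M_j\to\infty$ rapidly, so there are infinitely many pairs with $|q\mathbf x-\mathbf p-\boldsymbol\theta|<\psi(q)$.

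\emph{Step 2: only finitely many $c\psi$-approximations for each $c<1$.} Fix $c\in(0,1)$ and choose $j_0$ with $\eta_{j_0-1}>c$, which is possible since $\eta_j\to1$. Suppose $|s\mathbf x-\mathbf r-\boldsymbol\theta|<c\psi(s)$ with $s>T_{j_0-1}=M_{j_0-1}$. Then $s\in(T_{j-1},T_j]$ for a unique $j\ge j_0$, because $T_j=M_j\to\infty$. Since $c<\eta_{j-1}$, we have $\mathbf x\in D_{\eta_{j-1}}(\mathbf r,s)$. But $\mathbf x\in Q_j\in\mathcal F_j(Q_{j-1})$, and Proposition~\ref{p:1-tran}(a), applied with $T=T_{j-1}$, $\eta_0=\eta_{j-1}$ and $M=M_j$, says that $Q_j$ is disjoint from every $D_{\eta_{j-1}}(\mathbf r,s)$ with $T_{j-1}<s\le M_j$. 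This is a contradiction.

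Hence every such $s$ satisfies $s\le T_{j_0-1}$. For each such $s$ only finitely many $\mathbf r$ are possible, because $\mathbf x$ is bounded; in $\T^m$ one takes $\mathbf x$ in a fundamental domain, so $|s\mathbf x-\mathbf r|$ bounded forces finitely many $\mathbf r$. So there are finitely many pairs. One caveat is the distinction between the open ball $D$ and the strict inequality; the two are consistent, since strict $<c\psi(s)<\eta_{j-1}\psi(s)$ puts $\mathbf x$ in the open ball.

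\emph{Main obstacle.} The only delicate point is bookkeeping. We must check that the ranges $(T_{j-1},M_j]$ exhaust all large $s$, which holds since $T_j=M_j$. We must also check that the goodness inherited through (c) is exactly what the next application of Proposition~\ref{p:1-tran} needs, namely that each parent in $\mathcal F_{j-1}$ is $(T_{j-1},\eta_{j-1},\delta)$-good. Both follow directly from the recursive choice of parameters.
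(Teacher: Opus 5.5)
Your proof is correct and is essentially the paper's argument. Membership in $W_{m,1}(\psi,\boldsymbol\theta)$ comes from the exact cubes $A_{\eta_j}(\mathbf p_j,q_j)\ni\mathbf x$ with distinct $q_j$, and any $c\psi$-approximation with $s>T_{j_0-1}$ is excluded by locating $s\in(T_{j-1},T_j]$ and applying Proposition~\ref{p:1-tran}(a). Your extra care about finitely many $\mathbf r$ for small $s$, and about the strict inequality, tightens the paper's sketch; the latter needs no shrinking of $\ell_M$, since Proposition~\ref{p:1-low} only keeps cubes with $R_{\mathbf p,q}\in J_{-1/M}$, so children lie in the interior of their parents and $\mathbf x$ is interior to every $Q_j$.

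One caveat, shared with the paper: you only treat positive denominators, whereas the definition allows $q\in\Z\setminus\{0\}$. Negative $q$ correspond to approximation by the points $(\mathbf r-\boldsymbol\theta)/s$, which the $n=1$ construction does not avoid unless $2\boldsymbol\theta\in\Z^m$. The routine fix is to add these dangerous cubes to the construction, as is done via $s\ne0$ in the case $n\ge2$.
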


\begin{proof}
Let $\mathbf x\in K$.  For each $j\ge 1$, the point $\mathbf x$ belongs to some exact cube $A_{\eta_j}(\mathbf p_j,q_j)\text{ with } q_j\in\mathcal Q_{M_j}.$ Therefore, one has $\bx\in W_{m,1}(\psi,\bta)$.

For any $0<c<1$, choose $j_0$ large that $\eta_{j-1}>$ for all $j\ge j_0.$
For every integer $s>T_{j_0}$, there exists $j\ge j_0+1$ such that $T_{j-1}<s\le T_j.$
By the construction of $\mathcal{F}_j$, every cube in $\mathcal{F}_j$ is disjoint from
$D_{\eta_{j-1}}(\mathbf r,s){\text{ for all }} T_{j-1}<s\le T_j,\mathbf r\in\Z^m.$
Hence, since $\mathbf x$ belongs to some cube in $\mathcal{F}_j$, we have
\[
|s\mathbf x-\mathbf r-\boldsymbol\theta|\ge\eta_{j-1}\psi(s)>c\psi(s), \text{ for every } s\ge T_{j_0} \text{ and } \mathbf r\in\Z^m.\qedhere
\]
\end{proof}

It remains to estimate the Hausdorff dimension of $K$.

\begin{proof}[Proof of Theorem \ref{t:main}: the case $n=1$] We verify that the Cantor set $K$ constructed above satisfies the hypotheses of Lemma~\ref{l:cantor}.

At level $j$, the relevant parameters are side length $L_j$, child numbder $b_j$ and separation $M_j^{-\rho}$. The separation condition in Lemma~\ref{l:cantor}\textup{(1)} is automatically satisfied by the construction of $K$. We therefore verify conditions \textup{(2)} and \textup{(3)}.

\begin{itemize}
\item Condition \textup{(2)} of Lemma~\ref{l:cantor}.
For each $P\in\mathcal F_{j-1},$ and let $I\subset\T^m$ be a cube of side length $r<L_{j-1}.$ For any fixed $q\in\mathcal Q_{M_j}$, the resonant points $\{R_{\mathbf p,q}: \mathbf p\bmod q\}$ are $q^{-1}$-separated, thus a volume estimate shows that
 \[
\#\Big\{A_{\eta_j}(\mathbf p,q): A_{\eta_j}(\mathbf p,q)\cap I\ne \emptyset,\  \mathbf p\bmod q\Big\}\ll(qr+1)^m\ll(M_jr+1)^m.
\]
Summing over $q\in\mathcal Q_{M_j}$, one has
$$\#\Big\{Q\in\mathcal F_j(P): Q\cap I\ne\emptyset\Big\}\ll \frac{M_j}{\log M_j}(M_jr+1)^m\asymp b_j\left(\frac{r+M_j^{-1}}{L_{j-1}}\right)^m.$$

	 \item Condition \textup{(3)} of Lemma~\ref{l:cantor}.
By the definitions of $r_j$ and $b_j$ (see \eqref{eq:1-child-num}), we have
\begin{align}\label{eq:1-rj}
r_j=\frac{b_j^{1/m}M_j^{-(1+\rho)}}{L_{j-1}}\asymp M_j^{1/m-\rho}(\log M_j)^{-1/m}.
\end{align}
Since $\rho>1+1/m$ and $m>0$, we have $r_j=o(M_j^{-1})$, $M_j^{-\rho}=o(r_j).$
Moreover, together with \eqref{eq:1-scale-c2}, we have
\[
M_j^{-1-\tau_{\psi}-2\varepsilon}\ll L_j \ll M_j^{-1-\tau_{\psi}+\varepsilon},
\] it follows trivially that, for all sufficiently large $j$,
\begin{equation*}
L_j\le M_j^{-\rho}\le r_j\le M_j^{-1}<L_{j-1}.
\end{equation*}
\end{itemize}

Now we estimate the two liminfs appearing in the conclusion of Lemma~\ref{l:cantor}.
\begin{itemize}
\item
By \eqref{eq:1-child-num} and \eqref{eq:1-scale-c1},
\[
\log b_j =m\log L_{j-1}+ (m+1)\log M_j-\log\log M_j+O(1)=(m+1-o(1))\log M_j.
\]
By \eqref{eq:1-scale-c2},
\[
-\log L_j\le (1+\tau_\psi+2\varepsilon)\log M_j+O(1),
\]
it then follows  that
\begin{equation}\label{eq:1-first-liminf}
\liminf_{j\to\infty}\frac{\log B_j}{-\log L_j}\ge\frac{m+1}{1+\tau_\psi+2\varepsilon}.
\end{equation}

	 \item
By \eqref{eq:1-scale-c1},
\begin{align*}
	 	\log B_{j-1}+m\log(L_{j-1}M_j)&\ge m\log M_j+m\log L_{j-1}=(m+o(1))\log M_j.
	 \end{align*}
On the other hand, \eqref{eq:1-rj} gives
\begin{equation*}
-\log r_j=\left(\rho-\frac1m+o(1)\right)\log M_j.
\end{equation*}
Hence
\begin{equation}\label{eq:1-second-liminf}
\liminf_{j\to\infty}\frac{\log B_{j-1}+m\log(L_{j-1}M_j)}{-\log r_j}\ge\frac{m}{\rho-1/m},
\end{equation}
which can be very close to $m$ if we let $\rho\to 1+1/m$.
\end{itemize}

Thus Lemma~\ref{l:cantor}, together with \eqref{eq:1-first-liminf} and \eqref{eq:1-second-liminf}, yields
\begin{equation*}
\hdim K\ge\frac{m+1}{1+\tau_\psi+2\varepsilon}.
\end{equation*}

Since $K$ is a subset $\Exact_{m,1}(\psi,\boldsymbol\theta)$, we obtain
\[
\hdim\Exact_{m,1}(\psi,\boldsymbol\theta)\ge\frac{m+1}{1+\tau_\psi+2\varepsilon}.
\]
Letting $\varepsilon\to0$, the desired result follows.
\end{proof}

\section{Preliminaries for Case II: $n\ge2$}\label{s:2-case}

Throughout this section, $n\ge 2$ and $m\ge 1$. Recall the condition that $\tau_{\psi}>n/m$ which ensures
\begin{equation}\label{eq:2-sum-ve}
\sum_{q=1}^{\infty}{q^{n-1}}\psi(q)^m<\infty\qaq m(\tau_\psi-\varepsilon)>n,
\end{equation} for small $\varepsilon>0$.

Our strategy is to construct suitable subsets of $\Exact_{m,n}(\psi,\boldsymbol\theta)$ inside a family of vertical fibers: for $U\in\Mat_{m,n-1}(\T)$,
\begin{align*}\Big\{(U,\mathbf z)\in \Mat_{m,n}(\T): &\ \|U\bu+q\mathbf z-\boldsymbol\theta\|<\psi(|(\bu, q)|), {\text{ i.m.}}\ (\bu, q)\in \mathbb{Z}^n,\\
\ {\text{for any}} \ 0<c&<1, \ \ \|U\bu+q\mathbf z-\boldsymbol\theta\|>c\psi(|(\bu, q)|), \ {\text{for all}}\ (\bu, q), \ {\text{with}}\ |(\bu, q)|\gg 1\Big\}.\end{align*}

Once a fiber $U$ is fixed, the resulting approximation problem is reduced to inhomogeneous $n=1$ case, so many similar arguments. However, the reduction to a fiber does not allow us to apply the $n=1$ argument verbatim: the number of the resonant points
(\ref{eq:2-res-def}) will be increased because of the freedom of $\mathbf u$ and the counting estimates now depend on the chosen
fiber.

After obtaining a uniform lower bound for the Hausdorff dimension on a positive-measure family of fibers, a lower bound for $\hdim \Exact_{m,n}(\psi,\boldsymbol\theta)$ follows by the slicing lemma.

\begin{lemma}[Slicing lemma {\cite[Lemma 4]{BerV}}]\label{l:2-slicing} Let $k,\ell\in\N$ with $\ell<k$, and let $A\subset\R^k$ be a Borel set. For $x\in\R^\ell$, write
\[
A_x:=\left\{y\in\R^{k-\ell}:(x,y)\in A\right\}.
\]
If
$\hdim A_x\ge s$
for a set of $x\in\R^\ell$ of positive $\mathcal H^\ell$-measure, then
\[
\hdim A\ge \ell+s.
\]
\end{lemma}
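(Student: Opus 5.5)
The plan is to prove the Slicing lemma through the classical Eggleston--Marstrand integral inequality for Hausdorff measures. Fix any $s'<s$; it suffices to show $\mathcal H^{\ell+s'}(A)>0$, because then $\hdim A\ge \ell+s'$, and letting $s'\uparrow s$ gives the claim. The key intermediate estimate is
\[
\int^{*}_{\R^\ell}\mathcal H^{s'}(A_x)\dif\mathcal H^\ell(x)\;\le\; C_{\ell}\,\mathcal H^{\ell+s'}(A),
\]
where $\int^*$ denotes the upper integral and $C_\ell$ depends only on $\ell$. Using the upper integral lets us avoid proving measurability of $x\mapsto\mathcal H^{s'}(A_x)$.

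Granting this inequality, the conclusion is immediate. Let $X$ be the set of $x$ with $\hdim A_x\ge s$; by hypothesis $\mathcal H^\ell(X)>0$. For every $x\in X$ we have $\mathcal H^{s'}(A_x)=\infty$, since $s'<s\le \hdim A_x$. Hence the left-hand side is infinite. Note that the upper integral of a function that is $\infty$ on a set of positive outer measure is $\infty$, because any measurable majorant must be $\infty$ on a measurable hull of $X$. So $\mathcal H^{\ell+s'}(A)=\infty>0$.

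To prove the inequality, fix $\delta>0$ and $\epsilon>0$. Choose a countable cover of $A$ by closed cubes $E_i$ (axis-parallel in $\R^k=\R^\ell\times\R^{k-\ell}$) of side $\delta_i\le\delta$ with
\[
\sum_i\delta_i^{\ell+s'}\le C\,\mathcal H^{\ell+s'}_{\delta}(A)+\epsilon.
\]
Passing from arbitrary sets to cubes costs only a dimensional constant. Write $E_i=\pi(E_i)\times E_i'$, where $\pi$ is the projection onto $\R^\ell$. For each $x$, the slices $(E_i)_x$ cover $A_x$, each has diameter $\ll\delta_i$, and $(E_i)_x$ is nonempty only when $x\in\pi(E_i)$. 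Therefore
\[
\mathcal H^{s'}_{c\delta}(A_x)\ll\sum_i\delta_i^{s'}\mathbbm 1_{\pi(E_i)}(x).
\]
The right-hand side is a Borel function, so integrating it and using $\mathcal L_\ell(\pi(E_i))=\delta_i^\ell$ gives
\[
\int^*\mathcal H^{s'}_{c\delta}(A_x)\dif x\ll\sum_i\delta_i^{\ell+s'}\ll \mathcal H^{\ell+s'}(A)+\epsilon.
\]
Finally let $\epsilon\to0$ and then $\delta\to0$. Since $\mathcal H^{s'}_{c\delta}(A_x)$ increases to $\mathcal H^{s'}(A_x)$, the monotone convergence theorem for upper integrals (valid for increasing sequences of arbitrary nonnegative functions) passes the limit inside, which yields the inequality.

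The main obstacle is the measure-theoretic bookkeeping rather than any geometry: we must handle a possibly non-measurable slice function and justify the $\delta\to0$ limit. Working throughout with upper integrals and Borel majorants resolves this, and the Borel hypothesis on $A$ is then used only to make the statement natural, as in \cite{BerV} and Falconer's Theorem 7.7.
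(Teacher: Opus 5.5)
Your proof is correct. The reduction to showing $\mathcal H^{\ell+s'}(A)>0$ for $s'<s$, the cube-cover bound $\mathcal H^{s'}_{c\delta}(A_x)\ll\sum_i\delta_i^{s'}\mathbbm 1_{\pi(E_i)}(x)$, and the limit $\delta\to0$ via monotone convergence for upper integrals are all sound, and the argument does not even need $A$ to be Borel.

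The paper gives no proof of its own and simply cites Lemma 4 of Beresnevich--Velani. That lemma rests on exactly the Marstrand--Eggleston slicing inequality $\int^*\mathcal H^{s'}(A_x)\dif\mathcal H^\ell(x)\ll\mathcal H^{\ell+s'}(A)$ that you prove, so your route is the standard argument behind the citation.
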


Recall Lemma~\ref{lem2.1}: there are infinitely many regular scales $M$
in the sense that
\begin{equation*}
	\psi(M)\ge M^{-(\tau_\psi+\varepsilon)}\ \
{\text{and}}\ \ \psi(t)\le \psi(M)\left(\frac{t}{M}\right)^{-(\tau_\psi-\varepsilon)}, \text{ for all } t\ge M;
\end{equation*}
\begin{align}\label{eq:2-regular}
\psi(kq)\le \psi(M)\left(\frac{kq}{M}\right)^{-(\tau_\psi-\varepsilon)}\le 2^{\tau_\psi-\varepsilon}\psi(M)k^{-(\tau_\psi-\varepsilon)}, \ \ \text{ for all } \frac M2\le q\le M, \ k\ge 2;
\end{align}
\begin{equation*}
\psi(t)\le t^{-(\tau_\psi-\varepsilon)}\text{ for all }t\ge M.
\end{equation*}
As before, all the integers $M$, $M_j$ below are chosen from the regular scales.

\subsection{Fiber resonant points and exact cubes}

For any $A\in\Mat_{m,n}(\T)$, write
\[A=(U,\mathbf z)\qquad\text{with $U\in\Mat_{m,n-1}(\T)$ and $\mathbf z\in\T^m$}.\]
For an integer vector $\ba\in\Z^n$, we view it as a column vector and write
\[\mathbf a=(\mathbf u,q)\qquad\text{with $\mathbf u\in\Z^{n-1}$ and $q\in\Z.$}\]
For convenience, we call $\mathbf a\in\mathbb Z^n$ a \emph{denominator vector} and refer to its norm $|\mathbf a|$ as its \emph{height}.
 With these conventions,
\[
A\mathbf a=A(\bu, q)=U\mathbf u+q\mathbf z.
\]
For fixed $U$ and $q\ne0$, the equation
$U\mathbf u+q\mathbf z-\mathbf p-\boldsymbol\theta=0$
gives \emph{fiber resonant point}
\begin{equation}\label{eq:2-res-def}
R_{\mathbf u,\mathbf p,q}(U)=\frac{\mathbf p+\boldsymbol\theta-U\mathbf u}{q}\pmod{\Z^m}.
\end{equation}

We fix some notations which are similar to $n=1$ case.
\begin{itemize}\item For a regular scale $M\ge M_\ve$, define
\begin{equation*}
\mathcal Q_M:=\left\{(\bu,q)\in\Z^n: 0<|\mathbf u|\le M/4,\ M/2\le q\le M \text{ and }q\text{ is prime}\right\}.
\end{equation*}
For each {\em denominator vector} $\ba=(\bu,q)\in\mathcal Q_M$, we have $|\mathbf a|=q$ and there is no nontrivial common divisor of $\bu$ and $q$, since $q$ is prime and $|\bu|<q$. It is direct to see that
\begin{equation*}
\#\mathcal Q_M \asymp \frac{M^n}{\log M}.
\end{equation*}

\item For any $0<\eta<1$, as before let
\begin{equation*}
\ell_M(\eta)=(1-\eta)\frac{\psi(M)}M.
\end{equation*}

\item For $\mathbf a=(\mathbf u,q)\in\mathcal Q_M$ and $\mathbf p\in\Z^m$, define the \emph{fiber exact cube}
\begin{equation*}
A_\eta(\mathbf u,\mathbf p,q;U)=R_{\mathbf u,\mathbf p,q}(U)+\left[\eta\frac{\psi(q)}q,\,\eta\frac{\psi(q)}q+\ell_M(\eta)\right]^m.
\end{equation*}

Then, for every $\mathbf z\in A_\eta(\mathbf u,\mathbf p,q;U)$,
\begin{equation*}
	\eta\psi(q)\le|U\mathbf u+q\mathbf z-\mathbf p-\boldsymbol\theta|\le \eta\psi(q)+(1-\eta)\frac{\psi(M)}M<\psi(q).
\end{equation*}
\item
For $U\in\Mat_{m,n-1}(\T)$, define
\[
\mathscr R_M(U) :=\left\{R_{\mathbf u,\mathbf p,q}(U): (\mathbf u,q)\in\mathcal Q_M,\ \mathbf p\bmod q\right\},
\]
a collection of fiber resonant points and let
\[
\mathscr C_M^\eta(U):=\left\{A_\eta(\mathbf u,\mathbf p,q;U): (\mathbf u,q)\in\mathcal Q_M,\ \mathbf p\bmod q\right\}
\]
be the corresponding collection of fiber exact cubes.

\item For $\mathbf a,\mathbf b\in\Z^n\setminus\{\mathbf0\}$, define
$$\mathcal E_\eta(\mathbf a)=\left\{A\in\Mat_{m,n}(\T): A\mathbf a-\boldsymbol\theta\in\left[\eta\psi(q),\,\eta\psi(q)+q\ell_M(\eta)\right]^m \pmod{\Z^m}\right\},$$
$$
\mathcal D_\eta(\mathbf b)=\left\{A\in\Mat_{m,n}(\T):\|A\mathbf b-\boldsymbol\theta\|<\eta\psi(|\mathbf b|)\right\}.
$$
\end{itemize}

\subsection{Fiber dangerous cubes}

For any $\mathbf b=(\mathbf v,s)\in \Z^{n-1}\times\Z\setminus\{\mathbf0\}$, $\mathbf r\in\Z^m$ and $U\in\Mat_{m,n-1}(\T)$, define
\begin{equation}\label{eq:2-dan-def}
D_\eta(\mathbf b,\mathbf r;U)=B\left(\frac{\mathbf r+\boldsymbol\theta-U\mathbf v}{s},\eta\frac{\psi(|\bb|)}{|s|}\right).
\end{equation}
For obvious reasons, we call $D_\eta(\mathbf b,\mathbf r;U)$ a \emph{fiber dangerous cube}.

These are not all the dangerous parts that we should avoid, since we need also to avoid the hyperplane generated by $\mathbf b=(\mathbf v,s)$ with $s=0$. In this case, the part that should be avoided is
\begin{equation*}
\|U\mathbf v-\boldsymbol\theta\|<\eta\psi(|\mathbf v|).
\end{equation*}
The next lemma says that for almost all $U$, this can be avoided trivially.
\begin{lemma}\label{l:2-horizontal} For $\mathcal L_{m(n-1)}$-almost all $U\in \Mat_{m,n-1}(\T)$, only for finitely many $\mathbf v\in\Z^{n-1}$
\begin{equation*}
\|U\mathbf v-\boldsymbol\theta\|<\psi(|\mathbf v|).
\end{equation*}
\end{lemma}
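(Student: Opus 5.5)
This is a Borel--Cantelli argument for the system of $m$ linear forms in $n-1$ variables given by $U$. I will show that the expected number of solutions is finite and conclude by the convergence half of Borel--Cantelli. For each nonzero $\mathbf v\in\Z^{n-1}$ set
\[
E_{\mathbf v}:=\Big\{U\in\Mat_{m,n-1}(\T):\ \|U\mathbf v-\boldsymbol\theta\|<\psi(|\mathbf v|)\Big\}.
\]
The vector $\mathbf v=\mathbf 0$ contributes at most one term, and it is excluded by the requirement $(\mathbf v,s)\ne\mathbf 0$ with $s=0$. It therefore suffices to prove $\sum_{\mathbf v\ne\mathbf 0}\mathcal L_{m(n-1)}(E_{\mathbf v})<\infty$.

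\textbf{Step 1: the measure of each $E_{\mathbf v}$.} Fix $\mathbf v\ne\mathbf 0$ and choose an index $k$ with $v_k\ne0$. The map $U\mapsto U\mathbf v\pmod{\Z^m}$ acts row by row. For row $i$ it is $(u_{i1},\dots,u_{i,n-1})\mapsto\sum_j u_{ij}v_j$. Fix all the other entries. Then $u_{ik}\mapsto v_ku_{ik}+\text{const}$ is a $|v_k|$-to-one covering map $\T\to\T$, and it preserves Haar measure. By Fubini, $U\mathbf v$ is therefore uniformly distributed on $\T^m$, so
\[
\mathcal L_{m(n-1)}(E_{\mathbf v})=\mathcal L_m\big(B(\boldsymbol\theta,\psi(|\mathbf v|))\big)\ll\psi(|\mathbf v|)^m.
\]
This bound is uniform in $\boldsymbol\theta$. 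For $\psi\ge1/2$ it is trivially at most $1$; only large $|\mathbf v|$ matter, and there $\psi$ is small.

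\textbf{Step 2: summation.} Group the vectors by their height $t=|\mathbf v|$. The number of $\mathbf v\in\Z^{n-1}$ with $|\mathbf v|=t$ is $\ll t^{n-2}$. Hence
\[
\sum_{\mathbf v\ne\mathbf0}\mathcal L_{m(n-1)}(E_{\mathbf v})\ll\sum_{t\ge1}t^{n-2}\psi(t)^m.
\]
Since $t^{n-2}\le t^{n-1}$, this sum is dominated by $\sum_t t^{n-1}\psi(t)^m$, which converges by \eqref{eq:2-sum-ve}. One can also check the bound directly: $\psi(t)\le t^{-(\tau_\psi-\varepsilon)}$ with $m(\tau_\psi-\varepsilon)>n>n-1$.

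\textbf{Step 3: conclusion.} By Borel--Cantelli, $\mathcal L_{m(n-1)}$-almost every $U$ lies in only finitely many $E_{\mathbf v}$, which is exactly the statement of the lemma. No step here is a real obstacle. The only point needing care is Step 1: the uniformity of $U\mathbf v$ must be justified when $\mathbf v$ is not primitive. The covering-map argument above handles this, because every integer multiple map on $\T$ preserves Haar measure.
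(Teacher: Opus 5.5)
Your proof is correct and is the paper's argument: both use the convergence half of Borel--Cantelli, with $\mathcal L_{m(n-1)}(E_{\mathbf v})\ll\psi(|\mathbf v|)^m$ and at most $\ll t^{n-2}$ vectors of height $t$, so everything reduces to the convergence of $\sum_t t^{n-2}\psi(t)^m$, which follows from \eqref{eq:2-sum-ve}. Your covering-map and Fubini justification that $U\mathbf v$ is uniformly distributed on $\T^m$ (also for non-primitive $\mathbf v$) makes explicit a step the paper states without comment.
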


\begin{proof} Consider the limsup set $$
\{U\in \Mat_{m,n-1}(\T): \|U\mathbf v-\boldsymbol\theta\|<\psi(|\mathbf v|), \ {\text{i.m.}}\ \ \mathbf v\in\Z^{n-1}\}.$$
Then, \begin{align*}
&\sum_{\mathbf v\in\Z^{n-1}}\mathcal{L}_{m(n-1)}\left(\Big\{U\in\Mat_{m,n-1}(\T): \|U\mathbf v-\boldsymbol\theta\|<\psi(|\bv|)\Big\}\right)\\
\ll&\ \sum_{\mathbf v\in\Z^{n-1}}\psi(|\mathbf v|)^{m}\ll \sum_{h=1}^{\infty} h^{n-2}\psi(h)^m<\infty
\end{align*} by (\ref{eq:2-sum-ve}). Thus, the convergence part of Borel--Cantelli lemma concludes the desired result.
\end{proof}

In the following, we focus only on the full measure fibers $U$ obtained from Lemma \ref{l:2-horizontal}, so all the dangerous part which should be avoided are the cubes in (\ref{eq:2-dan-def}).

\subsection{Good fiber cubes}

 For any set $J\subset \T^m$ and a real number $T>1$, define
\begin{equation*}
\mathfrak B_T^\eta(J;U)=\sum_{\substack{\mathbf b=(\mathbf v,s)\in\Z^n\setminus\{\mathbf0\}\\ s\ne0,\ |\mathbf b|>T}}\sum_{\mathbf r\bmod s}\mathcal L_m\left(J\cap D_\eta(\mathbf b,\mathbf r;U)\right).
\end{equation*}

\begin{definition}[Good fiber cube]\label{d:2-good} Let $T>1$ and $0<\eta,\delta<1$ and $U\in\Mat_{m,n-1}(\T)$. A fiber cube $J\subset\T^m$ is called $(T,\eta,\delta)$-\emph{good with respect to $U$} if
\begin{equation*}
\mathfrak B_T^\eta(J;U)\le\delta\,\mathcal L_m(J).
\end{equation*}
\end{definition}

The existence of good fiber cubes follows from the same idea as $n=1$ case.
\begin{lemma}\label{l:2-initial} Let $U\in\Mat_{m,n-1}(\T)$ and let $J\subset\T^m$ be a fiber cube. Then, for every $\eta\in(0,1)$ and every $\delta>0$, there exists $T_0=T_0(J,\delta)>1,$ independent of $U$, such that $J$ is $(T,\eta,\delta)$-good with respect to $U$ for every $T\ge T_0$.
\end{lemma}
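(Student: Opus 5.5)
Following the $n=1$ case, I will drop the intersection with $J$ and bound $\mathfrak B_T^\eta(J;U)$ by the total measure of all fiber dangerous cubes of height $>T$, uniformly in $U$:
\[
\mathfrak B_T^\eta(J;U)\le \sum_{\substack{\mathbf b=(\mathbf v,s)\\ s\ne0,\ |\mathbf b|>T}}\ \sum_{\mathbf r\bmod s}\mathcal L_m\bigl(D_\eta(\mathbf b,\mathbf r;U)\bigr).
\]
Fix $\mathbf b=(\mathbf v,s)$ with $s\ne0$. Each $D_\eta(\mathbf b,\mathbf r;U)$ is a sup-norm ball of radius $\eta\psi(|\mathbf b|)/|s|$, so its measure on $\T^m$ is at most $\min\{1,(2\eta\psi(|\mathbf b|)/|s|)^m\}\le 2^m\psi(|\mathbf b|)^m/|s|^m$. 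There are $|s|^m$ residue classes $\mathbf r\bmod s$, so the inner sum is at most $2^m\psi(|\mathbf b|)^m$. The key point is that this bound does not depend on $U$: translating by $U\mathbf v/s$ does not change measure.

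Summing over $\mathbf b$ by height $h=|\mathbf b|$, there are $\ll h^{n-1}$ vectors $\mathbf b\in\Z^n$ with $|\mathbf b|=h$. Hence
\[
\mathfrak B_T^\eta(J;U)\ll_{m,n}\sum_{h>T}h^{n-1}\psi(h)^m,
\]
which is the tail of the series in \eqref{eq:2-sum-ve}. That series converges because $\tau_\psi>n/m$, so the tail tends to $0$ as $T\to\infty$.

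I then choose $T_0$ so that this tail is at most $\delta\mathcal L_m(J)$. This $T_0$ depends only on $J$ (through $\mathcal L_m(J)$), on $\delta$, and on $m,n,\psi$, not on $U$ or $\eta$, since $\eta<1$ was absorbed into the constant. Monotonicity in $T$ gives the conclusion for every $T\ge T_0$.

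The only step needing care is the uniformity in $U$. It follows from translation invariance of Haar measure on $\T^m$ and from counting the residue classes exactly, so I do not expect a real obstacle.
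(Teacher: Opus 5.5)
Your proof is correct and is essentially the paper's argument. Both bound $\mathfrak B_T^\eta(J;U)$ by the total mass of all fiber dangerous cubes of height $>T$, using $|s|^m$ residue classes each of measure $\ll(\psi(|\mathbf b|)/|s|)^m$ and $\asymp h^{n-1}$ vectors of height $h$, and conclude, uniformly in $U$, from the convergence of $\sum_h h^{n-1}\psi(h)^m$. Your explicit constant $2^m$, which the paper's displayed inequality silently absorbs, just makes the same route a little more careful.
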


\begin{proof} Note that
$$\#\{\mathbf b=(\mathbf v,s)\in\Z^n\setminus\{\mathbf0\}: |\mathbf b|=h\}\asymp h^{n-1}, \
\ \mathcal{L}_m(D_\eta(\mathbf b,\mathbf r;U))\asymp \left(\frac{\psi(|\mathbf b|)}{s}\right)^m. $$
Then one has
\[
\mathfrak B_T^\eta(J;U)\le \sum_{\substack{\mathbf b=(\mathbf v,s)\in\Z^n\setminus\{\mathbf0\}\\ s\ne0,\ |\mathbf b|>T}}\sum_{\mathbf r\bmod s}
\mathcal L_m\left(D_\eta(\mathbf b,\mathbf r;U)\right)\le \sum_{h> T}h^{n-1}\psi(h)^m=o(1),
	\]
as $T\to \infty$, by the convergence of the last series, see (\ref{eq:2-sum-ve}).

The estimation is independent on $U$. So once $T$ is sufficiently large independent on $U$, one has
\[\mathfrak B_T^\eta(J;U)\le\delta\,\mathcal L_m(J).\qedhere\]
\end{proof}

\subsection{Auxiliary lemmas} The next several lemmas characterize the intersection properties of fiber exact cubes and fiber dangerous cubes
which are similar to $n=1$ case.

\begin{lemma}\label{l:2-resonant-1}
Suppose that
\[
\|R_{\bu,\bp,q}(U)-R_{\bv,\br,s}(U)\|<\delta.
\]
Then there exists $\bk\in\Z^m$ such that
\begin{equation}\label{eq:2-k-exists}
\left|\bk+(s-q)\boldsymbol\theta-U(s\bu-q\bv)\right|<q|s|\delta
\end{equation}
and
\begin{equation}\label{eq:2-k-mod}
\bk\equiv s\bp\pmod q,\ \ \ \ \bk\equiv-q\br\pmod{|s|},\ \ \ \ \bk\equiv s\bp-q\br\pmod{q|s|}.
\end{equation}
In particular, writing $d=(q,s)$, we have $\bk\in d\,\Z^m,$
and each $\bk$ determines exactly $d^m$ distinct pairs
$(\bp\bmod q,\br\bmod s)$.
\end{lemma}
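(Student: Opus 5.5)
The plan is to mimic the proof of Lemma~\ref{l:1-resonant-1}, keeping track of the extra term coming from $U$. By assumption there is $\boldsymbol\ell\in\Z^m$ with
\[
\left|\frac{\bp+\boldsymbol\theta-U\bu}{q}-\frac{\br+\boldsymbol\theta-U\bv}{s}-\boldsymbol\ell\right|<\delta .
\]
Multiplying through by $q|s|$ (equivalently by $qs$, then taking absolute values), the numerator becomes
\[
s\bp-q\br-qs\boldsymbol\ell+(s-q)\boldsymbol\theta-U(s\bu-q\bv).
\]
I would set $\bk:=s\bp-q\br-qs\boldsymbol\ell\in\Z^m$. This gives \eqref{eq:2-k-exists} immediately. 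The three congruences in \eqref{eq:2-k-mod} are then read off directly, since $qs\boldsymbol\ell$ vanishes modulo $q$, modulo $|s|$ and modulo $q|s|$.

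For the final assertion, write $d=(q,s)$. Since $d\mid q$ and $d\mid s$, we get $\bk=s\bp-q\br-qs\boldsymbol\ell\in d\Z^m$. It remains to count the pairs $(\bp\bmod q,\br\bmod s)$ that produce a given $\bk$ modulo $q|s|$; the natural statement is the number of solutions of $s\bp-q\br\equiv\bk\pmod{q|s|}$.

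I would work coordinatewise, so it suffices to treat $m=1$ and raise the count to the $m$-th power. Put $q=dq'$ and $s=ds'$ with $(q',s')=1$. The congruence $s p-q r\equiv k \pmod{q|s|}$ becomes
\[
s'p-q'r\equiv k/d \pmod{dq'|s'|}.
\]
Reducing modulo $q'$ gives $s'p\equiv k/d \pmod{q'}$, so $p$ is determined modulo $q'$, which leaves $d$ lifts modulo $q$. Similarly $r$ is determined modulo $|s'|$, which leaves $d$ lifts modulo $|s|$.

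The step needing care is checking that exactly $d$ of these $d^2$ lifted pairs satisfy the full congruence modulo $dq'|s'|$. Fix the residues $p_0\bmod q'$ and $r_0\bmod |s'|$ and write $p=p_0+q'i$, $r=r_0+|s'|j$ with $0\le i,j<d$. Substituting, the left side changes by $s'q'(i\mp j)$, up to the sign of $s$, modulo $dq'|s'|$. Hence the condition becomes a single linear congruence in $i\mp j$ modulo $d$. Because $s'p_0-q'r_0\equiv k/d$ modulo both $q'$ and $|s'|$, and hence modulo $q'|s'|$, this congruence is solvable. So exactly one residue of $i\mp j$ modulo $d$ works, which gives $d$ pairs $(i,j)$.

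Taking the $m$-th power over coordinates yields $d^m$ pairs. This is the main obstacle only in bookkeeping: signs when $s<0$, and whether the counting is meant with $\bk$ modulo $q|s|$. I would state it in that form, consistent with the case $d=1$ of Lemma~\ref{l:1-resonant-1}.
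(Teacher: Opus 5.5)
Your proof is correct and follows the paper's argument: you define the same $\bk=s\bp-q\br-qs\boldsymbol\ell$ and read off the congruences and $\bk\in d\Z^m$ directly. The paper cites the computation of Lemma~\ref{l:1-resonant-1} here, and your direct expansion handles the two different shifts $U\bu$ and $U\bv$ cleanly. For the count, the paper is a little shorter than your lifting argument: $k_i\equiv sp_i\pmod q$ has exactly $d$ solutions $p_i\bmod q$, and for each of them the third congruence forces $r_i\equiv (sp_i-k_i)/q\pmod{|s|}$. Both routes give exactly $d^m$ pairs.
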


\begin{proof}
Applying Lemma~\ref{l:1-resonant-1} with
$\bta=U(s\bu-q\bv)$, we obtain an integer vector $\bk$
satisfying \eqref{eq:2-k-exists} and \eqref{eq:2-k-mod}, given by
\[
\bk=s\bp-q\br+qs\boldsymbol\ell
\]
for some $\boldsymbol\ell\in\Z^m$.

Let $d=(q,s)$. Since $d$ divides both $q$ and $s$, we have $\bk\in d\Z^m.$
For each coordinate, the congruence
\[
k_i\equiv sp_i\pmod q
\]
has exactly $d$ solutions $p_i\bmod q$. Once $p_i$ is fixed, the last
congruence in \eqref{eq:2-k-mod} gives
\[
r_i\equiv \frac{sp_i-k}{q}\pmod{|s|},
\]
so $r_i\bmod s$ is uniquely determined. Therefore,
each $\bk$ determines exactly $d^m$ distinct pairs
$(\bp\bmod q,\br\bmod s)$.
\end{proof}

\begin{lemma}\label{l:2-resonant-2}
Fix $0<\eta_0<\eta<1$ and an integer $K\ge1$. Then, for all sufficiently large $M$, for every $\ba=(\bu,q)\in\mathcal Q_M$,
$$
\mathcal E_{\eta}(\ba)\cap\mathcal D_{\eta}(k\ba)=\emptyset
$$
for every integer $k$ with $1\le |k|\le K$. In particular, for any $1\le |k|\le K$ and any such $M$, we have
$$
A_{\eta}(\bu,\bp,q;U)\cap D_{\eta_0}(k\ba,\br;U)=\emptyset
$$
for every $U\in\Mat_{m,n-1}(\T)$, $\bp\bmod q$ and $\br\mod s$.
\end{lemma}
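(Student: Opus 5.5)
The plan mirrors Lemma~\ref{l:1-resonant-3}. Suppose $A\in\mathcal E_\eta(\ba)\cap\mathcal D_\eta(k\ba)$ for some $\ba=(\bu,q)\in\mathcal Q_M$ and $1\le|k|\le K$. By the definition of $\mathcal E_\eta(\ba)$ there is $\mathbf p\in\Z^m$ with
\[
\mathbf t:=A\ba-\mathbf p-\bta\in\left[\eta\psi(q),\,\eta\psi(q)+q\ell_M(\eta)\right]^m,
\]
so $\eta\psi(q)\le|\mathbf t|\le\psi(q)$. Since $A(k\ba)=kA\ba=k\mathbf t+k\mathbf p+k\bta$, the condition $A\in\mathcal D_\eta(k\ba)$ gives
\[
\|k\mathbf t+(k-1)\bta\|<\eta\psi(|k|q)\le\eta\psi(q),
\]
where we use $|k\ba|=|k|q$ and that $\psi$ is non-increasing. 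We derive a contradiction by splitting according to whether $(k-1)\bta\in\Z^m$. As in Lemma~\ref{l:1-resonant-3}, set $\epsilon_K=\min\{\|(k-1)\bta\|:1\le|k|\le K,\ (k-1)\bta\notin\Z^m\}$, which is positive because the set is finite. Since $\psi(q)\le\psi(M/2)\to0$, for $M$ large we have $K\psi(q)+\eta\psi(q)<\epsilon_K$.

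\textbf{Case $(k-1)\bta\notin\Z^m$.} The triangle inequality gives $\epsilon_K\le\|(k-1)\bta\|\le|k\mathbf t|+\eta\psi(q)<\epsilon_K$, which is impossible.

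\textbf{Case $(k-1)\bta\in\Z^m$.} The inequality reduces to $\|k\mathbf t\|<\eta\psi(|k|q)$. For $M$ large, $|k\mathbf t|\le K\psi(q)<1/2$, so $\|k\mathbf t\|=|k||\mathbf t|\ge|\mathbf t|$.
\begin{itemize}
\item If $|k|\ge2$, then $|\mathbf t|\ge\eta\psi(q)\ge\eta\psi(|k|q)$, which contradicts the strict inequality.
\item If $k=1$, the condition reads $|\mathbf t|<\eta\psi(q)$, which contradicts $\mathbf t\ge\eta\psi(q)$ coordinatewise.
\end{itemize}
The only delicate point is the negative values of $k$. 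For these $|k|\ge1$ and the same argument applies, since $\|k\mathbf t\|=\|\,|k|\mathbf t\|$. We also need the sign convention to be consistent with $|k\ba|=|k|q$ in $\psi$; the estimate used is $\psi(|k|q)\le\psi(q)$, which holds regardless of sign. The choice of $M$ depends only on $K$, $\eta$, $\bta$ and $\psi$, not on $U$.

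For the fiber statement, take $U$, $\bp$, $\br$ and $\mathbf z\in A_\eta(\bu,\bp,q;U)\cap D_{\eta_0}(k\ba,\br;U)$, and put $A=(U,\mathbf z)$. Membership in the fiber exact cube means $U\bu+q\mathbf z-\bp-\bta=A\ba-\bp-\bta$ lies in the prescribed box, so $A\in\mathcal E_\eta(\ba)$. Membership in $D_{\eta_0}$ with $\mathbf b=k\ba=(k\bu,kq)$ gives
\[
|A(k\ba)-\br-\bta|<\eta_0\psi(|k\ba|)<\eta\psi(|k\ba|),
\]
so $A\in\mathcal D_\eta(k\ba)$. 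This contradicts the first part. No step is a serious obstacle; the main thing is to keep the edge case $k=1$ and the signs straight.
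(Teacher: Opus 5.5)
Your proof is correct and follows the paper's argument: you reduce membership in $\mathcal E_\eta(\ba)\cap\mathcal D_\eta(k\ba)$ to $\|k\mathbf t+(k-1)\bta\|<\eta\psi(|k|q)$, rule this out by the two-case argument of Lemma~\ref{l:1-resonant-3}, and obtain the fiber statement from $\eta_0<\eta$. Your explicit handling of $k=1$, of negative $k$, and of the non-strict bound $|\mathbf t|\ge\eta\psi(q)$ spells out what the paper covers by saying ``the same argument shows.''
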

\begin{proof}
Assume by contradiction that
$$
A\in\mathcal E_{\eta}(\ba)\cap\mathcal D_{\eta}(k\ba)
$$
for some integer $k$ with $1\le |k|\le K$. Then there exists $\mathbf p\in\Z^m$ such that
$$
\mathbf t:=A\mathbf a-\mathbf p-\boldsymbol\theta
\in\left[\eta\psi(q),\,\eta\psi(q)+q\ell_M(\eta)\right]^m
\qaq
\|A(k\mathbf a)-\boldsymbol\theta\|<\eta\psi(|k|q).
$$

Using
$
A\mathbf a=\mathbf p+\boldsymbol\theta+\mathbf t
$
in the second inequality, we obtain
\begin{equation}\label{eq:2-mul-c}
|k\mathbf t+(k-1)\boldsymbol\theta|<\eta\psi(|k|q).
\end{equation}
This is exactly the same situation as in \eqref{eq:1-mul-c}, and hence the same argument shows that \eqref{eq:2-mul-c} cannot hold once $M$ is sufficiently large. Therefore, the intersection $
\mathcal E_{\eta}(\ba)\cap\mathcal D_{\eta}(k\ba)
$ is empty.

The ``In particular" part follows immediately by noting that
\[A_{\eta}(\bu,\bp,q;U)\cap D_{\eta_0}(k\ba,\br;U)
\subset
\mathcal E_{\eta}(\ba)\cap\mathcal D_{\eta}(k\ba)
=\emptyset,\]
since $\eta_0<\eta$.
\end{proof}

We cite a standard result about the invariance of Haar measure under continuous surjective homomorphisms.
\begin{lemma}\label{l:2-haar-push} Let $G_1$ and $G_2$ be compact abelian groups equipped with normalized Haar measures $\mu_1$ and $\mu_2$, respectively. Let
$\pi:G_1\to G_2$
be a continuous surjective group homomorphism. Then for every $\boldsymbol\xi\in G_2$
and every Borel set $B\subset G_2$,
\[
\mu_1\left(\left\{x\in G_1: \pi(x)-\boldsymbol\xi\in B\right\}\right)=\mu_2(B).
\]
\end{lemma}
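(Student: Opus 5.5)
The plan is to derive the statement from translation invariance of Haar measure together with the standard fact that $\pi_*\mu_1$ is itself a normalized Haar measure on $G_2$; the heart of the matter is Haar uniqueness.

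\emph{Step 1: the pushforward is Haar.} Set $\nu:=\pi_*\mu_1$, that is, $\nu(B)=\mu_1(\pi^{-1}(B))$. Since $\pi$ is continuous, $\nu$ is a Borel probability measure on $G_2$. It is regular, because $\mu_1$ is a regular measure on a compact space and preimages of compact sets under $\pi$ are compact.

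\emph{Step 2: $\nu$ is translation invariant.} Fix $y\in G_2$. Surjectivity of $\pi$ gives some $x_0\in G_1$ with $\pi(x_0)=y$. Because $\pi$ is a homomorphism,
\[
\pi^{-1}(B+y)=\pi^{-1}(B)+x_0 .
\]
Invariance of $\mu_1$ then yields $\nu(B+y)=\nu(B)$.

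\emph{Step 3: identify $\nu$.} By uniqueness of normalized Haar measure on the compact abelian group $G_2$, we conclude $\nu=\mu_2$.

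\emph{Step 4: the shifted statement.} For $\boldsymbol\xi\in G_2$ we have
\[
\{x:\pi(x)-\boldsymbol\xi\in B\}=\pi^{-1}(B+\boldsymbol\xi).
\]
Its $\mu_1$-measure is therefore $\nu(B+\boldsymbol\xi)=\mu_2(B+\boldsymbol\xi)=\mu_2(B)$, using invariance of $\mu_2$.

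The only step needing any care is Step 2, namely lifting a translation in $G_2$ to one in $G_1$. This is exactly where surjectivity enters, and it is where the argument would fail for non-surjective maps. In the applications the map is $\pi(U)=U\mathbf w \bmod \Z^m$ for a nonzero integer vector $\mathbf w$, from $\Mat_{m,n-1}(\T)$ to $\T^m$. Its surjectivity follows from the fact that $t\mapsto t w_i \bmod 1$ is onto $\T$ when $w_i\neq0$.
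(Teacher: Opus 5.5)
Your argument is correct and is essentially the paper's proof: the paper cites the fact $\pi_*\mu_1=\mu_2$ from the literature and then uses translation invariance of $\mu_2$, whereas you prove that pushforward identity yourself by lifting translations through surjectivity and appealing to uniqueness of Haar measure. One small point: regularity of $\pi_*\mu_1$ should be justified by the fact that continuous images of compact sets are compact, or simply by metrizability of the tori in the application (which makes every finite Borel measure regular), rather than by compactness of preimages.
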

\begin{proof}
By \cite[Lemma~2.7]{LeLe2025}, we have
$\pi_*\mu_1=\mu_2.$
The conclusion follows from the translation invariance of $\mu_2$.
\end{proof}

This leads to the following lemma, simple but vital for counting the integer vectors $\mathbf k$.
\begin{lemma}\label{l:2-haar-ave}
Let $d\in\Z\setminus\{0\}$. Let
$\mathbf w\in d\,\Z^{n-1}\setminus\{\mathbf 0\}$,
$\boldsymbol\gamma\in\R^m$ and $\ve>0$. Then,
\begin{equation}\label{eq:2-haar-ave}
\int_{\T^{m(n-1)}}
\#\left\{
\mathbf k\in d\,\Z^m:
|\mathbf k+\boldsymbol\gamma-U\mathbf w|<\ve
\right\}
\dif\mathcal L_{m(n-1)}(U)
=
\left(\frac{2\ve}{|d|}\right)^m.
\end{equation}
\end{lemma}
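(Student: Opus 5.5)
The plan is to swap the integral and the count, reducing the left side of \eqref{eq:2-haar-ave} to a single Haar-measure computation via Lemma~\ref{l:2-haar-push}. Write $\mathbf k=d\mathbf j$ with $\mathbf j\in\Z^m$. By Tonelli,
\[
\int_{\T^{m(n-1)}}\#\{\cdots\}\dif\mathcal L_{m(n-1)}(U)=\sum_{\mathbf j\in\Z^m}\mathcal L_{m(n-1)}\bigl(\{U: |d\mathbf j+\boldsymbol\gamma-U\mathbf w|<\ve\}\bigr).
\]
One point needs care first. As written, $U\mathbf w$ is only defined modulo $\Z^m$ when $U\in\Mat_{m,n-1}(\T)$. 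Since $\mathbf w\in d\Z^{n-1}$, we can write $\mathbf w=d\mathbf w'$ with $\mathbf w'\in\Z^{n-1}\setminus\{\mathbf 0\}$. We then interpret $U$ through its representative in $[0,1)^{m(n-1)}$, so that $U\mathbf w=d\,U\mathbf w'$ is a genuine real vector.

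The cleaner route is to fold the sum over $\mathbf j$ into a statement about a torus. Dividing by $|d|$, the condition becomes
\[
\Bigl|\mathbf j+\tfrac{\boldsymbol\gamma}{d}-U\mathbf w'\Bigr|<\tfrac{\ve}{|d|}.
\]
Summing over $\mathbf j\in\Z^m$ counts the lattice translates of the box $B(\mathbf 0,\ve/|d|)$ that contain $U\mathbf w'-\boldsymbol\gamma/d$. This count equals the periodized indicator
\[
F(\mathbf x)=\sum_{\mathbf j\in\Z^m}\mathbbm 1_{B(\mathbf 0,\ve/|d|)}(\mathbf x+\mathbf j),
\]
which is a function on $\T^m$, evaluated at $\mathbf x=U\mathbf w'-\boldsymbol\gamma/d$. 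The integrand therefore depends on $U$ only through $\pi(U):=U\mathbf w'\bmod\Z^m$, so the representative issue disappears. (If $\ve/|d|>1/2$ the translates overlap, but $F$ still counts them correctly, which is why we use $F$ rather than an indicator.)

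Next we apply Lemma~\ref{l:2-haar-push} to $\pi:\T^{m(n-1)}\to\T^m$, $U\mapsto U\mathbf w'$. This map is a continuous homomorphism, and it is surjective because $\mathbf w'\neq\mathbf 0$: if $w'_i\ne0$, then the $i$-th column of $U$ alone already covers $\T^m$ via multiplication by $w'_i$. The lemma gives $\pi_*\mathcal L_{m(n-1)}=\mathcal L_m$, together with translation invariance by $\boldsymbol\xi=\boldsymbol\gamma/d$. Hence
\[
\int F\bigl(\pi(U)-\boldsymbol\xi\bigr)\dif U=\int_{\T^m}F\dif\mathcal L_m=\mathcal L_m\bigl(B(\mathbf 0,\ve/|d|)\bigr)=\Bigl(\frac{2\ve}{|d|}\Bigr)^m.
\]
The middle equality is unfolding: the integral of a periodization over $\T^m$ equals the Lebesgue integral over $\R^m$. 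Lemma~\ref{l:2-haar-push} is stated for sets, and we extend it to the nonnegative function $F$ by the standard simple-function (monotone convergence) argument.

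The main obstacle is not analytic. It is making the statement well defined: the count depends on the lift of $U$ only through $d\,U\mathbf w'$, and rewriting via $\mathbf w=d\mathbf w'$ removes exactly that ambiguity. This is the reason the hypothesis $\mathbf w\in d\Z^{n-1}$ is needed, and the periodization step is what makes the argument work.
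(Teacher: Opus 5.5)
Your proof is correct and follows essentially the same route as the paper: substitute $\mathbf k=d\mathbf j$, push Haar measure forward along the surjective homomorphism $U\mapsto U\mathbf w/d \bmod \Z^m$, and unfold the periodized indicator of the cube $(-\ve/|d|,\ve/|d|)^m$ into a Lebesgue integral over $\R^m$. Your remark that the count is well defined on the torus only because $\mathbf w\in d\,\Z^{n-1}$ (so changing the lift of $U$ shifts $U\mathbf w$ by an element of $d\,\Z^m$) is a useful clarification that the paper leaves implicit.
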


\begin{proof}
Since $\mathbf w\in d\,\Z^{n-1}\setminus\{\mathbf 0\}$, we have
\[
\frac{\mathbf w}{d}\in\Z^{n-1}\setminus\{\mathbf 0\}.
\]
Hence, the homomorphism
\[
U\longmapsto U\frac{\mathbf w}{d}\pmod{\Z^m}
\]
is onto $\T^m$. Writing $\mathbf k=d\mathbf j$, the left-hand side of
\eqref{eq:2-haar-ave} is equal to
\begin{align*}
&\int_{\T^{m(n-1)}}
\#\left\{
\mathbf j\in\Z^m:
\left|
\mathbf j+\frac{\boldsymbol\gamma}{d}
-U\frac{\mathbf w}{d}
\right|
<
\frac{\varepsilon}{|d|}
\right\}
\dif\mathcal L_{m(n-1)}(U)\\
=&\int_{\T^m}
\#\left\{
\mathbf j\in\Z^m:
\left|
\mathbf j+\frac{\boldsymbol\gamma}{d}-\mathbf x
\right|
<
\frac{\varepsilon}{|d|}
\right\}
\,\dif\mathcal L_m(\mathbf x)\\
=&\int_{[0,1)^m}
\sum_{\mathbf j\in\Z^m}
\mathbf 1_{\left(-\frac{\varepsilon}{|d|},
\frac{\varepsilon}{|d|}\right)^m}
\left(
\mathbf x-\mathbf j-\frac{\boldsymbol\gamma}{d}
\right)
\dif\mathbf x\\
=&\int_{\R^m}
\mathbf 1_{\left(-\frac{\varepsilon}{|d|},
\frac{\varepsilon}{|d|}\right)^m}(\mathbf y)
\dif\mathbf y
=
\left(\frac{2\varepsilon}{|d|}\right)^m.
\end{align*}
This completes the proof of \eqref{eq:2-haar-ave}.
\end{proof}

\begin{lemma}\label{l:2-ave-count}
Let $\ba=(\bu,q),\bb=(\bv,s)\in\Z^n$
with $q,s\ne0$, and put
\[
d=(q,s),\qquad \bw=s\bu-q\bv.
\]
Assume that $\bw\ne0$. For  $\delta>0$, let
\[
N_{\ba,\bb}(U;\delta)
:=
\#\left\{
\bp\bmod q:
\|R_{\bu,\bp,q}(U)-R_{\bv,\br,s}(U)\|<\delta\text{ for some }\mathbf r \bmod s
\right\}.
\]
Then,
\[
\int N_{\ba,\bb}(U;\delta)
\dif\mathcal L_{m(n-1)}(U)
\le
(2q|s|\delta)^m.
\]
\end{lemma}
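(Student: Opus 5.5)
The plan is to bound $N_{\ba,\bb}(U;\delta)$ pointwise by a lattice-point count to which Lemma~\ref{l:2-haar-ave} applies, and then integrate over $U$.

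\emph{Reduction to a lattice count.} Fix $U$. Suppose $\bp\bmod q$ is counted in $N_{\ba,\bb}(U;\delta)$, so that $\|R_{\bu,\bp,q}(U)-R_{\bv,\br,s}(U)\|<\delta$ for some $\br\bmod s$. Lemma~\ref{l:2-resonant-1} then produces $\bk\in d\Z^m$ satisfying \eqref{eq:2-k-exists}, namely
\[
|\bk+(s-q)\bta-U\bw|<q|s|\delta,
\]
together with the congruence $\bk\equiv s\bp\pmod q$.

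\emph{Bounding preimages of each $\bk$.} Next I bound how many classes $\bp\bmod q$ can arise from a single $\bk$. Since $\bk\equiv s\bp\pmod q$, each coordinate congruence $k_i\equiv sp_i\pmod q$ has at most $d$ solutions $p_i\bmod q$. Hence each $\bk$ arises from at most $d^m$ classes $\bp$. (This matches the count in Lemma~\ref{l:2-resonant-1}, where each $\bk$ determines exactly $d^m$ pairs.) It follows that
\[
N_{\ba,\bb}(U;\delta)\le d^m\,\#\{\bk\in d\Z^m:\ |\bk+(s-q)\bta-U\bw|<q|s|\delta\}.
\]
There is one subtlety here: distinct $\bp$ may come from different $\br$. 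This causes no problem, because we only use the map $\bp\mapsto$ (some chosen $\bk$). The bound is an injectivity-up-to-$d^m$ statement about the fibres of this map, and that is exactly what the congruence gives.

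\emph{Averaging.} Now integrate over $U\in\T^{m(n-1)}$. We have $\bw=s\bu-q\bv\in d\Z^{n-1}\setminus\{\mathbf 0\}$, since $d$ divides both $q$ and $s$ and $\bw\neq\mathbf 0$ by hypothesis. So Lemma~\ref{l:2-haar-ave} applies with $\boldsymbol\gamma=(s-q)\bta$ and $\ve=q|s|\delta$, and gives
\[
\int\#\{\cdots\}\,\dif\mathcal L_{m(n-1)}(U)=\left(\frac{2q|s|\delta}{|d|}\right)^m.
\]
Multiplying by $d^m$ yields $(2q|s|\delta)^m$, which is the claimed bound.

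\emph{Main obstacle.} The only delicate point is measurability of $U\mapsto N_{\ba,\bb}(U;\delta)$. This is routine: $N_{\ba,\bb}(U;\delta)$ is a finite sum of indicators of open conditions in $U$, so it is lower semicontinuous and hence Borel. One could also simply work with the upper integral, since only an upper bound is needed.
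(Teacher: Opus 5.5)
Your proposal is correct and follows the paper's proof essentially verbatim. You reduce via Lemma~\ref{l:2-resonant-1} to counting $\bk\in d\Z^m$ with $|\bk+(s-q)\bta-U\bw|<q|s|\delta$, lose a factor of at most $d^m$ from the congruence $\bk\equiv s\bp\pmod q$, and integrate using Lemma~\ref{l:2-haar-ave}; your remarks on the well-definedness of $\bp\mapsto\bk$ and on the measurability of $U\mapsto N_{\ba,\bb}(U;\delta)$ are small additions the paper leaves implicit.
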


\begin{proof}
By Lemma~\ref{l:2-resonant-1}, every pair counted by
$N_{\ba,\bb}(U;\delta)$ gives rise to some $\bk\in d\Z^m$ satisfying
\[
|\bk+(s-q)\boldsymbol\theta-U\bw|
<q|s|\delta.
\]
Moreover, each such $\bk$ determines exactly $d^m$ pairs
$(\bp\bmod q,\br\bmod s)$. Hence,
\[
N_{\ba,\bb}(U;\delta)
\le
d^m
\#\left\{
\bk\in d\Z^m:
|\bk+(s-q)\boldsymbol\theta-U\bw|
<q|s|\delta
\right\}.
\]
Since $d\mid\bw$ and $\bw\ne0$, Lemma~\ref{l:2-haar-ave} gives
\[
\begin{aligned}
\int N_{\ba,\bb}(U;\delta)
\dif\mathcal L_{m(n-1)}(U)
\le
d^m
\left(\frac{2q|s|\delta}{d}\right)^m=
(2q|s|\delta)^m.\qedhere
\end{aligned}
\]
\end{proof}

\subsection{Main proposition}

Let $\cu$ be the full-measure set given by Lemma~\ref{l:2-horizontal}. By Egorov's theorem, by discarding a set of arbitrarily small measure, we can find a set
$\Omega \subset \Mat_{m,n-1}(\T)\cap \cu$ such that for every $U\in\Omega$, the inequality
\begin{equation}\label{eq:2-no-solu}
|U\mathbf v-\mathbf r-\boldsymbol\theta|<\psi(|\mathbf v|)
\end{equation}
has no solution $(\mathbf v,\mathbf r)\in \Z^{n-1}\times\Z^m$ with $|\mathbf v|>T$.

 Fix parameters $T>1$, $0<\eta_0<1$, $L>0$ and $0<\delta<1$. Fix $U\in\Mat_{m,n-1}(\T)$ and let $J$ be a $(T,\eta_0,\delta)$-good cube with respect to $U$ whose existence is given by Lemma \ref{l:2-initial}. It is difficult to determine directly how many exact cubes inside $J$ have the desired properties required.
 The difficulty comes from the fact that we have no information on $U$.

So our approach is to estimate the number of exact cubes which fail the desired conditions on average over $U$. Once this averaged number of not good cubes is small, we can conclude for a large family of fibers $U$, most of the cubes satisfies the desired conditions. The main reason that this can be achieved is because of the convergence of the series $$
\sum_{q\ge 1}q^{n-1}\psi(q)^m.
$$

Let $\rho$ be a positive number very close to $1+n/m$ so it can satisfy
\begin{equation}\label{eq:2-rho}
1+\frac nm<\rho<\min\left\{1+\tau_\psi-\varepsilon,\, \frac nm+\frac{m(1+\tau_\psi+2\varepsilon)}{m+n}\right\}.
\end{equation}

The estimates developed below show that this strategy is effective: the set of discarded parameters has small measure.

\begin{proposition}\label{p:2-tran} Let $T,\eta_0,L$ and $\delta$ be chosen as above. Let $\eta_0<\eta<1$ and $\xi>0$. Choose $a>0$ so that $a(m+n)<1$.
Then, for every sufficiently large regular scale $M$ with $M^a>T$, there exists a measurable exceptional set $\mathcal X\subset\Omega,$ depending on $L$, such that
\[
\mathcal L_{m(n-1)}(\mathcal X)\le 3\xi.
\]
Moreover, there is a constant $c_2=c_2(m,n,\delta)>0$ such that for every $U\in\Omega\setminus\mathcal X$ and every cube $J\subset\T^m$ of side length $L$ which is $(T,\eta_0,\delta)$-good with respect to $U$, one can retain at least
\begin{equation*}
\frac{c_2L^mM^{m+n}}{2\log M}
\end{equation*}
fiber exact cubes in $\mathscr C_M^\eta(U)$ with the following properties:
\begin{enumerate}
\item[(a)] they are disjoint from every dangerous cube $D_{\eta_0}(\mathbf b,\mathbf r;U),\bb=(\bv,s)$ with $s\neq 0, T<|\mathbf b|\le M, \mathbf r\in\Z^m;$
\item[(b)] they are pairwise $\gg M^{-\rho}$-separated;
\item[(c)] all of them are $(M,\eta,\delta)$-good with respect to $U$.
\end{enumerate}
\end{proposition}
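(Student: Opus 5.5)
The plan is to mirror the $n=1$ proof of Proposition~\ref{p:1-tran}, but to replace each deterministic count of ``bad'' fiber exact cubes by an average over $U\in\Mat_{m,n-1}(\T)$, and then to apply Markov's inequality. For each of the three failure modes we will define a nonnegative measurable function $N_i(U)$, $i=1,2,3$, counting the cubes in $\mathscr C_M^\eta(U)$ that fail (intermediate-denominator avoidance, separation, goodness). We will show $\int N_i\,\dif\mathcal L_{m(n-1)}=o(M^{m+n}/\log M)$. Given $L$, we then put
\[
\mathcal X_i=\Big\{U\in\Omega:\ N_i(U)>\xi^{-1}\int N_i\Big\},\qquad \mathcal X=\mathcal X_1\cup\mathcal X_2\cup\mathcal X_3 .
\]
Markov's inequality gives $\mathcal L_{m(n-1)}(\mathcal X)\le 3\xi$. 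For $U\notin\mathcal X$ each $N_i(U)\le \xi^{-1}o(M^{m+n}/\log M)=o(L^mM^{m+n}/\log M)$ once $M$ is large in terms of $L,\xi$. This explains why $\mathcal X$ depends on $L$ but not on $J$.

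\textbf{Small denominators.} For $T<|\mathbf b|<M^a$ the argument is deterministic and is the same as Proposition~\ref{p:1-low}. For fixed $(\mathbf u,q)$ the points $R_{\mathbf u,\mathbf p,q}(U)$, $\mathbf p\bmod q$, form a translated grid of spacing $1/q$. The volume count therefore gives $q^m\mathcal L_m(J)+O(M^{m-1})$ points in $J_{-1/M}$ and at most $q^m\mathcal L_m(J\cap B_i)+O(M^{m-1})$ points in each enlarged dangerous cube. There are $O(M^{a(m+n)})$ dangerous cubes. Summing over the $\asymp M^n/\log M$ vectors in $\mathcal Q_M$ and using goodness of $J$, we get at least $c_2L^mM^{m+n}/\log M$ survivors, since $a(m+n)<1$. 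Dangerous cubes with $s=0$ never occur, because $U\in\Omega$ via \eqref{eq:2-no-solu}.

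\textbf{Intermediate denominators and separation.} First take $M^a\le|\mathbf b|\le M$ with $\mathbf b$ not a multiple of $\mathbf a$, so that $\mathbf w=s\mathbf u-q\mathbf v\neq0$. Lemma~\ref{l:2-ave-count} with $\delta\asymp\psi(|\mathbf b|)/|s|+\psi(q)/q$ bounds the average number of bad $\mathbf p$ by $\ll q^m\psi(|\mathbf b|)^m+|s|^m\psi(q)^m$. This bound has no ``$+1$'' term, because the averaging removes the lattice-point error. That is exactly what makes the $m=1$ and irrational/rational-$\bta$ case distinctions of the $n=1$ proof unnecessary here.

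Summing over $\mathbf a\in\mathcal Q_M$ and $\mathbf b$ gives
\[
\ll \frac{M^{m+n}}{\log M}\sum_{h\ge M^a}h^{n-1}\psi(h)^m+\frac{M^n}{\log M}\,M^{m+n}\psi(M/2)^m=o\Big(\frac{M^{m+n}}{\log M}\Big),
\]
using \eqref{eq:2-sum-ve} and $m(\tau_\psi-\varepsilon)>n$. When $\mathbf b=k\mathbf a$ with $\mathbf b$ of height at most $M$, we have $|k|=1$. Here $k=1$ is handled as in the $n=1$ case (same grid), and $k=-1$ is covered by Lemma~\ref{l:2-resonant-2}.

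For separation, take $\delta=M^{-\rho}$ and $\mathbf a\ne\mathbf a'$ in $\mathcal Q_M$. If $\mathbf w\neq0$, the average count is $\ll M^{m(2-\rho)}$. Summing over $M^{2n}/(\log M)^2$ pairs gives $o(M^{m+n}/\log M)$, since $\rho>1+n/m$. If $\mathbf w=0$, then $\mathbf a'$ is parallel to $\mathbf a$, which forces $q=q'$ and $\mathbf a=\mathbf a'$ because $q$ is prime with $|\mathbf u|<q$. So this case does not occur.

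\textbf{Goodness (c), the main obstacle.} We bound $\int\mathcal I_M(\eta;U)\,\dif U$, the averaged total overlap of the exact cubes with dangerous cubes of height $>M$, by $o(S_M)$ with $S_M\asymp M^n\psi(M)^m/\log M$. For $\mathbf b=k\mathbf a$ we use Lemma~\ref{l:2-resonant-2} for $|k|\le K$, and for $|k|>K$ the grid count of the $n=1$ proof together with \eqref{eq:2-regular}. For $\mathbf w\neq0$, Lemma~\ref{l:2-ave-count} bounds the averaged number of intersecting pairs by $\ll(|s|\psi(q))^m$, each contributing area $\le(\psi(|\mathbf b|)/|s|)^m$. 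This gives
\[
\sum_{\mathbf a}\ \sum_{|\mathbf b|>M}\psi(q)^m\psi(|\mathbf b|)^m\ll\frac{M^n}{\log M}\,\psi(M/2)^m\,M^n\psi(M)^m,
\]
using the regular-scale decay of $\sum_{h>M}h^{n-1}\psi(h)^m$. This is $o(S_M)$ because $M^n\psi(M/2)^m\to0$.

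The delicate points are uniformity and denominators with small $|s|$ relative to $|\mathbf b|$. There the dangerous cube is large and the pair count uses $q|s|\delta$ with $\delta\approx\psi(|\mathbf b|)/|s|+\psi(q)/q$. I expect the cross term $q^m\psi(|\mathbf b|)^m$ times the area to need a direct check, and this is where I would look hardest. Finally, Markov's inequality converts $\int\mathcal I_M=o(S_M)$ into at most $o(L^mM^{m+n}/\log M)$ non-good cubes off an exceptional set of measure $\le\xi$, and subtracting the three bad counts finishes (a)–(c).
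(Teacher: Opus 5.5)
Your plan matches the paper's for (a), (b), the parallel case $\bb=k\ba$ in (c), and the Markov step. Counting over the whole fiber, as you do, makes $\mathcal X$ independent of $J$. The one place you depart from the paper, the non-parallel part of (c), is also where your argument has a gap.

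\textbf{The gap in (c).} You assert that Lemma~\ref{l:2-ave-count} bounds the averaged number of intersecting pairs by $\ll(|s|\psi(q))^m$. With $\delta=\psi(q)/q+\eta\psi(|\bb|)/|s|$, the lemma only gives $\ll(q\psi(|\bb|))^m+(|s|\psi(q))^m$. For $n\ge2$ the first term can dominate, because $|\bb|>M$ does not force $|s|>M$ (take $|s|=1$, $|\bv|>M$).

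You flag this cross term but leave it unchecked, and the check is not automatic. Suppose you pair it with the area bound $(2\eta\psi(|\bb|)/|s|)^m$ that you use for the other term. Then the vectors with $|s|=1$ and $M<|\bv|\le 2M$ alone give an upper bound of order $S_M(\eta)\,M^{m+n-1}\psi(2M)^{2m}\psi(M)^{-m}$. For $\psi(h)=h^{-\tau}$ this is $S_M(\eta)M^{m+n-1-m\tau}$, which does not tend to $0$ when $m\ge2$ and $n/m<\tau<1+(n-1)/m$.

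\textbf{The repair.} Bound each intersection by $\min\{\ell_M(\eta)^m,(2\eta\psi(|\bb|)/|s|)^m\}$, and pair the cross term with $\ell_M(\eta)^m$, the volume of the exact cube. This gives
\[
\sum_{\ba\in\mathcal Q_M}q^m\ell_M(\eta)^m\sum_{\substack{|\bb|>M\\ s\ne0}}\psi(|\bb|)^m\ll S_M(\eta)\sum_{h>M}h^{n-1}\psi(h)^m=o(S_M(\eta)).
\]
The remaining term $(|s|\psi(q))^m(2\eta\psi(|\bb|)/|s|)^m$ is handled exactly as you did, via $M^n\psi(M/2)^m\to0$. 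With this change, (c) goes through.

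\textbf{Comparison with the paper.} The paper avoids the two-regime split. For linearly independent $\ba,\bb$ it uses Sprind\v{z}uk's lemma,
\[
\mathcal L_{mn}(\mathcal E_\eta(\ba)\cap\mathcal D_\eta(\bb))=\mathcal L_{mn}(\mathcal E_\eta(\ba))\,\mathcal L_{mn}(\mathcal D_\eta(\bb)),
\]
together with the Fubini identity \eqref{eq:24-overlap-fubini}. This yields $S_M(\eta)\sum_{h>M}h^{n-1}\psi(h)^m$ directly, uniformly in $|s|$ and with no extra term. Your repaired route stays entirely within Lemma~\ref{l:2-ave-count}, which you already use for (a) and (b), so it needs no outside lemma. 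The cost is pairing each regime with the correct area bound and carrying one additional, harmless, error term.
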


\begin{remark}
We emphasize that, for each $U\in\Omega\setminus\mathcal X$, the conclusion of Proposition~\ref{p:2-tran} holds for all $(T,\eta_0,\delta)$-good cubes of side length $L$. This uniformity is essential for the subsequent Cantor construction. Indeed, at each stage of the construction there are many parent cubes, and the same $U$ must allow the construction to be carried out simultaneously inside all of them.  The independence of the exceptional set from the parent cube is precisely what makes this recursive construction possible.
\end{remark}

We prove Proposition~\ref{p:2-tran} in the next several sections. The argument proceeds by successively removing those exact cubes which fail the conditions required in the Cantor construction, according to the height $|\bb|$ of the corresponding dangerous resonant points.

\section{Proof of Proposition \ref{p:2-tran}($a$)}

\subsection{Small heights}We first eliminate the fiber exact cubes that can intersect fiber dangerous cubes of relatively small heights.
For any $U\in\Mat_{m,n-1}(\T)$, let
\[
\mathcal V(U)=\bigcup_{\substack{T<|\mathbf b|<M^a\\ s\ne0}}\ \bigcup_{\mathbf r\in\Z^m}D_{\eta_0}(\mathbf b,\mathbf r;U),
\]
where $T$ and $a$ are given as in Proposition~\ref{p:2-tran}.

The role of the goodness assumption on $J$ ensures that the dangerous cubes occupy only a controlled proportion of the parent cube, leaving sufficiently much room for the exact cubes.

\begin{proposition}\label{p:2-low} Let $U\in\Mat_{m,n-1}(\T)$, let $J\subset\T^m$ be a cube of side length $L$ which is $(T,\eta_0,\delta)$-\emph{good with respect to $U$}, and let $a>0$ be as chosen above with $a(m+n)<1$. Then, there exists a constant $c_2=c_2(m,n,\delta)>0$ such that, for every sufficiently large $M$ satisfying $M^a>T$ and $M^{a(m+n)-1}=o(L^m)$, at least
\begin{equation}\label{eq:21-surv}
\frac{c_2L^mM^{m+n}}{\log M}	
\end{equation}
exact cubes $A_\eta(\mathbf u,\mathbf p,q;U)\in\mathscr C_M^\eta(U)$ satisfying
\[
A_{\eta}(\mathbf u,\mathbf p,q;U)\subset J\setminus\mathcal V(U).
\]
\end{proposition}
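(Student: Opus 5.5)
We follow the proof of Proposition~\ref{p:1-low} essentially verbatim, now counting fiber resonant points $R_{\bu,\bp,q}(U)$ with $(\bu,q)\in\mathcal Q_M$ and $\bp\bmod q$ for the fixed fiber $U$. Let $B_1,\dots,B_N$ be the fiber dangerous cubes $D_{\eta_0}(\bb,\br;U)$ with $\bb=(\bv,s)$, $s\ne0$, $T<|\bb|<M^a$, $\br\bmod s$. Since for each $\bb$ of height $h$ there are at most $|s|^m\le h^m$ residue classes and $\asymp h^{n-1}$ vectors $\bb$ of height $h$, we get
\[
N\ll\sum_{h<M^a}h^{n-1}h^m\ll M^{a(m+n)}.
\]
Because these cubes form only part of the sum defining $\mathfrak B_T^{\eta_0}(J;U)$, the $(T,\eta_0,\delta)$-goodness of $J$ with respect to $U$ gives $\sum_i\mathcal L_m(J\cap B_i)\le\delta L^m$.

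Next, set $J_{-1/M}$ to be $J$ with a boundary strip of width $M^{-1}$ removed, let $\widetilde B_i$ be the $M^{-1}$-neighbourhood of $B_i$, and put $F=J_{-1/M}\setminus\bigcup_i\widetilde B_i$. Every fiber exact cube has diameter $\ll\psi(q)/q=o(M^{-1})$. Hence $R_{\bu,\bp,q}(U)\in F$ guarantees $A_\eta(\bu,\bp,q;U)\subset J\setminus\mathcal V(U)$. It therefore suffices to count fiber resonant points in $F$.

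The key observation is that, for fixed $(\bu,q)\in\mathcal Q_M$, the points $\{R_{\bu,\bp,q}(U):\bp\bmod q\}$ form the lattice $q^{-1}\Z^m$ translated by $(\bta-U\bu)/q$. This is a $q^{-1}$-spaced grid, whatever $U$ is. The volume estimates used before therefore hold uniformly in $U$ and $\bu$:
\[
\#\{\bp:R\in J_{-1/M}\}=q^m\mathcal L_m(J)+O(M^{m-1}),\qquad
\#\{\bp:R\in J_{-1/M}\cap\widetilde B_i\}\le q^m\mathcal L_m(J\cap B_i)+O(M^{m-1}).
\]
The second bound uses that $\widetilde B_i$ has side $\ll M^{-1}$ plus the side of $B_i$, so the boundary error is $O((qM^{-1}+1)^{m-1}+\dots)=O(M^{m-1})$ once $M$ is large. (If the side of $B_i$ exceeds $M^{-1}$, the enlargement contributes $q^m\cdot O(M^{-1}\cdot|B_i|^{m-1})$, which is again absorbed into $O(M^{m-1})$ since $|B_i|\le1$.)

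Subtracting and summing over the $\asymp M^n/\log M$ vectors in $\mathcal Q_M$ gives at least
\[
(1-\delta)L^m\sum_{(\bu,q)\in\mathcal Q_M}q^m-O\!\left(\frac{M^n}{\log M}\,M^{a(m+n)+m-1}\right)
\gg_\delta \frac{L^mM^{m+n}}{\log M}.
\]
Here the error term is $o(L^mM^{m+n}/\log M)$ exactly by the hypothesis $M^{a(m+n)-1}=o(L^m)$.

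The only step requiring care is this uniformity of the lattice-point count: the boundary error must be independent of $U$ and $\bu$. That independence holds because translation by $(\bta-U\bu)/q$ does not change the spacing of the grid. No averaging over $U$ is needed at this stage; it only enters later, for intermediate heights.
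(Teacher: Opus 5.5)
Your proposal is correct and follows the paper's proof essentially verbatim. It uses the same count $O(M^{a(m+n)})$ of dangerous cubes, the goodness bound $\delta L^m$ on their total mass inside $J$, the $U$-independent translated $q^{-1}$-grid structure of $\{R_{\bu,\bp,q}(U)\}_{\bp \bmod q}$ for the lattice-point estimates, and the same absorption of the error via $M^{a(m+n)-1}=o(L^m)$. The only cosmetic difference is that you sum over $(\bu,q)\in\mathcal Q_M$ jointly, whereas the paper fixes $\bu$, counts over the primes $q$ as in the $n=1$ case, and then sums over the $\asymp M^{n-1}$ choices of $\bu$.
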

\begin{proof}
The proof follows the same argument as Proposition~\ref{p:1-low}, so we only indicate the differences in the counting. In the present setting, the dangerous cubes are indexed by $\bb=(\bv,s)$, with the additional parameter $\bv\in\mathbb Z^{n-1}$. For $|\bb|<M^a$, there are $O(M^{a(n-1)})$ possible choices for $\bv$, and hence the number of dangerous cubes under consideration is
$$
O\left(M^{a(n-1)}\sum_{s<M^a}s^m\right)
=O\left(M^{a(m+n)}\right).
$$
The total measure of these dangerous cubes inside $J$ is controlled by the $(T,\eta_0,\delta)$-goodness of $J$, exactly as in Proposition~\ref{p:1-low}.

Now fix $\bu$ with $0<|\bu|\le M/4$. For each prime $M/2\le q\le M$, the fiber resonant points
$$
R_{\bu,\bp,q}(U),\qquad \bp\bmod q,
$$
form a translate of the $q^{-1}$-grid in $\T^m$. Applying the same counting argument as in Proposition~\ref{p:1-low}, the number of surviving fiber exact cubes corresponding to this fixed $\bu$ is
$$
\gg_\delta \frac{L^mM^{m+1}}{\log M},
$$
up to an error term of order
$$
O\left(\frac{M^{a(m+n)+m}}{\log M}\right).
$$
Here the exponent $a(m+n)$, instead of $a(m+1)$ in Proposition~\ref{p:1-low}, comes from the additional $O(M^{a(n-1)})$ choices for $\bv$. By the choice $a(m+n)<1$ and the assumption
$$
M^{a(m+n)-1}=o(L^m),
$$
this error term is negligible compared with the main term.

Finally, there are $\asymp M^{n-1}$ choices for $\bu$. Summing over $\bu$, we obtain
$$
\gg_{m,n,\delta}M^{n-1}\frac{L^mM^{m+1}}{\log M}=\frac{L^mM^{m+n}}{\log M}
$$
surviving exact cubes. Choosing $c_2=c_2(m,n,\delta)>0$ sufficiently small proves \eqref{eq:21-surv}.
\end{proof}

\subsection{Intermediate heights}

In this subsection, we count the exact cubes in $\mathscr{C}_M^\eta(U)$ that have nonempty intersection with dangerous cubes corresponding to denominator vectors $\bb\in\Z^n$ satisfying
\begin{equation*}
M^a\le |\bb|\le M.
\end{equation*}

For any $\mathbf a=(\mathbf u,q)\in\mathcal Q_M$ and $\mathbf b=(\mathbf v,s)\in\Z^n$ with $s\ne 0$, let
\begin{equation*}
N_{\mathbf a,\mathbf b}(U)=\#\left\{\mathbf p\bmod q: A_{\eta }(\mathbf u,\mathbf p,q;U)\cap D_{\eta_0}(\mathbf b,\mathbf r;U)\ne\emptyset, \ \ {\text{for some}}\ \mathbf r \bmod s\right\}.
\end{equation*}
Note that the count is taken over the whole fiber $\T^m$ and is not restricted to a parent cube $J$.  In particular, $N_{\mathbf a,\mathbf b}(U)$ is independent of $J$.

Rather than estimating $N_{\mathbf a,\mathbf b}(U)$ pointwisely for each $U$, we adopt an averaged point of view by integration over $U\in \Mat_{m,n-1}(\T)$. This yields an average bound for $N_{\mathbf a,\mathbf b}(U)$, and then Markov's inequality will give us the expected bound except a set of small measure on $U$.

Define
\begin{equation*}
N_<(U):=\sum_{\substack{\mathbf b=(\mathbf v,s)\in\Z^n\\ s\ne0,\ M^a\le|\mathbf b|\le M}}\ \sum_{\mathbf a\in\mathcal Q_M}N_{\mathbf a,\mathbf b}(U).
\end{equation*}

\begin{proposition}\label{p:2-int}
Let $L$ and $\xi$ be as in Proposition~\ref{p:2-tran}, let $M$ be sufficiently large depending on the fixed parameters $L$, $\xi$ and $\delta$. Then,
\begin{equation}\label{eq:22-ave-count}
\int N_<(U)\,d\mathcal L_{m(n-1)}(U)\le C\frac{M^{m+n}}{\log M}\sum_{h\ge M^a}h^{n-1}\psi(h)^m+o\left(\frac{M^{m+n}}{\log M}\right),
\end{equation}
where $C$ depends only on $m$ and $n$. In particular, outside an exceptional set $\mathcal X_1(\xi)$ of measure at most $\xi$, we have for any $U\notin\mathcal X_1(\xi)$,
\[
N_<(U)\le\frac{c_2L^mM^{m+n}}{4\log M},
\]
 where $c_2$ is given as in \eqref{eq:21-surv}.
\end{proposition}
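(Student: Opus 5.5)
We bound the integral by integrating each term $N_{\ba,\bb}(U)$ separately and then apply Markov's inequality. Fix $\ba=(\bu,q)\in\mathcal Q_M$ and $\bb=(\bv,s)$ with $s\neq0$ and $M^a\le|\bb|\le M$.

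\textbf{Reduction to closeness of resonant points.} If $A_\eta(\bu,\bp,q;U)\cap D_{\eta_0}(\bb,\br;U)\neq\emptyset$, then, exactly as in \eqref{eq:12-reso-close},
\[
\|R_{\bu,\bp,q}(U)-R_{\bv,\br,s}(U)\|<\frac{\psi(|\bb|)}{|s|}+\frac{\psi(q)}{q}=:\delta_{\ba,\bb}.
\]
Hence $N_{\ba,\bb}(U)$ is bounded by the quantity $N_{\ba,\bb}(U;\delta_{\ba,\bb})$ of Lemma~\ref{l:2-ave-count}.

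\textbf{Generic pairs, $\bw=s\bu-q\bv\neq\mathbf 0$.} Lemma~\ref{l:2-ave-count} gives
\[
\int N_{\ba,\bb}\,d\mathcal L_{m(n-1)}\le (2q\psi(|\bb|)+2|s|\psi(q))^m\ll M^m\psi(|\bb|)^m+|\bb|^m\psi(q)^m.
\]
Summing the first term over $\ba\in\mathcal Q_M$ (there are $\asymp M^n/\log M$ of them) and over $\bb$ (there are $\asymp h^{n-1}$ vectors of height $h$) produces the main term
\[
C\frac{M^{m+n}}{\log M}\sum_{h\ge M^a}h^{n-1}\psi(h)^m .
\]
For the second term, summing over $|\bb|\le M$ gives $\ll M^{m+n}$ per $\ba$. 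Multiplying by the $\asymp M^n/\log M$ choices of $\ba$ and using $\psi(q)^m\le M^{-m(\tau_\psi-\varepsilon)}$ with $m(\tau_\psi-\varepsilon)>n$ (see \eqref{eq:2-sum-ve}), the total is $M^{m+2n-m(\tau_\psi-\varepsilon)}/\log M=o(M^{m+n}/\log M)$.

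Unlike the $n=1$ case, there is no ``$+1$'' error term here, because the averaging in Lemma~\ref{l:2-haar-ave} is exact. This is why the troublesome $m=1$ analysis of Proposition~\ref{p:1-int}, with Denjoy--Koksma, disappears.

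\textbf{Degenerate pairs, $\bw=\mathbf 0$.} Here $s\bu=q\bv$. Since $q$ is prime and $q\nmid\bu$ (because $0<|\bu|<q$), we get $q\mid s$. With $|s|\le M<2q$ this forces $s=\pm q$, so $\bb=\pm\ba$.
\begin{itemize}
\item For $\bb=\ba$: the resonant points $R_{\bu,\br,q}$ with $\br\not\equiv\bp$ are $1/q$-separated from $R_{\bu,\bp,q}$. When $\br=\bp$, the cube $A_\eta$ avoids the ball of radius $\eta_0\psi(q)/q$ by definition. So the contribution is $0$.
\item For $\bb=-\ba$: the set $D_{\eta_0}(-\ba,\br;U)$ is of the type handled by Lemma~\ref{l:2-resonant-2} with $k=-1$, so it does not meet $A_\eta$ once $M$ is large.
\end{itemize}
Alternatively, this contributes at most $\#\mathcal Q_M=o(M^{m+n}/\log M)$ in total, which is negligible anyway. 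This gives \eqref{eq:22-ave-count}.

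\textbf{Markov step.} Since $\sum_h h^{n-1}\psi(h)^m<\infty$, the tail $\sum_{h\ge M^a}h^{n-1}\psi(h)^m$ tends to $0$. So for $M$ large depending on $L,\xi,\delta$, the right side of \eqref{eq:22-ave-count} is at most $\xi\cdot c_2L^mM^{m+n}/(4\log M)$. Markov's inequality then yields an exceptional set $\mathcal X_1(\xi)$ of measure at most $\xi$ outside of which $N_<(U)\le c_2L^mM^{m+n}/(4\log M)$.

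I expect the only delicate step to be the degenerate case $\bw=\mathbf 0$, where Lemma~\ref{l:2-haar-ave} does not apply. It is disposed of by the primality of $q$ together with $|\bu|<q$.
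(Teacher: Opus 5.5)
Your proof is correct and follows the paper's argument: you split according to whether $\bw=s\bu-q\bv$ vanishes and apply Lemma~\ref{l:2-ave-count} when $\bw\neq\mathbf 0$. When $\bw=\mathbf 0$, primality of $q$ forces $\bb=\pm\ba$, where the intersections are empty (Lemma~\ref{l:2-resonant-2}); you then sum over $\ba,\bb$ and finish with Markov. One correction: your ``alternatively'' fallback for the degenerate pairs is wrong, since a priori $N_{\ba,\pm\ba}(U)$ can be as large as $q^m$, so the trivial total is of order $M^{m+n}/\log M$ rather than $\#\mathcal Q_M$, and the disjointness argument is genuinely needed there.
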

\begin{proof}
Fix $\mathbf a=(\mathbf u,q)\in\mathcal Q_M$, $\mathbf b=(\mathbf v,s)\in\mathbb Z^n\setminus\{0\}, $ where $M^a\le |\mathbf b|=h\le M, s\ne0.$ Let $\mathbf w=s\mathbf u-q\mathbf v.$ We estimate the integration of $N_{\mathbf a,\mathbf b}(U)$ according to
$\bw\ne0 {\text{ or }}\bw=0.$

\begin{itemize}
\item When $\mathbf w=0$. If $|s|\ne q$, then $(q,s)=1$, and the identity
$
s\mathbf u=q\mathbf v
$
would imply $q\mid\mathbf u$, a contradiction. Hence $|s|=q$, and therefore $\mathbf b=\pm\mathbf a$. By Corollary~\ref{l:2-resonant-2}, the corresponding intersection is empty and so $N_{\mathbf a,\mathbf b}(U)=0$.

\item When $\mathbf w\ne0$. Any non-empty intersection $A_{\eta}(\mathbf u,\mathbf p,q;U)\cap D_{\eta_0}(\mathbf b,\mathbf r;U)$ implies that
\[
\|R_{\bu,\bp,q}(U)-R_{\bv,\br,s}(U)\|\le\frac{\eta_0\psi(h)}{|s|}+\frac{\psi(q)}{q}.
\]
Hence, by Lemma~\ref{l:2-ave-count},
we obtain
\begin{align*}
\int N_{\mathbf a,\mathbf b}(U)\dif\mathcal L_{m(n-1)}(U)&\le\left(2q|s|\left(\frac{\eta_0\psi(h)}{|s|}+\frac{\psi(q)}{q}\right)\right)^m\ll
M^m\bigl(\psi(h)^m+\psi(q)^m\bigr).
\end{align*}
\end{itemize}

Combining the above cases and summing over all $\mathbf a$ and $\mathbf b$, we obtain
\begin{align*}
\int N_<(U)\dif\mathcal L_{m(n-1)}(U)
&\ll
\sum_{M^a\le h\le M}
h^{n-1}\frac{M^n}{\log M}
\cdot
M^m
\left(
\psi(h)^m+M^{-m(\tau_\psi-\varepsilon)}
\right)\\
&=
\frac{M^{m+n}}{\log M}
\sum_{h\ge M^a}h^{n-1}\psi(h)^m
+
o\left(\frac{M^{m+n}}{\log M}\right),
\end{align*}
where the $o(\cdot)$ term follows from $m(\tau_\psi-\varepsilon)>n$. This proves \eqref{eq:22-ave-count}.

Note that the series over $h$ converges, so by asking $M$ sufficiently large, we can ensure that
\[
\int N_<(U)\,d\mathcal L_{m(n-1)}(U)\le\frac{c_2\xi L^mM^{m+n}}{4\log M}.
\]
Then Markov's inequality is applied to conclude that the exceptional set
\[
\mathcal X_1(\xi):=\left\{U\in\Mat_{m,n-1}(\T): N_<(U)>\frac{c_2L^mM^{m+n}}{4\log M}\right\}
\]
has measure at most $\xi$.
\end{proof}

\begin{remark}
The exceptional set depends only on the side length of a cube, not on its location.
\end{remark}

\section{Proof of Proposition \ref{p:2-tran}$(b)$}

Recall the definition of $\rho$ in \eqref{eq:2-rho}. For $U\in\Mat_{m,n-1}(\T)$, denote $E_M(U)$ the collection of fiber resonant points in $\mathscr R_M(U)$ which is $M^{-\rho}$ close to some other resonant points in $\mathscr R_M(U)$, i.e.
\begin{align*}
E_M(U):=\Bigl\{R_{\bu,\bp,q}(U)&\in\mathscr R_M(U):\ \exists\,R_{\bu',\bp',q'}(U)\in\mathscr R_M(U)\text{ such that }\\ 
& R_{\bu,\bp,q}(U)\ne R_{\bu',\bp',q'}(U),\ \text{and}\ \bigl\|R_{\bu,\bp,q}(U)-R_{\bu',\bp',q'}(U)\bigr\|<M^{-\rho} \Bigr\}.
\end{align*}

\begin{proposition}\label{p:2-sep} Let $L$ and $\rho$ be as in Proposition~\ref{p:2-tran} and \eqref{eq:2-rho}, respectively. For all sufficiently large $M$ depending on $L$ and $\rho$,
\begin{equation*}
		\int \#E_M(U)\,d\mathcal L_{m(n-1)}(U)=o\left(\frac{L^mM^{m+n}}{\log M}\right).
\end{equation*}
In particular, outside an exceptional set $\mathcal X_2(\xi)$ of measure at most $\xi$, with $\xi$ as in Proposition~\ref{p:2-tran}, one may discard at most
\[
\#E_M(U)\le\frac{c_2L^mM^{m+n}}{8\log M}\qquad (U\notin\mathcal X_2(\xi))
\]
resonant points from $\mathscr R_M(U)$ so that the remaining resonant points are $M^{-\rho}$-separated, where $c_2$ is given as in \eqref{eq:21-surv}.  Consequently, the exact cubes corresponding to the remaining resonant points are pairwise $\gg M^{-\rho}$-separated.
\end{proposition}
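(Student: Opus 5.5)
We bound $\#E_M(U)$ by the number of ordered pairs of distinct close resonant points, integrate over $U$, and then apply Markov's inequality, exactly as in Proposition~\ref{p:2-int}. Every $R_{\bu,\bp,q}(U)\in E_M(U)$ has a partner $R_{\bu',\bp',q'}(U)$ with $\ba=(\bu,q)$, $\ba'=(\bu',q')\in\mathcal Q_M$. Hence
\[
\#E_M(U)\le\sum_{\ba,\ba'\in\mathcal Q_M}\#\Bigl\{\bp\bmod q:\ \exists\,\bp'\bmod q',\ 0<\|R_{\bu,\bp,q}(U)-R_{\bu',\bp',q'}(U)\|<M^{-\rho}\Bigr\}.
\]
We split the pairs $(\ba,\ba')$ according to whether $\bw:=q'\bu-q\bu'$ vanishes or not.

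\emph{Case $\bw=0$.} Since $q,q'$ are primes exceeding $|\bu|,|\bu'|$, the relation $q'\bu=q\bu'$ forces $q=q'$ and $\bu=\bu'$ (as in the proof of Proposition~\ref{p:2-int}, using $\bu\ne\mathbf 0$). Then the two points lie on the same translate of the $q^{-1}$-grid and are distinct, so they are at distance $\ge 1/q\gg M^{-1}>M^{-\rho}$. Such pairs contribute nothing.

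\emph{Case $\bw\neq0$.} Lemma~\ref{l:2-ave-count} applies with $\delta=M^{-\rho}$, $s=q'$, and gives for each pair
\[
\int N_{\ba,\ba'}(U;M^{-\rho})\dif\mathcal L_{m(n-1)}(U)\le(2qq'M^{-\rho})^m\ll M^{m(2-\rho)}.
\]
The fibre-averaging is what replaces the trivial ``$+1$'' term that appeared in the $n=1$ case; there is no lattice-point error term here. Summing over the $\ll M^{2n}/(\log M)^2$ pairs in $\mathcal Q_M\times\mathcal Q_M$ yields
\[
\int\#E_M(U)\dif\mathcal L_{m(n-1)}(U)\ll\frac{M^{2n+2m-m\rho}}{(\log M)^2}.
\]
Dividing by $M^{m+n}/\log M$ leaves $M^{n+m-m\rho}/\log M$. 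This tends to $0$ precisely because $\rho>1+n/m$, by \eqref{eq:2-rho}. Since $L$ is fixed, for $M$ large the integral is $o(L^mM^{m+n}/\log M)$, and in fact it is at most $\xi c_2L^mM^{m+n}/(8\log M)$.

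Markov's inequality then gives a set $\mathcal X_2(\xi)$ of measure at most $\xi$ outside of which $\#E_M(U)\le c_2L^mM^{m+n}/(8\log M)$. Discarding $E_M(U)$ leaves points that are pairwise at distance $\ge M^{-\rho}$. Each fibre exact cube lies within $\ll\psi(q)/q\ll M^{-(1+\tau_\psi-\varepsilon)}=o(M^{-\rho})$ of its resonant point, because $\rho<1+\tau_\psi-\varepsilon$. The corresponding cubes are therefore $\gg M^{-\rho}$-separated.

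The only delicate point is the $\bw=0$ case. We must use the primality and the size constraint $|\bu|\le M/4<q$ to rule out coincident fibre resonant points coming from different denominator vectors, so that Lemma~\ref{l:2-ave-count} always applies.
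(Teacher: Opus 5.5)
Your proposal is correct and follows essentially the same route as the paper. You split the pairs $(\ba,\ba')\in\mathcal Q_M\times\mathcal Q_M$ according to whether $\bw=q'\bu-q\bu'$ vanishes, dispose of $\bw=0$ via primality and $0<|\bu|<q$, apply Lemma~\ref{l:2-ave-count} with $\delta=M^{-\rho}$ when $\bw\ne0$, sum using $\rho>1+n/m$, and conclude with Markov's inequality together with $\psi(q)/q=o(M^{-\rho})$, exactly as the paper does.
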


\begin{proof} Fix $\ba=(\bu,q), \ba'=(\bu',q')\in\mathcal Q_M,$ and let $\bw=q'\bu-q\bu'$. We estimate the average number of $M^{-\rho}$-close pairs according to whether $\bw=0$ or $\bw\ne0$.

\begin{itemize}
\item When $\bw=0$. If $q\ne q'$, then $(q,q')=1$, and the identity
\[
q'\bu=q\bu'
\]
would imply $q\mid\bu$, a contradiction. Hence $q=q'$, and therefore $\bu=\bu'$. In this case, distinct resonant points are at least
$1/q$-separated, and hence cannot be $M^{-\rho}$-close for sufficiently large $M$, since $\rho>1$. Thus this case gives no contribution.

\item When $\bw\ne0$. Applying Lemma~\ref{l:2-ave-count} with $\delta=M^{-\rho},$ we obtain
\begin{align*}
&\int\#\left\{\bp\bmod q: \|R_{\bu,\bp,q}(U)-R_{\bu',\bp',q'}(U)\|<M^{-\rho}\text{ for some }\mathbf \bp' \bmod q'\right\}\dif\mathcal L_{m(n-1)}(U)\\
\le&\ (2qq'M^{-\rho})^m \ll M^{m(2-\rho)}.
\end{align*}
\end{itemize}
Summing over all $\ba,\ba'\in\mathcal Q_M,$
and using $\#\mathcal Q_M\asymp\frac{M^n}{\log M},$
we obtain
\begin{align*}
\int \#E_M(U)\dif\mathcal L_{m(n-1)}(U)&\ll \left(\frac{M^n}{\log M}\right)^2M^{m(2-\rho)}=\frac{M^{2n+2m-m\rho}}{(\log M)^2}\\
&=o\left(\frac{L^mM^{m+n}}{\log M}\right),
\end{align*}
since $\rho>1+n/m$.

Finally, Markov's inequality gives the exceptional set $\mathcal X_2(\xi)$ and the desired bound outside $\mathcal X_2(\xi)$ by the same argument as in Proposition~\ref{p:2-int}.
\end{proof}

\begin{remark}
Again, the exceptional set depends only on the side length of a cube, not on its location.
\end{remark}

\section{Proof of Proposition \ref{p:2-tran}$(c)$}\label{s:2-large}

We now turn to the goodness condition for the exact cubes. The idea for the proof is similar to $n=1$ case. The only difference is that we work on fibers and take integration over   $\Mat_{m,n-1}(\T)$ to arrive at the averaged estimation. Then Markov's inequality is used to conclude that except for a set of small measure of fibers, there are sufficiently many good cubes on fibers.

Recall the definition of $\mathcal E_\eta(\mathbf a)$ and $\mathcal D_\eta(\mathbf b)$:
\begin{align*}
\mathcal E_\eta(\mathbf a)&=\left\{A\in\Mat_{m,n}(\T): A\mathbf a-\boldsymbol\theta\in\left[\eta\psi(q),\,\eta\psi(q)+q\ell_M(\eta)\right]^m \pmod{\Z^m}\right\}\\
&=\bigcup_{\mathbf p\bmod q}\left\{(U,\mathbf z)\in\Mat_{m,n}(\T):\mathbf z\in A_\eta(\mathbf u,\mathbf p,q;U)\right\};\\
\mathcal D_\eta(\mathbf b)=\{A\in\Mat_{m,n}&(\T):\|A\mathbf b-\boldsymbol\theta\|<\eta\psi(|\mathbf b|)\}= \bigcup_{\mathbf r\bmod s}\left\{(U,\mathbf z)\in\Mat_{m,n}(\T):\mathbf z\in\mathcal{D}_\eta(\mathbf b,\mathbf r;U)\right\}.
\end{align*}
On each fiber $U$, $\mathcal E_\eta(\mathbf a)$ and $\mathcal D_\eta(\mathbf b)$ are precisely the corresponding exact cubes and dangerous cubes.
For each $\mathbf a\in \mathcal{Q}_M$ and $\bb\in\mathbb Z^n\setminus\{\bf 0\}$, their measure are given as
\begin{equation*}
\mathcal L_{mn}\left(\mathcal E_\eta(\mathbf a)\right)=q^m\ell_M(\eta)^m\asymp M^m\ell_M(\eta)^m, \ \	
\mathcal L_{mn}\left(\mathcal D_\eta(\mathbf b)\right)=\left(2\eta\psi(|\mathbf b|)\right)^m\asymp\eta^m\psi(|\mathbf b|)^m.
\end{equation*}

Write $S_M(\eta)$ for the total measure of $\mathcal E_\eta(\mathbf a)$ over $\mathbf a\in\mathcal Q_M$:
\begin{equation}\label{eq:24-mass}
S_M(\eta)=\sum_{\mathbf a\in\mathcal Q_M}\mathcal L_{mn}(\mathcal E_\eta(\mathbf a))
\asymp(1-\eta)^m\frac{M^{n+m}}{\log M}\ell_M(\eta)^m.
\end{equation}

We next show that the part of this mass lying in dangerous sets corresponding to denominator vectors of height greater than $M$ is negligible.
\begin{lemma} For every fixed $0<\eta<1$ and all sufficiently large $M$ depending on $\eta$,
\begin{equation}\label{eq:24-overlap}
\sum_{\mathbf a\in\mathcal Q_M}\sum_{\substack{\mathbf b=(\mathbf v,s)\in\Z^n\\ s\ne0,\ |\mathbf b|>M}}\mathcal L_{mn}\left(\mathcal E_\eta(\mathbf a)\cap\mathcal D_\eta(\mathbf b)\right)=o_\eta(S_M(\eta)).
\end{equation}
\end{lemma}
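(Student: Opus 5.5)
Since $\mathcal L_{mn}(\mathcal E_\eta(\mathbf a)\cap\mathcal D_\eta(\mathbf b))=\int\mathcal L_m\bigl(\mathcal E_\eta(\mathbf a)_U\cap\mathcal D_\eta(\mathbf b)_U\bigr)\dif\mathcal L_{m(n-1)}(U)$ by Fubini, I would estimate each summand fiberwise and then average over $U$ with Lemma~\ref{l:2-ave-count}, following the $n=1$ argument. The plan is to split the pairs $(\mathbf a,\mathbf b)$, $\mathbf a=(\mathbf u,q)\in\mathcal Q_M$, $\mathbf b=(\mathbf v,s)$, $|\mathbf b|=h>M$, according to whether $\mathbf w:=s\mathbf u-q\mathbf v$ vanishes, exactly as in Propositions~\ref{p:2-int} and \ref{p:2-sep}.

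\emph{Case $\mathbf w=0$.} Here $s\mathbf u=q\mathbf v$. Since $q$ is prime and $0<|\mathbf u|<q$, we get $q\mid s$ and hence $\mathbf b=k\mathbf a$ with $|k|\ge2$ and $h=|k|q$. The argument is the analogue of $\mathcal I_M^{(1)}$. Lemma~\ref{l:2-resonant-2} shows that the terms with $|k|\le K$ vanish once $M$ is large. For $|k|>K$ I would compute the intersection directly on the torus. On $\mathcal E_\eta(\mathbf a)$ the value $A\mathbf a$ lies, mod $\Z^m$, in a fixed box of side $q\ell_M(\eta)$. The condition $A\mathbf b=kA\mathbf a$ is then a condition on $A\mathbf a$ alone, so the pushforward of Haar measure under $A\mapsto A\mathbf a$ (Lemma~\ref{l:2-haar-push}) reduces the problem to an $m$-dimensional count. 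This gives
\[
\mathcal L_{mn}(\mathcal E_\eta(\mathbf a)\cap\mathcal D_\eta(k\mathbf a))\ll \bigl(|k|q\ell_M(\eta)+1\bigr)^m\Bigl(\frac{\psi(|k|q)}{|k|}\Bigr)^m,
\]
the same grid count as in the $n=1$ case. Dividing by $\mathcal L_{mn}(\mathcal E_\eta(\mathbf a))\asymp (q\ell_M)^m$ and using \eqref{eq:2-regular} bounds the ratio by $|k|^{-m(\tau_\psi-\varepsilon)}+|k|^{-m(1+\tau_\psi-\varepsilon)}$. This is summable in $k$ because $m(\tau_\psi-\varepsilon)>n\ge2$, so the tail over $|k|>K$ is small relative to $S_M(\eta)$.

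\emph{Case $\mathbf w\neq0$.} Fix $U$. A nonempty fiber intersection forces the resonant points to be $\delta$-close with $\delta=\psi(q)/q+\eta\psi(h)/|s|$. Each such pair contributes at most $\mathcal L_m(D)\ll(\psi(h)/|s|)^m$. Applying Lemma~\ref{l:2-ave-count}, the average number of such pairs is at most $(2q|s|\delta)^m\ll (|s|\psi(q)+q\psi(h))^m$. Integrating, the $(\mathbf a,\mathbf b)$ term is
\[
\ll \bigl(|s|\psi(q)+M\psi(h)\bigr)^m\Bigl(\frac{\psi(h)}{|s|}\Bigr)^m\ll \psi(q)^m\psi(h)^m+M^m\frac{\psi(h)^{2m}}{|s|^m}.
\]
One step needs care: when $|s|$ is small compared with $h$, the factor $1/|s|^m$ is large. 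I would first try to bound the dangerous cube volume on the fiber differently, noting that $\mathcal L_m(D_\eta(\mathbf b,\mathbf r;U))\le1$ and that $|s|^m$ residues $\mathbf r$ appear, so that summing over $\mathbf r$ gives total fiber mass $\ll\psi(h)^m$ uniformly. Concretely, I would count the intersecting pairs $(\mathbf p,\mathbf r)$ weighted by $(\psi(h)/|s|)^m$ and bound the average by $(q\psi(h)+|s|\psi(q))^m(\psi(h)/|s|)^m$. Summing over the $\asymp h^{n-2}$ values of $\mathbf v$ for each $s$, over $|s|\le h$, over $h>M$, and over $\#\mathcal Q_M\asymp M^n/\log M$, then using \eqref{eq:2-regular} together with $\psi(q)\ll M^{2\varepsilon}\psi(M)$, I expect a bound of the form
\[
\frac{M^n}{\log M}\,\psi(M)^m\,o(1).
\]
Since $S_M(\eta)\asymp_\eta M^{n}\psi(M)^m/\log M$, this is $o_\eta(S_M(\eta))$.

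\emph{Main obstacle.} The hard part is exactly this $\mathbf w\neq0$ sum for small $|s|$, for which the $\ell^\infty$-ball $D$ has radius $\eta\psi(h)/|s|$ that may be large. I would handle it by splitting $|s|\le h^{1-\kappa}$ from $|s|>h^{1-\kappa}$. In the small-$|s|$ range I would avoid the fiber counting altogether and bound $\mathcal L_{mn}(\mathcal E_\eta(\mathbf a)\cap\mathcal D_\eta(\mathbf b))$ by the independence of $A\mathbf a$ and $A\mathbf b$ mod $\Z^m$ when $\mathbf a,\mathbf b$ are linearly independent. By Lemma~\ref{l:2-haar-push} applied to $A\mapsto(A\mathbf a,A\mathbf b)$, whose image is a closed subgroup of $\T^{2m}$ of index $\gcd$ of the $2\times2$ minors, this yields the product bound $\ll(q\ell_M)^m\psi(h)^m\cdot g^{m}$, where $g$ is that gcd. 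The remaining difficulty is to control the sum of these gcd factors, which are on average bounded. Then $\sum_{h>M}h^{n-1}\psi(h)^m\to0$ finishes the proof.
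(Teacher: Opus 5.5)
\textbf{Verdict: there is a genuine gap, in the case $\mathbf w\neq0$.}

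Your treatment of the parallel case $\mathbf w=0$ is fine. The grid count via the pushforward under $A\mapsto A\mathbf a$ is a slightly sharper version of the paper's trivial bound $\mathcal L_{mn}(\mathcal E_\eta(\mathbf a)\cap\mathcal D_\eta(k\mathbf a))\le\mathcal L_{mn}(\mathcal D_\eta(k\mathbf a))$, and either one suffices together with \eqref{eq:2-regular}.

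In the case $\mathbf w\neq0$, as you suspected, the fiber count with per-pair bound $(\psi(h)/|s|)^m$ does not close for small $|s|$. For example, take $\psi(h)=h^{-\tau_\psi}$ and $m\ge2$. Then your bound for the terms with $|s|=1$ alone is $\asymp\psi(M)^mM^{m+n-1-m\tau_\psi}$ per $\mathbf a$. This is not $o(\psi(M)^m)$ when $n/m<\tau_\psi\le 1+(n-1)/m$.

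The patch you propose rests on a false claim followed by an unproved one.
\begin{enumerate}
\item For linearly independent $\mathbf a,\mathbf b$, the image of $A\mapsto(A\mathbf a,A\mathbf b)\bmod\Z^{2m}$ is not a subgroup of index $g$. The torus $\T^{2m}$ is connected, so it has no proper closed subgroups of finite index. In fact the map is onto, because its linear lift $\R^{mn}\to\R^{2m}$ is onto. The gcd $g$ of the $2\times2$ minors only governs the number of connected components of the kernel, not the image.
\item The assertion that the factors $g^m$ are ``on average bounded'' is false. For $n=2$ one has $g=|s\mathbf u-q\mathbf v|=|\det(\mathbf a,\mathbf b)|$, which is typically of size $\asymp Mh$.
\end{enumerate}
So if that factor were really present your argument would not close, and as written the hard case is not proved.

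The missing ingredient is exact pairwise independence, which is what the paper uses (citing Sprind\v{z}uk). Since $A\mapsto(A\mathbf a,A\mathbf b)$ is a continuous surjective homomorphism onto $\T^{2m}$, Lemma~\ref{l:2-haar-push} gives
\[
\mathcal L_{mn}(\mathcal E_\eta(\mathbf a)\cap\mathcal D_\eta(\mathbf b))=\mathcal L_{mn}(\mathcal E_\eta(\mathbf a))\,\mathcal L_{mn}(\mathcal D_\eta(\mathbf b))
\]
for every $\mathbf b$ with $\mathbf w\neq0$, with no gcd factor. Summing over $\mathbf a$ and $\mathbf b$ then gives $\ll S_M(\eta)\sum_{h>M}h^{n-1}\psi(h)^m=o(S_M(\eta))$ immediately. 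So neither the fiber count nor the split at $|s|=h^{1-\kappa}$ is needed.

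Your fiber count can also be repaired directly. Bound each intersection by $\min\{\ell_M(\eta)^m,(2\eta\psi(h)/|s|)^m\}$, and pair $\ell_M(\eta)^m$ with the $q\psi(h)$ part of $(2q|s|\delta)^m$. This reproduces the same product bound fiber by fiber, while the $|s|\psi(q)$ part is handled as you already planned.
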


\begin{proof}
We split the argument according to whether $\mathbf a$ and $\mathbf b$ are linearly independent or parallel.
\begin{itemize}
\item When $\mathbf a$ and $\mathbf b$ are linearly independent. It follows from \cite[Chapter~1, Lemma 9]{Spr} that
\[
\mathcal L_{mn}\left(\mathcal E_\eta(\mathbf a)\cap\mathcal D_\eta(\mathbf b)\right)=\mathcal L_{mn}(\mathcal E_\eta(\mathbf a))\mathcal L_{mn}(\mathcal D_\eta(\mathbf b)).
\]
Hence, summing over $\mathbf a\in\mathcal Q_M$ and all linearly independent $\mathbf b=(\mathbf v,s)$ with $s\ne 0$ and $|\mathbf b|>M$,
we obtain
\begin{align*}
&\sum_{\mathbf a\in\mathcal Q_M}\sum_{\substack{\mathbf b=(\mathbf v,s)\in\Z^n\\ s\ne0,\ |\mathbf b|>M\\ \mathbf a,\mathbf b\ \text{linearly independent}}}\mathcal L_{mn}\left(\mathcal E_\eta(\mathbf a)\cap\mathcal D_\eta(\mathbf b)\right)\\
\ll{} & S_M(\eta)\sum_{h>M}h^{n-1}\psi(h)^m=o\bigl(S_M(\eta)\bigr).
\end{align*}

\item When $\mathbf a$ and $\mathbf b$ are parallel. Since every $\mathbf a\in \mathcal{Q}_M$ is primitive, any vector parallel with $\mathbf a$ can be written as $\mathbf b=k\mathbf a$. Since $|\mathbf b|> M$, one has $k\ge 2$.

At first, by Lemma~\ref{l:2-resonant-2}, for any fixed $K$, by asking $M$ sufficiently large, one has that for all $k\le K$,
$$\mathcal E_\eta(\mathbf a)\cap \mathcal D_\eta(k\mathbf a)= \emptyset.$$
So, we need only care about the integers $k$ sufficiently large.

It is trivial that
\[
\mathcal L_{mn}\left(\mathcal E_\eta(\mathbf a)\cap\mathcal D_\eta(k\mathbf a)\right)\le\mathcal L_{mn}\left(\mathcal D_\eta(k\mathbf a)\right)\asymp\eta^m\psi(|k|q)^m.
\]
Hence,
\begin{align*}
\frac{\mathcal L_{mn}\left(\mathcal E_\eta(\mathbf a)\cap\mathcal D_\eta(k\mathbf a)\right)}{\mathcal L_{mn}\left(\mathcal E_\eta(\mathbf a)\right)}
&\ll_{\eta}\left(\frac{\psi(|k|q)}{\psi(M)}\right)^m\ll_{\eta,\varepsilon}|k|^{-m(\tau_\psi-\varepsilon)},
\end{align*}
where we used \eqref{eq:2-regular}. By choosing $K$ sufficiently large, we conclude that	
\[
\sum_{|k|>K}\mathcal L_{mn}\left(\mathcal E_\eta(\mathbf a)\cap\mathcal D_\eta(k\mathbf a)\right)\ll_{\eta,\varepsilon}\delta\, \mathcal L_{mn}\left(\mathcal E_\eta(\mathbf a)\right),
\] where the constant implied in $\ll$ is independent of $\mathbf a$. So summing over $\mathbf a\in\mathcal Q_M$, we get the desired result \eqref{eq:24-overlap}.\qedhere
\end{itemize}
\end{proof}

We now use \eqref{eq:24-overlap} to show that, for most parameters $U$, only a small number of the exact cubes in $\mathscr C_M^\eta(U)$ fail the required goodness condition.

For $U\in\Mat_{m,n-1}(\T)$, let $\mathcal O_M(U)$ denote the total overlap of all exact cubes in $\mathscr C_M^\eta(U)$ with the dangerous cubes corresponding to vectors $\mathbf b=(\mathbf v,s)\in\Z^n, s\ne0,  |\mathbf b|>M.$ More precisely,
\begin{align*}
\mathcal O_M(U)&=\sum_{\mathbf a\in\mathcal Q_M}\ \sum_{\mathbf p\bmod q}\sum_{\substack{\mathbf b=(\mathbf v,s)\in\Z^n\\ s\ne0,\ |\mathbf b|>M}}\ \sum_{\mathbf r\bmod s}\mathcal{L}_m\Big(A_\eta(\mathbf u,\mathbf p,q;U)\cap D_{\eta}(\mathbf b, \mathbf r, U)\Big)\\
&:=\sum_{\mathbf a\in\mathcal Q_M}\ \sum_{\mathbf p\bmod q}\mathfrak B_M^{\eta}(A_\eta(\mathbf u,\mathbf p,q;U)).
\end{align*}
By Fubini's theorem,
\begin{align}\label{eq:24-overlap-fubini}
\int\mathcal O_M(U)\,d\mathcal L_{m(n-1)}(U)&=\sum_{\mathbf a\in\mathcal Q_M}\sum_{\substack{\mathbf b=(\mathbf v,s)\in\Z^n\\s\ne0,\ |\mathbf b|>M}}\mathcal L_{mn}\left(\mathcal E_\eta(\mathbf a)\cap\mathcal D_\eta(\mathbf b)\right).
\end{align}

\begin{proposition}\label{p:2-large} Let $L$, $\xi$ and $\eta$ be as in Proposition~\ref{p:2-tran}.  For every sufficiently large $M$ depending on $L$, $\bm\theta$ and $\eta$, there exists an exceptional set $\mathcal X_3(\xi)\subset\Mat_{m,n-1}(\T)$ of Lebesgue measure at most $\xi$ such that, for every $	U\notin\mathcal X_3(\xi),$ at most
\[
\frac{c_2L^mM^{m+n}}{8\log M}
\]
exact cubes in $\mathscr C_M^\eta(U)$ fail to be $(M,\eta,\delta)$-good with respect to $U$, where $c_2$ is given as in \eqref{eq:21-surv}.
\end{proposition}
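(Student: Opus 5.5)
The plan is to mimic the final counting step of the $n=1$ case, now averaged over fibers. By \eqref{eq:24-overlap-fubini} together with \eqref{eq:24-overlap},
\[
\int\mathcal O_M(U)\,d\mathcal L_{m(n-1)}(U)=o_\eta\bigl(S_M(\eta)\bigr).
\]
Recall from \eqref{eq:24-mass} that $S_M(\eta)\asymp_\eta \frac{M^{m+n}}{\log M}\ell_M(\eta)^m$. (The displayed formula carries a spurious extra factor $(1-\eta)^m$, but the only thing used is that $S_M(\eta)/\ell_M(\eta)^m\asymp_\eta M^{m+n}/\log M$. This follows from $\#\mathcal Q_M\asymp M^n/\log M$ and $\mathcal L_{mn}(\mathcal E_\eta(\mathbf a))=q^m\ell_M(\eta)^m$.) Since $L$, $\xi$, $\eta$, $\delta$ are fixed and the $o(\cdot)$ tends to zero as $M\to\infty$, for $M$ large we obtain
\[
\int\mathcal O_M(U)\,d\mathcal L_{m(n-1)}(U)\le \frac{\xi\,\delta\, c_2L^m}{8}\cdot\frac{M^{m+n}}{\log M}\ell_M(\eta)^m .
\]

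Next I convert overlap into a count, pointwise in $U$. Each fiber exact cube $A_\eta(\mathbf u,\mathbf p,q;U)$ has measure $\ell_M(\eta)^m$. If it fails to be $(M,\eta,\delta)$-good with respect to $U$, then $\mathfrak B_M^\eta(A_\eta(\mathbf u,\mathbf p,q;U);U)>\delta\,\ell_M(\eta)^m$. Because the dangerous cubes in $\mathfrak B_M^\eta$ range over $s\neq0$, $|\mathbf b|>M$, $\mathbf r\bmod s$, exactly as in $\mathcal O_M(U)$, summing over all cubes of $\mathscr C_M^\eta(U)$ gives
\[
\#\{\text{non-good cubes in }\mathscr C_M^\eta(U)\}\le \frac{\mathcal O_M(U)}{\delta\,\ell_M(\eta)^m}.
\]
One small point needs checking here: $\mathfrak B_M^\eta$ for fibers sums $\mathbf r$ modulo $s$ on $\T^m$, matching the definition of $\mathcal O_M(U)$. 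Also, $\eta$ plays the role of both the dangerous and the exact parameter, which is consistent with (c).

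Finally apply Markov's inequality. Define
\[
\mathcal X_3(\xi):=\Bigl\{U:\ \mathcal O_M(U)>\frac{\delta\, c_2L^m M^{m+n}}{8\log M}\ell_M(\eta)^m\Bigr\}.
\]
Then $\mathcal L_{m(n-1)}(\mathcal X_3(\xi))\le\xi$ by the integral bound above. For $U\notin\mathcal X_3(\xi)$, the number of non-good cubes is at most $c_2L^mM^{m+n}/(8\log M)$, as claimed. As in the earlier remarks, $\mathcal X_3$ depends only on $L$ (and on the fixed parameters), not on the location of any parent cube.

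The only genuine input is \eqref{eq:24-overlap}, which was already established via Sprindžuk's independence lemma and Lemma~\ref{l:2-resonant-2}. So the main obstacle is just bookkeeping. One must make sure the $o_\eta(\cdot)$ is uniform enough that a threshold $M_0(L,\xi,\delta,\eta,\boldsymbol\theta)$ exists; the dependence on $\boldsymbol\theta$ enters through Lemma~\ref{l:2-resonant-2}. One must also make sure the measurability of $U\mapsto\mathcal O_M(U)$ is clear, which holds since it is a countable sum of measurable functions by Fubini.
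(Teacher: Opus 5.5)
Your proposal is correct and is essentially the paper's argument: you average the overlap over fibers via \eqref{eq:24-overlap-fubini} and \eqref{eq:24-overlap}, then convert overlap into a count of non-good cubes pointwise in $U$ using $\mathfrak B_M^\eta(Q;U)>\delta\ell_M(\eta)^m$, and finish with Markov's inequality. The only cosmetic difference is that you apply Markov to $\mathcal O_M(U)$ rather than to the count $N_>(U)$ of non-good cubes, which is equivalent; your remark that the factor $(1-\eta)^m$ in \eqref{eq:24-mass} is spurious but harmless is also right.
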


\begin{proof}
For an exact cube $Q\in \mathscr C_M^\eta(U)$, if it fails to be $(M,\eta,\delta)$-good with respect to $U$, then
\begin{equation}\label{eq:24-bad-child}
\mathfrak B_M^{\eta}(Q)>\delta\mathcal L_m(Q)=\ell_M(\eta)^m.
\end{equation}

Let
\[
N_>(U):=\#\left\{Q: Q\text{ is not }(M,\eta,\delta)\text{-good with respect to }U\right\}.
\]
Summing \eqref{eq:24-bad-child} over all such exact cubes $Q$, we obtain
\[N_>(U)\, \delta\ell_M(\eta)^m\le\mathcal O_M(U).
\]
Therefore, by \eqref{eq:24-overlap-fubini} and \eqref{eq:24-overlap}, we obtain
\begin{align*}
\int N_>(U)\,d\mathcal L_{m(n-1)}(U)&\le \frac{1}{\delta\ell_M(\eta)^m}\cdot\int \mathcal O_M(U)\,d\mathcal L_{m(n-1)}(U)\\
&\ll \frac{o_\eta(S_M(\eta))}{\delta\ell_M(\eta)^m}\stackrel{\eqref{eq:24-mass}}{\asymp}o_\eta\left(\frac{M^{m+n}}{\delta\log M}\right).
\end{align*}
Hence, by taking $M=M(\eta, \delta, L, \xi)$ sufficiently large,
Markov's inequality is applied to show the set
\[
\mathcal X_3(\xi):=\left\{U\in\Mat_{m,n-1}(\T): N_>(U)>\frac{c_2L^mM^{m+n}}{8\log M}\right\}
\]
has measure at most $\xi$, which is precisely the desired exceptional set.
\end{proof}

\begin{remark}
Again, the exceptional set depends only on the side length of a cube, not on its location.
\end{remark}

\subsection{Proof of Proposition~\ref{p:2-tran}}

This follows directly from Propositions \ref{p:2-low}, \ref{p:2-int}, \ref{p:2-sep} and \ref{p:2-large}.

\section{Completion of the proof: $n\ge 2$}

The proof is similar to $n=1$ case. The only difference is that at every stage during the Cantor subset construction, we need discard a set of fibers $U\in \Mat_{m(n-1)}(\mathbb{T})$ of an arbitrarily small measure. This ensures that there is still a set $\mathcal U_*$ of positive measure of fibers  such that for each $U\in \mathcal U_*$, Proposition \ref{p:2-tran} is valid in every stage of the Cantor subset construction.

Fix $\eta_j=1-2^{-j-1}$ for every $j\ge 0$. Recall that $\Omega$ has positive measure. Let
\begin{equation*}
\tau_j=2^{-j-3}\mathcal L_{m(n-1)}(\Omega),\quad j\ge1.
\end{equation*}
Then, one has
\begin{equation*}
3\sum_{j=1}^{\infty}\tau_j=\frac38\, \mathcal L_{m(n-1)}(\Omega).
\end{equation*}

By increasing $T$ once more if necessary, we may assume, in addition to \eqref{eq:2-no-solu}, that $\T^m$ is $(T,\eta_0,\delta)$-good with respect to every $U\in\Omega.$ This follows from Lemma~\ref{l:2-initial} because of the independence of $T$ on $J$. Let $T_0=T$ and $L_0=1.$ We now choose regular scales $M_1<M_2<\cdots$, recursively.

Suppose that $T_{j-1}$ and $L_{j-1}$ have already been defined. Having fixed $\eta_j$, choose a regular scale $M_j$ sufficiently large so that Proposition~\ref{p:2-tran} applies with
$T=T_{j-1}$, $\eta_0=\eta_{j-1}$, $\eta=\eta_j$, $L=L_{j-1}$ and $\xi=\tau_j$.
Finally, define
\begin{equation*}
T_j=M_j,\quad L_j=\ell_{M_j}(\eta_j).\label{eq:stage-T-L}
\end{equation*}

For each $j\ge1$, let $\mathcal X_j\subset\Omega$ be the exceptional set supplied by Proposition~\ref{p:2-tran} with parameters
$T=T_{j-1}$, $\eta_0=\eta_{j-1}$, $\eta=\eta_j$, $L=L_{j-1}$, $M=M_j$ and $\xi=\tau_j$. Then, we have that
\begin{equation*}
\mathcal L_{m(n-1)}(\mathcal X_j)\le3\tau_j.
\end{equation*}

Define
\begin{equation*}
\mathcal U_*:=\Omega\setminus\bigcup_{j=1}^{\infty}\mathcal X_j.
\end{equation*} which is clearly of positive $\mathcal{L}_{m(n-1)}$ measure. Thus, for every $U\in\mathcal U_*$ and every $j\ge1$, the conclusion of Proposition~\ref{p:2-tran} is available at stage $j$. Moreover, since the exceptional set in that lemma is independent of the position of the parent cube, the same $U$ works simultaneously for all $(T_{j-1},\eta_{j-1},\delta)$-good parent cubes of side length $L_{j-1}$.

Fix $U\in\mathcal U_*.$ The following construction is identically same as Section \ref{s:1-proof}. Starting with
\[\mathcal F_0(U)=\{\T^m\}.\]
By the choice of $T$, the initial cube $\T^m$ is $(T_0,\eta_0,\delta)$-good with respect to $U$.

Suppose that $\mathcal F_{j-1}(U)$ has been constructed, and that every $P\in\mathcal F_{j-1}(U)$ has side length $L_{j-1}$ and is $(T_{j-1},\eta_{j-1},\delta)$-good with respect to $U$. Since $U\notin\mathcal X_j$, Proposition~\ref{p:2-tran} applies simultaneously to every such parent $P$. Then inside each $P$ there are at least
\begin{equation*}
\frac{c_2L_{j-1}^mM_j^{m+n}}{2\log M_j}:=b_j
\end{equation*}
exact cubes of the form
\[
A_{\eta_j}(\mathbf u,\mathbf p,q;U),\quad (\mathbf u,q)\in\mathcal Q_{M_j}.
\]
For each $P\in\mathcal F_{j-1}(U)$, let $\mathcal C_j(P)$ denote the family of the $b_j$ retained exact cubes, and set
\[
\mathcal F_j(U)=\bigcup_{P\in\mathcal F_{j-1}(U)}\mathcal C_j(P).
\]

Therefore, for any $U\in\mathcal U_*$, one can define
\begin{equation*}
K_U=\bigcap_{j=0}^{\infty}\bigcup_{Q\in\mathcal F_j(U)}Q.
\end{equation*}

\begin{proposition} For every $U\in\mathcal U_*$,
\begin{equation*}
\{U\}\times K_U\subset\Exact_{m,n}(\psi,\boldsymbol\theta).
\end{equation*}
\end{proposition}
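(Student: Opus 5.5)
We mirror the argument of Proposition~\ref{p:1-subset}. Fix $U\in\mathcal U_*$ and $\mathbf z\in K_U$, and put $A=(U,\mathbf z)$. We check the two halves of the definition of $\Exact_{m,n}(\psi,\boldsymbol\theta)$ separately.

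\emph{The $\psi$-approximability half.} For each $j\ge1$ the point $\mathbf z$ lies in some retained cube $A_{\eta_j}(\mathbf u_j,\mathbf p_j,q_j;U)$ with $\mathbf a_j=(\mathbf u_j,q_j)\in\mathcal Q_{M_j}$. By the inequality recorded right after the definition of fiber exact cubes,
\[
|A\mathbf a_j-\mathbf p_j-\boldsymbol\theta|=|U\mathbf u_j+q_j\mathbf z-\mathbf p_j-\boldsymbol\theta|<\psi(q_j)=\psi(|\mathbf a_j|).
\]
Since $|\mathbf a_j|=q_j\ge M_j/2\to\infty$, the vectors $\mathbf a_j$ are eventually pairwise distinct. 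Hence there are infinitely many pairs $(\mathbf p,\mathbf q)$, and so $A\in W_{m,n}(\psi,\boldsymbol\theta)$.

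\emph{The non-$c\psi$-approximability half.} Fix $c\in(0,1)$ and choose $j_0$ with $\eta_{j-1}>c$ for all $j> j_0$. Take any $\mathbf b=(\mathbf v,s)\in\Z^n\setminus\{\mathbf 0\}$ with $|\mathbf b|>T_{j_0}$ and any $\mathbf r\in\Z^m$. We split into two cases according to $s$.

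If $s=0$, then $A\mathbf b=U\mathbf v$. Since $U\in\Omega$, the inequality \eqref{eq:2-no-solu} has no solution with $|\mathbf v|>T$, and $T_{j_0}\ge T_0=T$. Therefore
\[
|A\mathbf b-\mathbf r-\boldsymbol\theta|\ge\psi(|\mathbf b|)>c\psi(|\mathbf b|).
\]

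If $s\ne0$, we may replace $(\mathbf b,\mathbf r)$ by $(-\mathbf b,-\mathbf r)$ and assume $s>0$ where needed; the dangerous cubes \eqref{eq:2-dan-def} are defined with $|s|$, so this is harmless. Since $T_j=M_j\to\infty$, there is a unique $j>j_0$ with $T_{j-1}<|\mathbf b|\le T_j$. The point $\mathbf z$ lies in some cube of $\mathcal F_j(U)$. By Proposition~\ref{p:2-tran}(a), applied at stage $j$ with $T=T_{j-1}$, $\eta_0=\eta_{j-1}$ and $M=M_j=T_j$, that cube is disjoint from $D_{\eta_{j-1}}(\mathbf b,\mathbf r;U)$. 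Hence
\[
\Big|\mathbf z-\frac{\mathbf r+\boldsymbol\theta-U\mathbf v}{s}\Big|\ge\eta_{j-1}\frac{\psi(|\mathbf b|)}{|s|},
\]
so multiplying through by $|s|$ gives $|A\mathbf b-\mathbf r-\boldsymbol\theta|\ge\eta_{j-1}\psi(|\mathbf b|)>c\psi(|\mathbf b|)$. This holds for every $\mathbf r$.

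\emph{Conclusion.} All solutions of $|A\mathbf q-\mathbf p-\boldsymbol\theta|<c\psi(|\mathbf q|)$ therefore have $|\mathbf q|\le T_{j_0}$. There are finitely many such $\mathbf q$, and for each of them only finitely many $\mathbf p$ give a bounded value of $|A\mathbf q-\mathbf p-\boldsymbol\theta|$. So there are finitely many pairs in total, and $A\notin W_{m,n}(c\psi,\boldsymbol\theta)$.

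\emph{Main bookkeeping point.} The step needing care is that the stage-$j$ retained cubes really were chosen to avoid dangerous cubes over the full height range $T_{j-1}<|\mathbf b|\le M_j$. This is exactly Proposition~\ref{p:2-tran}(a), which rests on two facts. First, Propositions~\ref{p:2-low} and~\ref{p:2-int} remove the cubes meeting dangerous cubes of heights $T_{j-1}<|\mathbf b|<M_j^a$ and $M_j^a\le|\mathbf b|\le M_j$. Second, it is valid simultaneously for every good parent, because $U\notin\mathcal X_j$. The goodness of the parents, which Proposition~\ref{p:2-low} needs, is propagated by part (c) at the previous stage. The case $s=0$, which Proposition~\ref{p:2-tran} does not address, is handled separately by the restriction $U\in\Omega$ coming from Lemma~\ref{l:2-horizontal}.
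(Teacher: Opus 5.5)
Your proof is correct and is essentially the paper's argument (the proof of Proposition~\ref{p:1-subset} transported to fibers via Proposition~\ref{p:2-tran}(a)), with the one genuinely new case, the hyperplanes with $s=0$, correctly handled by $U\in\Omega$ and $T_{j_0}\ge T_0=T$. One small correction: the sign flip $(\mathbf b,\mathbf r)\mapsto(-\mathbf b,-\mathbf r)$ is not harmless when $\boldsymbol\theta\neq\mathbf 0$, since it turns $|A\mathbf b-\mathbf r-\boldsymbol\theta|$ into $|A\mathbf b-\mathbf r+\boldsymbol\theta|$. You never actually need it, because Proposition~\ref{p:2-tran}(a) covers both signs of $s$ and multiplying the disjointness inequality by $|s|$ works directly for either sign.
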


\begin{proof}
The proof is identically same as the proof of Proposition \ref{p:1-subset} with obvious changes.
\end{proof}

The Hausdorff dimension estimate for $K_U$ is obtained in exactly the same way as in Section \ref{s:1-proof}. It suffices to substitute the corresponding parameters $b_j$, $L_j$ and
 $$r_j\asymp{M_j^{\frac{n}{m}-\rho}(\log M_{j})^{-\frac{1}{m}}}.$$
 into Lemma \ref{l:cantor}. With verbatim argument as in Section~\ref{s:1-proof}, one concludes
\begin{equation*}
\hdim K_U \ge \frac{m+n}{1+\tau_\psi+2\varepsilon}.
\end{equation*}
The slicing lemma (see Lemma \ref{l:2-slicing}) then gives the final conclusion
\[
\hdim \Exact_{m,n}(\psi,\boldsymbol\theta)\ge m(n-1)+\frac{m+n}{1+\tau_\psi}.
\]

\end{document}